\definecolor{lightgrey}{rgb}{0.9,0.9,0.9}
\newtheorem{example}{Example}
\newtheorem{theorem}{Theorem}
\newtheorem{lemma}[theorem]{Lemma}
\newtheorem{proposition}[theorem]{Proposition}
\newtheorem{remark}[theorem]{Remark}
\newtheorem{corollary}[theorem]{Corollary}
\newtheorem{definition}[theorem]{Definition}
\newcommand*\ie{\textit{i.e.}\@ifnextchar.{\@gobble}{\relax}}
\newcommand*\vs{\textit{vs.}\@ifnextchar.{\@gobble}{\relax}}
\newcommand*\etc{\textit{etc.}\@ifnextchar.{\@gobble}{\relax}}
\newcommand*\eg{\textit{e.g.}\@ifnextchar.{\@gobble}{\relax}}
\newcommand*\cf{\textit{cf.}\@ifnextchar.{\@gobble}{\relax}}
\newcommand{\norm}[1]{\| #1 \|} 
\newcommand{\abs}[1]{\lvert #1 \rvert}
\NewDocumentCommand{\infnorm}{s O{} m }{%
  \IfBooleanTF{#1}{\norm*{#3}}{\norm[#2]{#3}}_{\infty}%
}
\NewDocumentCommand{\twonorm}{s O{} m }{%
  \IfBooleanTF{#1}{\norm*{#3}}{\norm[#2]{#3}}_2%
}
\NewDocumentCommand{\tvnorm}{s O{} m }{%
  \IfBooleanTF{#1}{\norm*{#3}}{\norm[#2]{#3}}_{\textup{TV}}%
}
\NewDocumentCommand{\onenorm}{s O{} m }{%
  \IfBooleanTF{#1}{\norm*{#3}}{\norm[#2]{#3}}_1%
}
\NewDocumentCommand{\frobnorm}{s O{} m }{%
  \IfBooleanTF{#1}{\norm*{#3}}{\norm[#2]{#3}}_F%
}
\NewDocumentCommand{\scalar}{s O{} >{\SplitArgument{1}{,}}m}{%
    \IfBooleanTF{#1}{\scalaraux*#3}{\scalaraux[#2]#3}%
}
\DeclarePairedDelimiterX{\scalaraux}[2]{\langle}{\rangle}{#1, #2}
\newcommand*\circledaux[1]{\tikz[baseline=(char.base)]{
    \node[shape=circle,draw,inner sep=0.8pt] (char) {#1};}}
\NewDocumentCommand{\circled}{m o }{%
    \IfNoValueTF{#2}{\circledaux{#1}}{\stackrel{\circledaux{#1}}{#2}}%
}
\newcommand{\defi}{\stackrel{\mathrm{\scriptscriptstyle def}}{=}}
\renewcommand*\R{\mathbb{R}}
\let\epsilon\varepsilon
\def\dodeffunction#1:#2->#3;#4\relax
\relax\end{aligned}}}
\def\dodeffunctionaux#1->#2\relax{#1&\mapsto#2}
\newcommand\dual[1]{#1^\ast}
\NewDocumentCommand{\enorm}{s O{} m}{%
    \IfBooleanTF{#1}{\norm*{#3}}{\norm[#2]{#3}}_{\E}%
}
\NewDocumentCommand{\denorm}{s O{} m}{%
    \dual{\IfBooleanTF{#1}{\enorm*{#3}}{\enorm[#2]{#3}}}%
}
\NewDocumentCommand{\commasaux}{m m}{%
    \IfNoValueTF{#2}{#1}{#1, #2}%
}
\NewDocumentCommand{\prodaux}{m m}{%
    \IfNoValueTF{#2}{#1 \times #1}{#1 \times #2}%
}
\renewcommand{\E}[1]{\mathbb{E}\left[ #1 \right]} 
\renewcommand{\L}{\mathscr{L}}
\newcommand{\varl}{\operatorname{left}} 
\newcommand{\varr}{\operatorname{right}}
\DeclareMathOperator*{\argmin}{arg\,min}
\let\svthefootnote\thefootnote
\newcommand\freefootnote[1]{%
  \let\thefootnote\relax%
  \footnotetext{#1}%
  \let\thefootnote\svthefootnote%
}
\renewcommand\paragraph{\@startsection{paragraph}{4}{\z@}%
                                    {0ex \@plus0.5ex \@minus.2ex}%
                                    {-1em}%
                                    {\normalfont\normalsize\bfseries}}
\newcounter{algorithmicH}
\let\oldalgorithmic\algorithmic
\renewcommand{\algorithmic}{%
  \stepcounter{algorithmicH}
  \oldalgorithmic}
\renewcommand{\theHALG@line}{ALG@line.\thealgorithmicH.\arabic{ALG@line}}
\algnewcommand{\lst}{\texttt{lst}}
\algnewcommand{\slst}{\texttt{slst}}
\algnewcommand{\SEND}{\textbf{send}}
\newsavebox{\algleft}
\newsavebox{\algright}
\newcommand{\innp}[1]{\langle #1 \rangle}
\newcommand{\bigo}[1]{O( #1 )}
\newcommand{\absadj}[1]{\left| #1 \right|} 
\DeclareMathOperator*{\arctanh}{arctanh}
\DeclareMathOperator*{\arccosh}{arccosh}
\renewcommand{\dot}[1]{\langle #1\rangle} 
\let\dott\temp
\newtheorem{fact}[theorem]{Fact}
\title[Global Riemannian Acceleration in Hyperbolic and Spherical Spaces]{Global Riemannian Acceleration in Hyperbolic and Spherical Spaces}
\newcommand\newlink[2]{\hyperlink{#1}{\color{black}#2}}
\newcommand\newtarget[2]{\Hy@raisedlink{\hypertarget{#1}{}}#2}
\newcommand{\zeros}{\mathbb{0}} 
\newcommand\linktoproof[1]{{\normalfont[{\hyperlink{proof:#1}{$\downarrow$}}]}}
\newcommand\linkofproof[1]{\newtarget{proof:#1}{}}
\newcommandx*\ecanonical[1][1=i, usedefault]{\newlink{def:e_i_canonical_basis}{e_{#1}}}
\newcommand{\AGD}{\newlink{def:acronym_accelerated_gradient_descent}{AGD}}
\newcommand{\RGD}{\newlink{def:acronym_riemannian_gradient_descent}{RGD}}
\newcommandx*\gamman[1][1=, usedefault]{\newlink{def:gamma_n}{\gamma_{\operatorname{n}}^{#1}}}
\newcommandx*\gammap[1][1=, usedefault]{\newlink{def:gamma_p}{\gamma_{\operatorname{p}}^{#1}}}
\newcommandx*\gammanparam[1][1=, usedefault]{\newlink{def:gamma_n_parameter}{\gamma_{\operatorname{n}}^{#1}}}
\newcommandx*\gammapparam[1][1=, usedefault]{\newlink{def:gamma_p_parameter}{\gamma_{\operatorname{p}}^{#1}}}
\newcommand{\bigotilde}[1]{\newlink{def:big_o_tilde}{\widetilde{O}}( #1 )}
\newcommand{\bigotildel}[1]{\newlink{def:big_o_tilde}{\widetilde{O}}\left( #1 \right)}
\let\oldmu\mu
\renewcommand\mu{\newlink{def:strong_g_convexity_of_F}{\oldmu}}
\newcommand\mui[1]{\newlink{def:regularization_in_reduction}{\oldmu_{#1}}}
\renewcommand\M{\newlink{def:M}{\mathcal{M}}}
\newcommand\NN{\mathcal{N}} 
\newcommand\MEucl{\newlink{def:M_Euclidean}{M}}
\newcommandx*\Mk[1][1=K, usedefault]{\newlink{def:M_K}{\mathcal{M}_{#1}}}
\let\oldepsilon\epsilon
\renewcommand{\epsilon}{\newlink{def:accuracy_epsilon}{\oldepsilon}}
\newcommandx*\hatepsilon[1][1=i, usedefault]{\newlink{def:accuracy_of_binary_search}{\hat{\oldepsilon}_{#1}}}
\newcommand\expon[1]{\newlink{def:riemannian_exponential_map}{\operatorname{Exp}_{#1}}} 
\newcommand\exponinv[1]{\newlink{def:riemannian_exponential_map}{\operatorname{Exp}_{#1}^{-1}}} 
\newcommand\T{\newlink{def:total_number_of_iterations_T}{T}}
\newcommand{\n}{\newlink{def:dimension}{n}} 
\newcommand{\Lips}{\newlink{def:Lipschitz_constant}{L_{\mathtt{p}}}} 
\newcommand{\Ltildegeneric}{\newlink{def:generic_smoothness_in_euclidean_case}{\tilde{L}}} 
\newcommand{\Ltilde}{\newlink{def:smoothness_tilde_i_e_euclidean_coming_from_riemannian}{\tilde{L}}} 
\renewcommand{\L}{\newlink{def:riemannian_smoothness_of_F}{L}} 
\newcommand{\X}{\newlink{def:euclidean_ball_where_we_optimize}{\mathcal{X}}}
\newcommandx*\BR[1][1=R, usedefault]{\newlink{def:riemannian_ball_where_we_optimize}{\mathcal{B}_{#1}}}
\renewcommand{\time}{\newlink{def:time_alg_of_strongly_convex_for_reduction}{\operatorname{Time}}}
\newcommand{\timens}{\newlink{def:time_alg_of_just_convex_for_reduction}{\operatorname{Time}_{\operatorname{ns}}}}
\newcommand{\Alg}{\newlink{def:alg_of_strongly_convex_for_reduction}{\mathcal{A}}}
\newcommand{\Algns}{\newlink{def:alg_of_just_convex_for_reduction}{\mathcal{A}_{\operatorname{ns}}}}
\newcommand{\Q}{\newlink{def:convex_set_of_tilted_convex_thm}{Q}}
\newcommand{\riemMinus}{\newlink{def:formal_riemannian_subtraction}{-}}
\newcommand{\dist}{\newlink{def:distance}{d}}
\let\oldpsi\psi
\renewcommand{\psi}{\newlink{def:strongly_convex_regularizer}{\oldpsi}}
\newcommand{\astfenchel}{\newlink{def:asterisk_of_fenchel_dual}{\ast}}
\let\oldPi\Pi
\renewcommand{\Pi}{\newlink{def:euclidean_projection}{\oldPi}}
\renewcommand{\K}{\newlink{def:curvature}{K}} 
\newcommand{\deltaAux}{\newlink{def:auxiliary_angle_delta}{\delta}} 
\newcommand{\deltatildeAux}{\newlink{def:auxiliary_angle_delta}{\tilde{\delta}}} 
\newcommand{\deltapAux}{\newlink{def:auxiliary_delta_prime}{\delta'}} 
\newcommand{\deltaptildeAux}{\newlink{def:auxiliary_delta_prime}{\tilde{\delta}'}} 
\newcommand{\deltaastAux}{\newlink{def:auxiliary_delta_ast}{\delta^\ast}} 
\newcommand{\deltaasttildeAux}{\newlink{def:auxiliary_delta_ast}{\tilde{\delta}^\ast}}
\newcommandx*\sk[2][1= , 2= , usedefault]{\newlink{def:special_sine}{\operatorname{S}_{\K}^{#2}}( #1 )}
\newcommandx*\ck[2][1= , 2= , usedefault]{\newlink{def:special_cosine}{\operatorname{C}_{\K}^{#2}}( #1 )}
\newcommandx*\cotk[2][1= , 2= , usedefault]{\newlink{def:special_cotangent}{\operatorname{cot}_{\K}^{#2}}( #1 )}
\newcommand\sign{\newlink{def:sign_of_a_number}{\operatorname{sign}}}
\newcommandx*\distorp[1][1=R, usedefault]{\newlink{def:distortion_K_pos}{\mathcal{K}_{#1}^+}}
\newcommandx*\distorn[1][1=R, usedefault]{\newlink{def:distortion_K_neg}{\mathcal{K}_{#1}^-}}
\newcommand{\stcvxpsi}{\newlink{def:strong_convexity_of_regularizer}{\sigma}}
\newcommand{\h}{\newlink{def:geodesic_map}{h}}
\newcommand{\affLowerBound}{\newlink{def:affine_lower_bound}{\ell}}
\newcommand\lambdanot[1]{\newlink{def:lambda_superscript_notation}{#1}}
\newcommandx*\Ei[1][1=i, usedefault]{\newlink{def:discretization_error}{E_{#1}}}
\newcommandx*\Di[1][1=i, usedefault]{\newlink{def:gap_for_reduction_tost_g_convex}{D_{#1}}}
\newcommandx*\FReg[1][1=i, usedefault]{\newlink{def:regularized_F_for_reduction}{F^{(#1)}}}
\newcommand\F{\newlink{def:riemannian_function_F}{F}}
\newcommand\f{\newlink{def:tilted_convex_function_f_from_F}{f}}
\newcommand\ftilted{\newlink{def:tilted_convex_function_generic_euclidean}{f}}
\newcommand\Rtilde{\newlink{def:radius_of_Euclidean_ball}{\tilde{R}}}
\newcommand\RR{\newlink{def:radius_of_Riemannian_ball}{R}}
\newcommandx*\Rglobal[1][1=, usedefault]{\newlink{def:distance_to_global_minimizer}{R_{\operatorname{g}}^{#1}}}
\newcommandx*\Rglobaltilde[1][1=, usedefault]{\newlink{def:distance_to_global_minimizer}{\tilde{R}_{\operatorname{g}}^{#1}}}
\newcommand\ball{\newlink{def:closed_ball}{\bar{B}}}
\newcommandx*\hatgamma[1][1=i, usedefault]{\newlink{def:tilted_parameter_at_iteration_i}{\hat{\gamma}_{#1}}}
\newcommandx*\Gammahat[2][1=i, 2= , usedefault]{\newlink{def:function_for_tilted_parameter}{\hat{\Gamma}_{#1}^{#2}}}
\newcommandx*\Gt[1][1=t, usedefault]{\newlink{def:differentiable_gap_bound}{G_{#1}}}
\newcommandx*\Lt[1][1=t, usedefault]{\newlink{def:differentiable_lower_bound}{L_{#1}}}
\newcommandx*\Ut[1][1=t, usedefault]{\newlink{def:differentiable_upper_bound}{U_{#1}}}
\newcommandx*\Gbinsearch[1][1=i, usedefault]{\newlink{def:G_binary_search}{\mathcal{G}_{#1}}}
\newcommandx*\At[2][1=t, 2=, usedefault]{\newlink{def:integral_of_steps}{A_{#1}^{#2}}}
\newcommandx*\ai[2][1=t, 2=, usedefault]{\newlink{def:discrete_step}{a_{#1}^{#2}}}
\newcommandx*\alphat[1][1=, usedefault]{\newlink{def:step_function_alpha}{\alpha_{#1}}}
\newcommandx*\breg[1][1=\psi, usedefault]{\newlink{def:bregman_divergence}{D_{#1}}}
\newcommand\Tansp[1]{\newlink{def:tangent_space}{T_{#1}}}
\newcommand\notilde{\noop}
\NewDocumentCommand{\xI}{ooo}{%
    \newlink{def:iterate_x}{
        \IfNoValueTF{#2}{
        \IfNoValueTF{#3}
            {\tilde{x}_{#1}}
            {x_{#1}}
        }
        {
        \IfNoValueTF{#3}
            {\tilde{x}_{#1}^{#2}}
            {x_{#1}^{#2}}
        }
    }
}
\NewDocumentCommand{\zi}{ooo}{%
    \newlink{def:iterate_z}{
        \IfNoValueTF{#2}{
        \IfNoValueTF{#3}
            {\tilde{z}_{#1}}
            {z_{#1}}
        }
        {
        \IfNoValueTF{#3}
            {\tilde{z}_{#1}^{#2}}
            {z_{#1}^{#2}}
        }
    }
}
\NewDocumentCommand{\chii}{ooo}{%
    \newlink{def:iterate_chi}{
        \IfNoValueTF{#2}{
        \IfNoValueTF{#3}
            {\tilde{\chi}_{#1}}
            {\chi_{#1}}
        }
        {
        \IfNoValueTF{#3}
            {\tilde{\chi}_{#1}^{#2}}
            {\chi_{#1}^{#2}}
        }
    }
}
\NewDocumentCommand{\zetai}{ooo}{%
    \newlink{def:iterate_zeta}{
        \IfNoValueTF{#2}{
        \IfNoValueTF{#3}
            {\tilde{\zeta}_{#1}}
            {\zeta_{#1}}
        }
        {
        \IfNoValueTF{#3}
            {\tilde{\zeta}_{#1}^{#2}}
            {\zeta_{#1}^{#2}}
        }
    }
}
\NewDocumentCommand{\xt}{ooo}{%
    \newlink{def:iterate_x_continuous}{
        \IfNoValueTF{#2}{
        \IfNoValueTF{#3}
            {\tilde{x}_{#1}}
            {x_{#1}}
        }
        {
        \IfNoValueTF{#3}
            {\tilde{x}_{#1}^{#2}}
            {x_{#1}^{#2}}
        }
    }
}
\NewDocumentCommand{\zt}{ooo}{%
    \newlink{def:iterate_z_continuous}{
        \IfNoValueTF{#2}{
        \IfNoValueTF{#3}
            {\tilde{z}_{#1}}
            {z_{#1}}
        }
        {
        \IfNoValueTF{#3}
            {\tilde{z}_{#1}^{#2}}
            {z_{#1}^{#2}}
        }
    }
}
\NewDocumentCommand{\xInit}{o}{%
    \newlink{def:initial_point}{
        \IfNoValueTF{#1}
            {\tilde{x}_0}
            {x_0}
    }
}
\NewDocumentCommand{\xast}{o}{%
    \newlink{def:optimizer}{
        \IfNoValueTF{#1}
            {\tilde{x}^\ast}
            {x^\ast}
    }
}
\newcommand\xastg{\newlink{def:global_optimizer}{x_g^\ast}}
\newcommand\xastgtilde{\newlink{def:global_optimizer}{\tilde{x}_g^\ast}}
\newcommand\deltar{\newlink{def:delta_without_subindex_which_means_zeta_RR}{\delta}}
\newcommand\zetar{\newlink{def:zeta_without_subindex_which_means_zeta_RR}{\zeta}}
\newcommand\TT{\newlink{def:total_number_of_iterations_ball_boosting}{T}}
\newcommand{\epsilonp}{\newlink{def:accuracy_epsilon_prime_in_ball_subproblem}{\oldepsilon'}}
\newcommandx*\pk[1][1=k, usedefault]{\newlink{def:iterates_p_boosting_alg}{p_{#1}}}
\newcommandx*\pkast[1][1=k, usedefault]{\newlink{def:optimizers_in_balls_in_boosting_alg}{p_{#1}^\ast}}
\newcommand{\algsc}{\newlink{def:algorithm_1_strongly_convex_version}{\operatorname{AlgSC}}}
\begin{document}

\maketitle

\begin{abstract}
    We further research on the accelerated optimization phenomenon on Riemannian manifolds by introducing accelerated global first-order methods for the optimization of $\L$-smooth and geodesically convex (g-convex) or $\mu$-strongly g-convex functions defined on the hyperbolic space or a subset of the sphere. For a manifold other than the Euclidean space, these are the first methods to \emph{globally} achieve the same rates as accelerated gradient descent in the Euclidean space with respect to $\L$ and $\epsilon$ (and $\mu$ if it applies), up to log factors. Due to the geometric deformations, our rates have an extra factor, depending on the initial distance $\RR$ to a minimizer and the curvature $\K$, with respect to Euclidean accelerated algorithms.\footnote{V5 of this work significantly reduces the dependence of our convergence rates on $\RR$ and $\K$, with respect to V4, which is \href{https://proceedings.mlr.press/v167/martinez-rubio22a.html}{\normalcolor the ALT22 version} (e.g., for strongly g-convex problems in the hyperbolic case, from exponential to a small polynomial).
    In V3, we discussed lower bounds and added a proof about the minimum possible condition number of strongly g-convex functions. V2 and V4 improve writing. V1 was made available on Dec 7, 2020.}

    As a proxy for our solution, we solve a constrained non-convex Euclidean problem, under a condition between convexity and \textit{quasar-convexity}, of independent interest. Additionally, for any Riemannian manifold of bounded sectional curvature, we provide reductions from optimization methods for smooth and g-convex functions to methods for smooth and strongly g-convex functions and vice versa. We also reduce global optimization to optimization over bounded balls where the effect of the curvature is reduced.
\end{abstract}

\section{Introduction}

\freefootnote{Most of the notations in this work have a link to their definitions. For example, if you click or tap on any instance of $\L$, you will jump to the place where it is defined as the smoothness constant of the function we consider in this work.}

Acceleration in convex optimization is a phenomenon that has drawn plenty of attention and has yielded many important results, since the renowned Accelerated Gradient Descent (\newtarget{def:acronym_accelerated_gradient_descent}{\AGD{}}) method of \citet{nesterov1983method}. Having been proved successful for deep learning \citep{DBLP:conf/icml/SutskeverMDH13}, among other fields, there have been recent efforts to better understand this phenomenon \citep{allen2014linear,diakonikolas2017approximate,su2014differential,wibisono2016variational}. These have yielded numerous new results going beyond convexity or the standard oracle model, in a wide variety of settings \citep{allen2016katyusha, allen2017natasha, allen2018katyusha, DBLP:conf/stoc/ZhuO15,allen2016even, allen2017much, carmon2017convex, cohen2018acceleration, cutkosky2019matrix, diakonikolas2019generalized, diakonikolas2017accelerated, DBLP:conf/colt/GasnikovDGVSU0W19,wang2015unified}. This surge of research that applies tools of convex optimization to models going beyond convexity has been fruitful.  One of these models is the setting of geoesically convex Riemannian optimization. In this setting, the function to optimize is geodesically convex (g-convex), i.e., convex restricted to any geodesic (cf. \cref{def:g-convex_smooth}). 

Riemannian optimization, g-convex and non-g-convex alike, is an extensive area of research. In recent years there have been numerous efforts towards obtaining Riemannian optimization algorithms that share analogous properties to the more broadly studied Euclidean first-order methods: deterministic \citep{bento2017iteration,wei2016guarantees,zhang2016first}, stochastic \citep{hosseini2019alternative,khuzani2017stochastic,tripuraneni2018averaging}, saddle-point-escaping \citep{criscitiello2019efficiently,sun2019escaping,zhang2018r,zhou2019faster, criscitiello2020accelerated}, variance-reduced \citep{sato2017riemannian,kasai2018riemannian,zhang2016fast}, adaptive \citep{kasai2019riemannian}, and projection-free methods \citep{weber2017frank,weber2019nonconvex}, among others. Unsurprisingly, Riemannian optimization has found many applications in machine learning, including low-rank matrix completion \citep{DBLP:journals/siamsc/CambierA16,heidel2018riemannian,mishra2014r3mc,tan2014riemannian,vandereycken2013low}, dictionary learning \citep{cherian2016riemannian,sun2016complete}, optimization under orthogonality constraints \citep{edelman1998geometry}, with applications to Recurrent Neural Networks \citep{DBLP:conf/nips/Casado19, DBLP:conf/icml/CasadoM19}, robust covariance estimation in Gaussian distributions \citep{wiesel2012geodesic}, Gaussian mixture models \citep{hosseini2015matrix}, operator scaling \citep{allen2018operator}, and sparse principal component analysis \citep{genicot2015weakly,huang2019riemannian,jolliffe2003modified}.

However, the acceleration phenomenon, largely celebrated in the Euclidean space, is still not understood in Riemannian manifolds, although there has been some progress on this topic recently (cf. \hyperlink{sec:related_work}{Related work}). This poses the following question, which is the central subject of this paper:

\begin{center}
\textit{Can a Riemannian first-order method enjoy the same rates as \AGD{} does in the Euclidean space?}
\end{center}

In this work, we provide an answer in the affirmative for functions defined on hyperbolic and spherical spaces, up to constants depending on the sectional curvature $\K$ and the initial distance to a minimizer $\RR$, and up to log factors. We summarize our main results in the following:
\begin{itemize}
    \item \textit{Full acceleration}. We design algorithms that provably obtain an $\newtarget{def:accuracy_epsilon}{\epsilon}$ with the same rates of convergence as \AGD{} in the Euclidean space, up to constants and log factors, cf. \cref{table:comparisons:riemannian}. Previous accelerated approaches only showed local results \citep{zhang2018towards} or obtained results with rates in between the ones obtainable by Riemannian Gradient Descent (\newtarget{def:acronym_riemannian_gradient_descent}{\RGD{}}) and \AGD{} \citep{ahn2020nesterov}. Moreover, these previous works only apply to functions that are smooth and strongly g-convex and not to smooth functions that are only g-convex. As a proxy, we design an accelerated algorithm under a condition between convexity and \textit{quasar-convexity} in the constrained setting, of independent interest.

    \item \textit{Reductions}. We present reductions for any Riemannian manifold of bounded sectional curvature. Given an optimization method for smooth and g-convex functions they provide a method for optimizing smooth and strongly g-convex functions, and vice versa. We also reduce global optimization to sequential optimization in constant-diameter Riemannian balls.
\end{itemize}

It is often the case that methods and key geometric inequalities that apply to manifolds with bounded sectional curvatures are obtained from the ones existing for the spaces of constant extremal sectional curvature \citep{grove1997comparison, zhang2016first, zhang2018towards}. Consequently, our contribution is relevant not only because we establish an algorithm achieving global acceleration on functions defined on a manifold other than the Euclidean space, but also because understanding the constant sectional curvature case is an important step towards understanding the more general case of obtaining algorithms that optimize g-convex functions, strongly or not, defined on manifolds of bounded sectional curvature.

\paragraph{Structure of the paper.} We provide some definitions, notations, and related work in the rest of this section. We introduce our algorithms and their ideas and a proof sketch in \cref{sec:algorithm} and we present our reductions in \cref{sec:reductions}. \cref{app:acceleration} contains the convergence proofs of the accelerated algorithms. \cref{app:reductions} contains the proofs of the reductions and the corollaries showing how to apply them to our algorithms. In \cref{app:geometric_results}, we prove our geometric lemmas that show how to reduce our Riemannian problem to the Euclidean non-convex problem that we solve in an accelerated way. In \cref{app:constants} we comment on the constants of our algorithms, on rates of related work and on hardness results. 

\paragraph{Basic Geometric Definitions.} We recall basic definitions of Riemannian geometry that we use in this work. For a thorough introduction we refer to \citep{petersen2006riemannian}. A Riemannian manifold $(\M,\mathfrak{g})$ is a real smooth manifold $\M$ equipped with a metric $\mathfrak{g}$, which is a smoothly varying inner product. For $x \in \M$ and any two vectors $v,w  \in \newtarget{def:tangent_space}{\Tansp{x}}\M$ in the tangent space of $\M$, the inner product $\innp{v,w}_x$ is $\mathfrak{g}(v,w)$.  For $v\in \Tansp{x}\M$, the norm is defined as usual $\norm{v}_x \defi \sqrt{\innp{v,v}_x}$. Typically, $x$ is known given $v$ or $w$, so we will just write $\innp{v,w}$ or $\norm{v}$ if $x$ is clear from context. A geodesic of length $\ell$ is a curve $\gamma : [0,\ell] \to \M$ of unit speed that is locally distance minimizing. A uniquely geodesic space is a space such that for every two points there is one and only one geodesic that joins them. In such a case the exponential map $\newtarget{def:riemannian_exponential_map}{\expon{x}} : \Tansp{x}\M\to \M$ and inverse exponential map $\exponinv{x}:\M\to \Tansp{x}\M$ are well defined for every pair of points, and are as follows. Given $x, y\in\M$, $v\in \Tansp{x}\M$, and a geodesic $\gamma$ of length $\norm{v}$ such that $\gamma(0) =x$, $\gamma(\norm{v})=y$, $\gamma'(0)=v/\norm{v}$, we have that $\expon{x}(v) = y$ and $\exponinv{x}(y) = v$. Note, however, that $\expon{x}(\cdot)$ might not be defined for each $v\in \Tansp{x}\M$. We denote by $\newtarget{def:distance}{\dist}(x,y)$ the distance between $x$ and $y$. Its value is the same as $\norm{\exponinv{x}(y)}$. Given a $2$-dimensional subspace $V \subseteq \Tansp{x}\M$, the sectional curvature at $x$ with respect to $V$ is defined as the classical notion of Gauss curvature, for the surface $\expon{x}(V)$ at $x$. The Gauss curvature at a point $x$ can be defined as the product of the maximum and minimum curvatures of the curves resulting from intersecting the surface with planes normal to the surface at $x$. 

\paragraph{Notation.}\newtarget{sec:notation}{} Let $\newtarget{def:M}{\M}$ be a $\newtarget{def:dimension}{\n}$-dimensional Riemannian manifold. Given two points $x, y\in \M$ and a vector $v\in \Tansp{x}\M$ in the tangent space of $x$, we use the formal notation $\innp{v, y\newtarget{def:formal_riemannian_subtraction}{\riemMinus} x} \defi -\innp{v, x\riemMinus y} \defi \innp{v, \exponinv{x}(y)}$. We call $\newtarget{def:riemannian_function_F}{\F}:\M\to\R$ a function we want to optimize and that has at least one global minimizer $\newtarget{def:global_optimizer}{\xastg}$. We denote by $\newtarget{def:initial_point}{\xInit[\notilde]} \in \M$ an initial point of an optimization algorithm. We use $\newtarget{def:distance_to_global_minimizer}{\Rglobal} \geq \dist(\xInit[\notilde],\xastg)$ as a bound on the initial distance to a global minimizer. We use the notation $\expon{\xInit[\notilde]}(\ball(0,\Rglobal))\subset\M$ to mean that $\M$ is such that $\expon{\xInit[\notilde]}$ is defined on the closed ball $\newtarget{def:closed_ball}{\ball}(0,\Rglobal)\subset \Tansp{\xInit[\notilde]}\M$. We denote $\newtarget{def:radius_of_Riemannian_ball}{\RR}$ the radius of a Riemannian ball $\newtarget{def:riemannian_ball_where_we_optimize}{\BR} \defi \expon{\xInit[\notilde]}(\ball(0, \RR))$ in which \cref{alg:accelerated_gconvex} will optimize. We use $\newtarget{def:M_K}{\Mk}$ to denote any manifold that is a subset of an $\n$-dimensional complete and simply connected manifold of constant sectional curvature $\newtarget{def:curvature}{\K}$, namely a subset of the hyperbolic space or sphere \citep{petersen2006riemannian}, with the inherited metric, and such that $\expon{\xInit[\notilde]}(\ball(0,\Rglobal))\subset\Mk$. 
We want to work with the standard choice of uniquely geodesic manifolds \citep{ahn2020nesterov, liu2017accelerated, zhang2016first, zhang2018towards}. Therefore, if $\K > 0$ we restrict ourselves to $\Rglobal < \pi/(2\sqrt{\K})$ and $\RR < \pi/(2\sqrt{\K})$. In such a case, $\BR$ is uniquely geodesic (it is contained in an open hemisphere). We define $\xast[\notilde] \in \argmin_{x\in\BR} \{\F(x)\}$. For $\newtarget{def:M_Euclidean}{\MEucl}\subseteq\R^{\n}$, we denote by $\newtarget{def:geodesic_map}{\h}:\M\to \MEucl$ a geodesic map \citep{kreyszig1991differential}, which is a diffeomorphism such that the image and the inverse image of a geodesic is a geodesic. For convenience, we map $\h(\xInit[\notilde])=0$. Given a point $x\in\M$ we use the notation $\tilde{x} \defi \h(x)$ and vice versa; any point in $\MEucl$ will use a tilde. Given a vector $v\in \Tansp{x}\M$, we call $\newtarget{}{\tilde{v}} \in \R^{\n}$ the vector of the same norm such that $\{\tilde{x}+\tilde{\lambda} \tilde{v}|\tilde{\lambda}\in\R^+, \tilde{x}+\tilde{\lambda} \tilde{v} \in \MEucl\} = \{\h(\expon{x}(\lambda v))|\lambda\in I\subseteq\R^+\}$, for some interval $I$. Likewise, given $x$ and a vector $\tilde{v}\in\R^{\n}$, we define $v\in \Tansp{x}\M$. In the case of $\Mk$, we call $\newtarget{def:euclidean_ball_where_we_optimize}{\X}=\h(\BR)$. The big-$O$ notation $\newtarget{def:big_o_tilde}{\bigotilde{\cdot}}$ omits $\log$ factors  and considers $\RR\sqrt{\abs{\K}}=\bigo{1}$, since the subroutines of our final \cref{alg:instance_of_riemacon} only need to use values of $\RR$ satisfying this condition. We denote the geometric constants $\newtarget{def:zeta_without_subindex_which_means_zeta_RR}{\zetar} \defi \Rglobal\sqrt{\abs{\K}} \coth(\Rglobal\sqrt{\abs{\K}})\leq   \Rglobal\sqrt{\abs{\K}}+1$ if $\K < 0$ else $1$, and $\newtarget{def:delta_without_subindex_which_means_zeta_RR}{\deltar} \defi \Rglobal\sqrt{\K} \cot(\Rglobal\sqrt{\K})$ if $\K > 0$ else $1$.

We define now the main properties that will be assumed on the function $\F$ to be minimized.
\begin{definition}[Geodesic Convexity and Smoothness] \label{def:g-convex_smooth}
    Let $\F:\M \to \R$ be a differentiable function defined on a Riemannian manifold $(\M,\mathfrak{g})$. Given $\newtarget{def:riemannian_smoothness_of_F}{\L}\geq \newtarget{def:strong_g_convexity_of_F}{\mu} > 0$, we say that $\F$ is $\L$-smooth in $\NN \subseteq \M$, and respectively $\mu$-strongly g-convex, if for any two points $x, y \in \NN$, $\F$ satisfies
    \[
        \F(y) \leq \F(x) + \innp{\nabla \F(x), y\riemMinus x} + \frac{\L}{2}\dist(x,y)^2, \text{ resp. } \F(y) \geq \F(x) + \innp{\nabla \F(x), y\riemMinus x} + \frac{\mu}{2}\dist(x,y)^2.
    \]
    We say $\F$ is g-convex if the second inequality above, i.e., $\mu$-strong g-convexity, is satisfied with $\mu=0$.  We have used the formal notation above for the subtraction of points in the inner product.  
\end{definition}

Our main technique consists of mapping the ball $\BR$ to a subset $\X$ of the Euclidean space via a geodesic map $\h$. Given the gradient of a point $x\in\BR$, convexity defines a lower bound on the function that is affine over the tangent space of $x$, namely $\newtarget{def:affine_lower_bound}{\affLowerBound}(y) = \F(x)+\innp{\nabla \F(x), y\riemMinus x}\leq \F(y)$ and it implies a minimizer must be in the halfspace $H=\{y|\innp{\nabla \F(x), y\riemMinus x} \leq 0\}$, since $\affLowerBound(\cdot)$ is greater than $\F(x)$ outside of $H$. This lower bound induces, via the geodesic map, a function on $\X$. And $H$ is mapped to a halfspace $H'$ in the Euclidean space, because $\{\h(y)|\innp{\nabla \F(x), y\riemMinus x} = 0\}$ is mapped to a hyperplane by the definition of geodesic map. We find a lower bound of $\affLowerBound\circ \h^{-1}$ that is affine over $H'$ and such that it is equal to $\F(x)$ at $\h(x)$, despite the geodesic map being non-conformal, deforming distances, and breaking convexity, cf. \cref{prop:bounding_hyperplane}. This allows to aggregate the lower bounds easily in the Euclidean space by taking an average, in the same spirit as mirror descent algorithms do. We believe that effective lower bound aggregation is key to achieving Riemannian acceleration and optimality and it has been the main hurdle of previous algorithms. Using this strategy, we are able to define a continuous method that we discretize using an approximate implementation of the implicit Euler method, achieving the same rates as the Euclidean \AGD{}, up to constants and log factors, for the optimization of g-convex smooth functions. Our reductions take into account the deformations produced by the geometry to generalize existing optimal Euclidean reductions \citep{allen2016optimal, allen2014linear}. Applying them, we obtain an analogous algorithm for strongly g-convex and smooth functions. Applying them again to the latter they yield an algorithm for g-convex smooth functions with the rates of the same order as the first one, up to geometric constants. We can use this algorithm to implement an approximate ball optimization oracle of radius $\RR$ satisfying $\RR\sqrt{\abs{\K}}=\bigo{1}$. Iterating the application of this oracle, cf. \cref{alg:instance_of_riemacon}, we obtain global acceleration with constants depending on $\Rglobal$ that are much better than if we just used \cref{alg:accelerated_gconvex} with $\RR\gets\Rglobal$.

\newtarget{sec:related_work}{}\paragraph{Comparison with Related Work.} There are a number of works that study the problem of first-order acceleration in Riemannian manifolds of bounded sectional curvature. The first study is \citep{liu2017accelerated}. In this work, the authors develop an accelerated method with the same rates as \AGD{} for both g-convex and strongly g-convex functions, provided that at each step a given non-linear equation can be solved. No algorithm for solving this equation has been found and, in principle, it could be intractable or infeasible. In \citep{alimisis2019continuous} a continuous method analogous to the continuous approach to accelerated methods is presented, but it is not known if there exists an accelerated discretization of it. In \citep{alimisis2020practical}, an algorithm presented is claimed to enjoy an accelerated rate of convergence, but fails to provide convergence when the function value gets below a potentially large constant that depends on the manifold and smoothness constant. The work \citep{lin2020accelerated} is inspired by accelerated algorithms and focuses on adapting to the strong g-convex parameter but does not obtain accelerated algorithms. In \citep{huang2019extending} an accelerated algorithm is presented but relying on strong geometric inequalities that are not proved to be satisfied. \citet{zhang2018towards} obtain a \textit{local} algorithm that optimizes $\L$-smooth and $\mu$-strongly g-convex functions achieving the same rates as \AGD{} in the Euclidean space, up to constants. That is, the initial point needs to start close to the optimum, $O((\mu/\L)^{3/4})$ close, to be precise. Their approach consists of adapting Nesterov's estimate sequence technique by keeping a quadratic on $\Tansp{x_i}\M$ that induces on $\M$ a regularized lower bound on $\F(\xast[\notilde])$ via $\expon{x_i}(\cdot)$. They build another lower bound by aggregating the information yielded by the gradient $\nabla \F(x_i)$ to it, and use a geometric lemma to find a quadratic in $\Tansp{x_{i+1}}\M$ whose induced function lower bounds the previous one. \citet{ahn2020nesterov} generalize the previous algorithm and, by using similar ideas for the lower bound, they adapt it to work globally, obtaining strictly better rates than \RGD{}, recovering the local acceleration of the previous paper, but not achieving global rates comparable to the ones of \AGD{}. In fact, they prove that their algorithm eventually decreases the function value at a rate close to \AGD{} but this can take as many iterations as the ones needed by \RGD{} to reach the neighborhood of the previous local algorithm, cf. \cref{remark:comparison_rates_riemannian}.

In our work, we take a step back and focus on the constant sectional curvature case to provide a global algorithm that achieves the same rates as \AGD{}, up to constants on $\RR$, $\K$, and log factors. 
It is common to characterize the properties of spaces of bounded sectional curvature by using the ones of the spaces of constant extremal sectional curvature \citep{grove1997comparison, zhang2016first, zhang2018towards}, which makes the study of the constant sectional curvature case critical to the development of fully accelerated algorithms in the general bounded sectional curvature case. 
Our work also studies g-convexity besides strong g-convexity. No previous accelerated algorithms applied to this case. Because of the hardness of the geometry, our convergence rates have geometric constants depending on $\RR\sqrt{\abs{\K}}$. The constants of \cref{alg:accelerated_gconvex} are polynomial on $1/\cos(\RR\sqrt{\abs{\K}})$ in spherical spaces and $\cosh(\RR\sqrt{\abs{\K}})$ in hyperbolic spaces. \cref{alg:instance_of_riemacon} is a global and fully accelerated method that uses \cref{alg:accelerated_gconvex} as a subroutine with parameters satisfying $\RR\sqrt{\abs{\K}} = O(1)$. See \cref{table:comparisons:riemannian} for the final convergence rates of our algorithms and previous works. We note that previous works had to assume that their iterates would stay inside of a set defined a priori in order to bound the geometric deformations, while we do not need to make such assumption.

Due to the geometry, there are lower bounds \citep{hamilton2021no, criscitiello2021negative} that say for instance that in the strongly g-convex case, one must query the gradient oracle $\widetilde{\Omega}(\RR)$ times on several negatively curved Riemannian manifolds. This does not preclude achieving a globally accelerated rate, unless this lower bound preponderates over the accelerated dependence on the condition number. See \cref{remark:related_work_hardness_hyperbolic} for a comment on these lower bounds. We showed in \cref{prop:lower_bound_on_condition_number} a lower bound on the condition number of any strongly g-convex function defined on $\BR$. 

\begin{table}
    \centering
    \caption{Worse-case rates of related works for smooth problems with ${\protect\Rglobal} \geq {\protect\dist}({\protect\xInit[{\protect\notilde}]}, {\protect\xastg})$. {\protect\AGD{}} is a Euclidean algorithm. We used ${\protect\kappa} \defi {\protect\L}/{\protect\mu}$. The values $c_1$, $c_2$ are polynomial on ${\protect\ck[{\protect\Rglobal}\sqrt{\abs{{\protect\K}}}]}^{-{\protect\sign}({\protect\K})}$. \cref{alg:instance_of_riemacon} analyzed in \cref{thm:reduction_to_ball_opti} reduces these constants significantly.}
    \label{table:comparisons:riemannian} 
\begin{tabular}{llll} 
    \toprule
    \textbf{Method}   &  \textbf{g-convex} & \textbf{$\mu$-st. g-convex} & \textbf{curv. K?}    \\
    \midrule
    \midrule
    \AGD{} \citep{nesterov1983method}               & $O(\sqrt{\L\Rglobal[2]/\epsilon})$  & $\bigotilde{\sqrt{\kappa}}$ & 0 \\
    \citep{zhang2018towards} (it only works locally)   & -  & $\bigotilde{\sqrt{\kappa}}$ & bounded   \\
    \citep{ahn2020nesterov}                         & - & $\bigo{\kappa\log\kappa} +\bigotilde{\sqrt{\kappa}}$ & bounded \\
    \textbf{\cref{remark:comparison_rates_riemannian} (\RGD{}$ + $\citep{zhang2018towards})}       & - & $\bigo{\kappa\log\kappa} +\bigotilde{\sqrt{\kappa}}$ & bounded\\
    \textbf{\cref{thm:riemannian_acceleration} and \cref{coroll:acceleration_st_g_convex} resp.} & ${\bigotilde{c_1\sqrt{\L\Rglobal[2]/\epsilon}}}$ & $\bigotilde{c_2\sqrt{\kappa}}$ & ctant.$\neq 0$ \\
    \textbf{\cref{thm:reduction_to_ball_opti}} & $\bigotilde{\zetar^{3/2}\deltar^{-1/2}\sqrt{\zetar+\L\Rglobal[2]/\epsilon}}$ & $\bigotilde{\zetar^{3/2}\sqrt{\kappa}}$ & ctant.$\neq 0$ \\
    \bottomrule
\end{tabular}
\end{table}

On Euclidean optimization, a related work is the \textit{approximate duality gap technique} \citep{diakonikolas2017approximate}, which presents a unified view of the analysis of first-order methods. It defines a continuous duality gap and by enforcing a natural invariant, it obtains accelerated continuous dynamics and their discretizations for most classical first-order methods. A derived work \citep{diakonikolas2017accelerated} obtains Euclidean acceleration in a fundamentally different way from previous acceleration approaches, namely using an approximate implicit Euler method for the discretization of the acceleration dynamics. Our convergence analysis of \cref{thm:acceleration_quasiquasarconvexity} draws ideas from these two works. \citet{carmon2020acceleration} initiated the study of optimization with ball oracles, which is an active line of research \citep{carmon2021thinking, asi2021stochastic, carmon2022distributionally} and in turn, this shares some similarities with trust region methods \citep{conn2000trust}. We will see in the sequel that, for our manifolds of interest, g-convexity is related to a model known as quasar-convexity or weak-quasi-convexity \citep{guminov2017accelerated,nesterov2018primal, hinder2019near}.

\section{Algorithms}\label{sec:algorithm}
We study the minimization problem $\min_{x\in \Mk}{\F(x)}$ with a gradient oracle, for a twice differentiable smooth function $\F:\Mk\to\R$ that is g-convex or strongly g-convex, for an initial point $\xInit[\notilde]$.  We recall $\Mk$ refers to any manifold of constant non-zero sectional curvature such that $\BR=\expon{\xInit[\notilde]}(\ball(0, \RR))\subset\Mk$ for some $\RR>0$. We work in this setting in this entire section. A minimizer $\xastg$ of $\F$ that is assumed to exist, possibly outside of $\BR$, and we denote $\Rglobal>\dist(\xInit[\notilde],\xastg)$ a bound on the initial distance to $\xastg$. We perform constrained optimization over $\BR$ and control deformations caused by the geometry. We defer the proofs of the lemmas and theorems in this and following sections to the appendix. We assume without loss of generality that the sectional curvature of $\Mk$ is $\K \in \{1,-1\}$, since for any other value of $\K$ and any function $\F:\Mk\to\R$ defined on such a manifold, we can reparametrize $\F$ by rescaling so that it is defined over a manifold of constant sectional curvature $\K \in \{1,-1\}$. The parameters $\L$, $\mu$ and $\RR$ are rescaled accordingly as a function of $\K$, cf. \cref{remark:rescaling_of_K}. We denote the special cosine by $\newtarget{def:special_cosine}{\ck[\cdot]}$, which is $\cos(\cdot)$ if $\K=1$ and $\cosh(\cdot)$ if $\K=-1$. For a geodesic map $\h:\M\to \MEucl$, we define $\X \defi \h(\BR)\subseteq \MEucl \subseteq \R^{\n}$. We use classical geodesic maps for the manifolds that we consider: the Gnomonic projection for $\K=1$ and the Beltrami-Klein projection for $\K=-1$ \citep{greenberg1993euclidean}. They map an open hemisphere and the hyperbolic space of curvature $\K\in\{1,-1\}$ to $\R^{\n}$ and $B(0,1)\subseteq\R^{\n}$, respectively. We will derive our results from the following characterization of $\h$ \citep{greenberg1993euclidean}. Let $\tilde{x}, \tilde{y} \in \X$ be two points. Recall that we denote $x = \h^{-1}(\tilde{x}), y=\h^{-1}(\tilde{y}) \in \BR$. Then we have that $\dist(x,y)$, the distance between $x$ and $y$ with the metric of $\Mk$, satisfies
\begin{equation}\label{eq:characterization_of_geodesic_map_and_metric}
    \ck[\dist(x,y)]  = \frac{1+\K\innp{\tilde{x}, \tilde{y}}}{\sqrt{1+\K\norm{\tilde{x}}^2}\cdot\sqrt{1+\K\norm{ \tilde{y}}^2}}.
\end{equation}
Observe that the expression is symmetric with respect to rotations. In particular, $\X$ is a closed ball of some radius $\newtarget{def:radius_of_Euclidean_ball}{\Rtilde}$. Using $\tilde{x}=0$ and $\tilde{y}$ such that $\dist(\xInit[\notilde],y)=\RR$, we have $\ck[\RR] = (1+\K\Rtilde^2)^{-1/2}$. 

Consider a point $x\in\BR$ and the lower bound provided by the g-convexity assumption when computing $\nabla \F(x)$. Dropping the $\mu$ term in case of strong g-convexity, this bound is affine over $\Tansp{x} \BR$. In order to define a duality gap, as we show in \cref{subsec:sketch_of_my_axgd_proof}, we would like our algorithm to aggregate effectively the lower bounds it computes during the course of the optimization. The deformations of the geometry make the aggregation a difficult task, despite the fact that we have a simple description of each individual lower bound: each of them is affine over $\Tansp{\xI[i][][]}\BR$ but these simple functions are defined on different tangent spaces. 
We deal with this problem by obtaining a lower bound that is looser by a constant depending on $\RR$, and that is affine over $\X\subset\R^{\n}$. In this way the aggregation becomes easier: all of them are simple and are in the same space. Then, we are able to combine this lower bound with decreasing upper bounds in the fashion some other accelerated methods work in the Euclidean space \citep{nesterov1983method, allen2014linear, diakonikolas2017accelerated,  diakonikolas2017approximate}. Alternatively, we can see the approach in this work as the constrained optimization problem of minimizing the non-convex function $\newtarget{def:tilted_convex_function_f_from_F}{\f}:\X\to\R$, $\tilde{x} \mapsto \F(\h^{-1}(\tilde{x}))$
\[
    \text{minimize}\ \  \f(\tilde{x}),\quad \text{ for } \tilde{x}\in\X.
\]
In the rest of the section, we will focus on the g-convex case. For simplicity, instead of solving the strongly g-convex case directly in an analogous way by finding a lower bound that is quadratic over $\X$, we rely on the reductions of \cref{sec:reductions} to obtain the accelerated algorithm in this case.

The following two lemmas show that finding the affine lower bound is possible, and is defined as a function of $\nabla \f(\tilde{x})$. We first gauge the deformations caused by the geodesic map $\h$. Distances are deformed, the map $\h$ is not conformal, and the image of the geodesic $\expon{x}(\lambda \nabla \F(x))$ is not mapped into the image of the geodesic $\tilde{x}+\tilde{\lambda}\nabla \f(\tilde{x})$, i.e., the direction of the gradient changes. We are able to find the affine lower bound after bounding these deformations.
\begin{lemma}\linktoproof{lemma:deformations}\label{lemma:deformations}
    Let $\K\in\{1,-1\}$. Let $x,y\in\BR$ be two different points, and in part $b)$ different from $\xInit[\notilde]$. Let $\tilde{\alpha}$ be the angle $\angle \xInit\tilde{x}\tilde{y}$, formed by the vectors $\xInit-\tilde{x}$ and $\tilde{y}-\tilde{x}$. Let $\alpha$ be the corresponding angle, the one between the vectors $\exponinv{x}(\xInit[\notilde])$ and $\exponinv{x}(y)$. Assume without loss of generality that $\tilde{x} \in \operatorname{span}\{\hyperlink{def:e_i_canonical_basis}{\color{black}\tilde{e}_1}\}$ and $\nabla \f(\tilde{x}) \in \operatorname{span}\{\hyperlink{def:e_i_canonical_basis}{\color{black}\tilde{e}_1}, \hyperlink{def:e_i_canonical_basis}{\color{black}\tilde{e}_2}\}$ for the canonical orthonormal basis $\{\newtarget{def:e_i_canonical_basis}{\color{black}\hyperlink{def:e_i_canonical_basis}{\color{black}\tilde{e}_i}}\}_{i=1}^{\n}$. Let $e_i\in \Tansp{x}\Mk$ be the unit vector such that $\h$ maps the image of the geodesic $\expon{x}(\lambda e_i)$ to the image of the geodesic $\tilde{x}+\tilde{\lambda}\tilde{e}_i$, for $i=1,\dots, \n$, and $\lambda, \tilde{\lambda} \geq 0$. Then, the following holds.
    \begin{enumerate}[label=\alph*), leftmargin=*,]
        \item Distance deformation:
    \[
        \K \ck[\RR][2] \leq \K \frac{\dist(x,y)}{\norm{\tilde{x}-\tilde{y}}} \leq \K.
    \]
        \item Angle deformation:
    \[
\sin(\alpha) = \sin(\tilde{\alpha}) \sqrt{\frac{1+\K\norm{\tilde{x}}^2}{1+\K\norm{\tilde{x}}^2\sin^2(\tilde{\alpha})}}, \quad \quad \cos(\alpha) = \cos(\tilde{\alpha}) \sqrt{\frac{1}{1+\K\norm{\tilde{x}}^2\sin^2(\tilde{\alpha})}}.
    \]
        \item Gradient deformation:
    \[
    \nabla \F(x) = (1+\K\norm{\tilde{x}}^2)\nabla \f(\tilde{x})_1 e_1 + \sqrt{1+\K\norm{\tilde{x}}^2}\nabla \f(\tilde{x})_2 e_2 \quad \text{ and } \quad e_i \perp e_j \ \text{ for } i\neq j.
    \]
    And if $v \in \Tansp{x}\Mk$ is a vector that is normal to $\nabla \F(x)$, then $\tilde{v}$ is normal to $\nabla \f(x)$. 
    \end{enumerate}
\end{lemma}

The previous lemma shows that $\nabla f(\tilde{x})$ can be easily computed from $\nabla F(x)$. The following lemma uses the deformations described in \cref{lemma:deformations} to obtain the affine lower bound on the function, given a gradient at a point $\tilde{x}$. Note that \cref{lemma:deformations}.c implies that we have $\innp{\nabla \f(\tilde{x}), \tilde{y}-\tilde{x}}= 0$ if and only if $\innp{\nabla \F(x), y\riemMinus x}= 0$. In the proof we lower bound, generally, affine functions defined on $\Tansp{x}\Mk$ by affine functions in the Euclidean space $\X$. This generality allows to obtain a result with constants that only depend on $\RR$. See \cref{remark:constants} for a discussion on these constants.

\begin{lemma}\linktoproof{prop:bounding_hyperplane}\label{prop:bounding_hyperplane}
    Let $\F:\Mk\to\R$ be differentiable and let $\f=\F\circ \h^{-1}$.
    There are constants $\gamman, \gammap \in (0, 1]$ depending on $\RR$ only such that for all $x, y\in\BR$ satisfying $\innp{\nabla \f(\tilde{x}), \tilde{y}-\tilde{x}}\neq 0$ we have: 
    \begin{align}\label{eq:quotient_of_inner_products_is_bounded} 
 \begin{aligned}
     \gammap \leq\frac{\innp{\nabla \F(x), y\riemMinus x}}{\innp{\nabla \f(\tilde{x}), \tilde{y}-\tilde{x}}} \leq \frac{1}{\gamman}.
   \end{aligned}
\end{align}
    In particular, if $\F$ is g-convex we have the following condition, that we call tilted-convexity:
\begin{align} \label{eq:quasiquasarconvexity}
 \begin{aligned}
     \f(\tilde{x}) +  \frac{1}{\gamman}\innp{\nabla \f(\tilde{x}), \tilde{y}-\tilde{x}}\leq \f(\tilde{y})  &\ & & {\text{ if } \innp{\nabla \f(\tilde{x}), \tilde{y}-\tilde{x}}}\leq 0, \\
     \f(\tilde{x})+\gammap\innp{\nabla \f(\tilde{x}), \tilde{y}-\tilde{x}} \leq \f(\tilde{y})  &\ & & {\text{ if } \innp{\nabla \f(\tilde{x}), \tilde{y}-\tilde{x}}} \geq 0.
   \end{aligned}
\end{align}
\end{lemma}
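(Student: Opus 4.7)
The plan is to derive a closed form for the ratio in~\eqref{eq:quotient_of_inner_products_is_bounded} using the deformations catalogued in \cref{lemma:deformations}, and then bound each factor uniformly in $R$.

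First, by \cref{lemma:deformations}(c) the vectors $\{e_i\}$ form an orthonormal basis of $T_x\M$ and $\nabla F(x)\in\mathrm{span}(e_1,e_2)$, so setting $u_i = \innp{\expon_x^{-1}(y),e_i}$ gives
\[
\innp{\nabla F(x),y-x} \;=\; (1+K\|\tilde x\|^2)\,\nabla f(\tilde x)_1\,u_1 \;+\; \sqrt{1+K\|\tilde x\|^2}\,\nabla f(\tilde x)_2\,u_2,
\]
so it suffices to express $u_1,u_2$ in terms of $a_i = \innp{\tilde y-\tilde x,\tilde e_i}$. Since $\expon_x^{-1}(x_0)$ is a negative multiple of $e_1$ and $\tilde x_0-\tilde x$ is a negative multiple of $\tilde e_1$, the angle $\bar\alpha$ between $e_1$ and $\expon_x^{-1}(y)$ and the angle $\bar{\tilde\alpha}$ between $\tilde e_1$ and $\tilde y-\tilde x$ are supplementary to the $\alpha,\tilde\alpha$ of part~(b) and inherit the same trigonometric identities. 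This yields $u_1 = d(x,y)\cos\bar\alpha$ directly in terms of $\bar{\tilde\alpha}$. For $u_2$, I would decompose $\expon_x^{-1}(y)/d(x,y) = \cos\bar\alpha\,e_1 + \sin\bar\alpha\,\hat w$ with $\hat w\perp e_1$, and mirror this on the Euclidean side with $(\tilde y-\tilde x)/\|\tilde y-\tilde x\| = \cos\bar{\tilde\alpha}\,\tilde e_1 + \sin\bar{\tilde\alpha}\,\hat{\tilde w}$. By the rotational symmetry of both the sphere and hyperbolic space about the $x_0$-$x$ axis---respected by both the Gnomonic and Beltrami--Klein projections---the identification $\tilde e_i\leftrightarrow e_i$ for $i\geq 2$ commutes with these rotations, hence $\innp{e_2,\hat w} = \innp{\tilde e_2,\hat{\tilde w}}$.

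Substituting and collecting the common $\sqrt{1+K\|\tilde x\|^2}$ factors collapses everything into the clean identity
\[
\frac{\innp{\nabla F(x),y-x}}{\innp{\nabla f(\tilde x),\tilde y-\tilde x}} \;=\; \frac{d(x,y)}{\|\tilde y-\tilde x\|}\cdot\frac{1+K\|\tilde x\|^2}{\sqrt{1+K\|\tilde x\|^2\sin^2\bar{\tilde\alpha}}}.
\]
Both factors are strictly positive, and each is bounded above and below by constants depending only on $R$: the first by part~(a), and the second by $\|\tilde x\|\leq\tilde R$, $\sin^2\bar{\tilde\alpha}\in[0,1]$, and $C_K(R)=(1+K\tilde R^2)^{-1/2}$. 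Reading off the infimum and supremum yields the constants $\gamma_p$ and $1/\gamma_n$; concretely, one may take $\gamma_p=\gamma_n=\cos^2 R$ on the sphere and $\gamma_p=\gamma_n=\mathrm{sech}^2 R$ in hyperbolic space, both in $(0,1]$.

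The corollary~\eqref{eq:quasiquasarconvexity} then follows by substituting the ratio into the g-convex inequality $f(\tilde y)\geq f(\tilde x)+\innp{\nabla F(x),y-x}$ and choosing the extreme of $[\gamma_p,1/\gamma_n]$ that produces the weaker (still valid) lower bound according to the sign of $\innp{\nabla f(\tilde x),\tilde y-\tilde x}$. The main obstacle I foresee is the rotational-symmetry step that upgrades the planar identities of part~(b) to arbitrary ambient dimension via $\innp{e_2,\hat w}=\innp{\tilde e_2,\hat{\tilde w}}$; the tempting shortcut of ``reducing to 2D'' by orthogonally projecting $\tilde y$ onto $\mathrm{span}(\tilde e_1,\tilde e_2)$ in $\R^d$ is incorrect, since it does not correspond to a geodesic projection in $\M$, so the symmetry of the geodesic map itself must be invoked.
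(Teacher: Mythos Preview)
Your approach is correct and follows essentially the same route as the paper: use \cref{lemma:deformations} to produce an exact closed form for the ratio, then bound each factor over $\M$. The dimension reduction via rotational symmetry about the $x_0$--$x$ axis is the same mechanism the paper invokes (there phrased as $\nu=\tilde\nu$); the key fact making your $\innp{e_2,\hat w}=\innp{\tilde e_2,\hat{\tilde w}}$ step work is that $(\mathrm{d}h^{-1})_{\tilde x}$ acts as a scalar on $\{\tilde e_1\}^\perp$ (all eigenvalues for $i\ge 2$ equal $(1+K\|\tilde x\|^2)^{-1/2}$), hence preserves angles in that hyperplane.

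Where you differ is in the bookkeeping. The paper writes the ratio as $\frac{\|\nabla F(x)\|}{\|\nabla f(\tilde x)\|}\cdot\frac{\|v\|}{\|\tilde v\|}\cdot\sqrt{\frac{1+K\tilde\ell^2}{(1+K\tilde\ell^2\sin^2\tilde\delta)(1+K\tilde\ell^2\cos^2\tilde\beta)}}$ and then bounds the three factors separately, arriving at $\gammap=\cos^2 R,\ \gamman=\cos^3 R$ for $K=1$ and $\gammap=\cosh^{-3}R,\ \gamman=\cosh^{-2}R$ for $K=-1$. By working directly in the $\{e_i\}$ basis you absorb the gradient--norm ratio (which equals $\sqrt{(1+K\tilde\ell^2)(1+K\tilde\ell^2\cos^2\tilde\beta)}$) into the square root, eliminating the $\tilde\beta$ dependence and landing on the cleaner identity $\frac{d(x,y)}{\|\tilde y-\tilde x\|}\cdot\frac{1+K\|\tilde x\|^2}{\sqrt{1+K\|\tilde x\|^2\sin^2\bar{\tilde\alpha}}}$. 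Bounding this two-factor form gives your symmetric $\gammap=\gamman=C_K^2(R)$, which is genuinely sharper than the paper's constants in one of the two bounds for each curvature sign. So your argument is not only correct but a mild streamlining of the paper's.
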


\begin{figure}[h!]
    \centering
    \includegraphics[width=\linewidth]{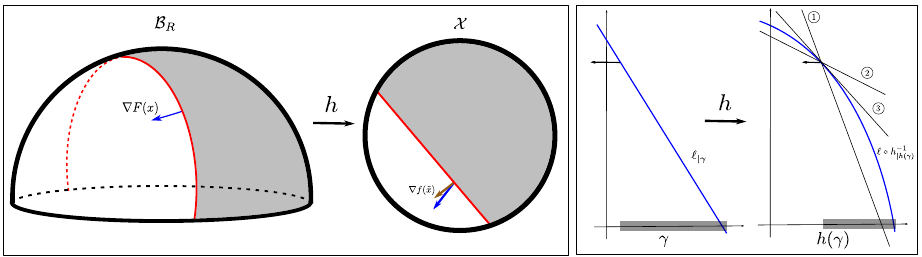}
    \caption{Deformations of the map $\protect\h$.}
    \label{fig:intuition}
\end{figure}

    We provide intuition for the previous lemma through \cref{fig:intuition}. The geodesic that $\nabla \F(x)$ induces on $\BR$ corresponds to the geodesic that the blue vector would induce on $\X$, which is in a different direction than the one induced by $\nabla \f(\tilde{x})$. The angle and gradient deformations of \cref{lemma:deformations} allow to show that, for any direction, inducing a geodesic $\gamma$, the slope of the affine function induced by $\nabla \f(\tilde{x})$ on $\X$ is within a constant factor $c_1$ of the one of the lower bound $\affLowerBound$ defined by $\nabla \F(x)$ in $\BR$. Our main aim is to bound $\F(\xast[\notilde])$ and the shaded area of each image represents where $\xast[\notilde]$ can be. On the right, we exemplify the deformation on a geodesic $\gamma$ passing through $x$. Initially, we have the affine lower bound $\affLowerBound$, but the map $\h$ deforms the domain. To lower bound the function on the shaded region, we can use an affine lower bound $\circled{1}$. Its slope is within a constant factor of the one of the tangent line $\circled{3}$ by the distance deformation of \cref{lemma:deformations} and the factor $c_1$---the latter bounds the change of the directional derivatives, in black. This gives the first line of $\eqref{eq:quasiquasarconvexity}$. The other one is analogous, using another affine function $\circled{2}$.

The first inequality in tilted-convexity shows the affine lower bound, which can be used to bound $\f(\xast)=\F(\xast[\notilde])$. This first inequality, only applied to $\tilde{y}=\xast$ for a function $f :\R^{\n}\to\R$, defines a model known in the literature as quasar-convexity or weak-quasi-convexity \citep{guminov2017accelerated,nesterov2018primal, hinder2019near}, for which accelerated algorithms exist in the \textit{unconstrained case}, provided smoothness is also satisfied. However, to the best of our knowledge, there is no known algorithm for solving the constrained case in an accelerated way. The condition in \eqref{eq:quasiquasarconvexity} is a relaxation of convexity that is stronger than quasar-convexity. We will make use of \eqref{eq:quasiquasarconvexity} in order to obtain acceleration in the constrained setting. This is of independent interest. Recall that we need the constraint to guarantee bounded deformation due to the geometry. We also require gradient Lipschitzness of $\f$, which we obtain in the following lemma.

\begin{lemma}\linktoproof{lemma:smoothness_of_transformed_function}\label{lemma:smoothness_of_transformed_function}
    The function $\f$ has $O(\L(\Rglobal+1))$-Lipschitz gradients in $\X =\h(\BR)$ if our g-convex function $\F:\Mk\to\R$ is $\L$-smooth in $\BR$.
\end{lemma}

Inspired by the \textit{approximate duality gap technique} \citep{diakonikolas2017approximate} we obtain accelerated continuous dynamics, for the optimization of the function $\f$. Then we achieve acceleration by obtaining an implicit Euler discretization of the dynamics. \AGD{} and techniques as Linear Coupling \citep{allen2014linear} or Nesterov's estimate sequence \citep{nesterov1983method} are equivalent to using explicit discretization. These techniques use a balancing gradient step at each iteration to compensate the regret of an implicit or explicit dual algorithm, like mirror descent. Our use of a looser lower bound makes this regret greater by a constant factor and it complicates guaranteeing finding a gradient step within the constraints to compensate this greater regret. Our implicit discretization does not present this problem. We state here the accelerated theorem and provide a sketch of the proof in \cref{subsec:sketch_of_my_axgd_proof}. Note that, for simplicity, we set the Lipschitz condition on $\f$ to be the one yielded by \cref{lemma:smoothness_of_transformed_function} and the bound $\Rglobal > \dist(\xInit, \xastg)$. The algorithm depends logarithmically on this Lipschitz constant.

\begin{theorem}\linktoproof{thm:acceleration_quasiquasarconvexity}\label{thm:acceleration_quasiquasarconvexity}
    Let $\newtarget{def:convex_set_of_tilted_convex_thm}{\Q}\subseteq \R^{\n}$ be a closed convex set of diameter $2\tilde{R}$. Let $\newtarget{def:tilted_convex_function_generic_euclidean}{\ftilted}:\Q\to\R$ be a tilted-convex function with constants $\newtarget{def:gamma_n_parameter}{\gammanparam},\newtarget{def:gamma_p_parameter}{\gammapparam} \in (0,1]$, and $\newtarget{def:generic_smoothness_in_euclidean_case}{\Ltildegeneric}$-Lipschitz gradients. Let $\f$ be $\Ltildegeneric(\Rglobaltilde+2\tilde{R})$ Lipschitz. We can obtain an $\epsilon$-minimizer of $\ftilted$ in $\Q$ by using $\bigotilde{ [\Ltildegeneric \tilde{R}^2/(\gammanparam[2]\gammapparam\epsilon)]^{1/2} }$ queries to the gradient oracle of $\ftilted$.
\end{theorem}

Finally, we show Riemannian acceleration as a consequence of the previous results. Recall that $\bigotilde{\cdot}$ omits constants with respect to $\RR$ since in the sequel we use \cref{alg:accelerated_gconvex}, with $\RR = \bigo{1}$, as a subroutine for our final \cref{alg:instance_of_riemacon}.

\begin{theorem}[\textbf{g-Convex Acceleration}]\linktoproof{thm:riemannian_acceleration}\label{thm:riemannian_acceleration}
    Let $\F:\Mk\to\R$ be an $\L$-smooth and g-convex function, $\Rglobal \geq \dist(\xInit[\notilde],\xastg)$ and let $\RR>0$. \cref{alg:accelerated_gconvex} computes a point $\xI[\T][][]\in\BR \defi \expon{\xInit[\notilde]}(\ball(0,\RR))$ satisfying $\F(\xI[\T][][])-\min_{x\in\BR}\F(x) \leq \epsilon$ using $\bigotilde{\sqrt{\L(\Rglobal+1)/\epsilon}}$ queries to the gradient oracle.
\end{theorem}

\begin{algorithm}[ht!]
    \caption{Global Fully Accelerated g-Convex Minimization}
    \label{alg:accelerated_gconvex}

\begin{algorithmic}[1] 
    \REQUIRE Initial point $\xInit[\notilde] \in\Mk$. Constants $\Ltildegeneric$, $\gammap$, $\gamman$. Geodesic map $\h$ satisfying \eqref{eq:characterization_of_geodesic_map_and_metric} and $\h(\xInit[\notilde])=0$.
    \Statex Smooth and g-convex function $\F:\Mk\to\R$ with a minimizer $\xast[\notilde] \in\BR$.
    \Statex Bound on the distance to a minimum $\RR \geq \dist(\xInit[\notilde],\xast[\notilde])$. Accuracy $\epsilon$ and number of iterations $\T$.
    \vspace{0.1cm}
    \hrule
    \vspace{0.1cm}
    \State $\X \defi \h(\BR)\subseteq \MEucl$; $\quad \f \defi \F \circ \h^{-1}\quad$ and $\quad\psi(\tilde{x}) \defi \frac{1}{2}\norm{\tilde{x}}^2$ 
    \State $\zi[0] \gets \nabla \psi(\xInit)$; $\quad \At[0] \gets 0$
    \FOR {$i = 0 \textbf{ to } \T-1$}
        \State $\ai[i+1] \gets (i+1)\gamman[2]\gammap/2\Ltildegeneric$
        \State $\At[i+1] \gets \At[i] + \ai[i+1]$
        \State $\lambda \gets \text{BinaryLineSearch}(\xI[i], \zi[i], \f, \X, \ai[i+1], \At[i], \epsilon, \Ltildegeneric,  \gamman, \gammap)$ \Comment{(cf. \cref{alg:bin_search} in \cref{app:acceleration})}
        \State $\chii[i] \gets (1-\lambda)\xI[i] + \lambda \nabla \psi^{\astfenchel}(\zi[i])$ 
        \State $\zetai[i] \gets \zi[i]-(\ai[i+1]/\gamman)\nabla \f(\chii[i])$
        \State $\xI[i+1] \gets (1-\lambda)\xI[i] + \lambda \nabla \psi^{\astfenchel}(\zetai[i])$  \Comment{$\big[\nabla \psi^{\astfenchel}(\tilde{p})= \argmin_{\tilde{z} \in \X} \{\norm{\tilde{z}-\tilde{p}}\} = \Pi_{\X}(\tilde{p})\big]$}
        \State $\zi[i+1] \gets \zi[i]-(\ai[i+1]/\gamman)\nabla \f(\xI[i+1])$
    \ENDFOR
    \State \textbf{return} $\xI[\T][][]$.
\end{algorithmic}
\end{algorithm}

We provide a sketch of the main optimization theorem in the section below. The full proof can be found in \cref{app:acceleration}. Our use of geodesic maps was a choice we used to be able to aggregate lower bounds. Our method showcases that an effective lower bound aggregation makes possible to achieve global full acceleration. It suggests that acceleration could also be achieved for functions defined on other manifolds by using our accelerated techniques if we can effectively aggregate the lower bounds yielded by the gradient at each iteration to build a lower bound on $\F(\xast[\notilde])$, similarly as in \eqref{eq:raw_lower_bound} below. We observe that if there is a geodesic map mapping a manifold into a convex subset of the Euclidean space then the manifold must necessarily have constant sectional curvature, cf. Beltrami's Theorem \citep{busemann1984general, kreyszig1991differential}. 

This means that lower bound aggregation in other manifolds would need to use a different kind of transformations.
The field of comparison geometry allows to obtain properties of spaces of bounded sectional curvature by using the properties of the spaces that have constant curvature equal to the bounds of the former  \citep{grove1997comparison}. Other Riemannian optimization algorithms have used comparison theorems that allow to obtain convergence bounds after computing the maximum possible deformations in spaces of extremal constant sectional curvature and relating them to the spaces of bounded sectional curvature \citep{zhang2016first, zhang2018towards}. The generalization to functions defined on manifolds of bounded sectional curvature is a future direction of research.  

\cref{alg:accelerated_gconvex}, that was yielded by this technique, presents constants that can be undesirable when $\RR$ is not $O(1)$. However, we show in the following theorem that by sequentially applying an approximate ball optimization oracle for balls of radius $\RR = O(1)$, we obtain global acceleration with greatly reduced constants, and we can implement the ball optimization oracle by using \cref{alg:accelerated_gconvex}, effectively boosting its convergence.
\begin{theorem}\label{thm:reduction_to_ball_opti}\linktoproof{thm:reduction_to_ball_opti}
    Let $\F:\Mk\to\R$ have a minimizer $\xastg$ and let $\Rglobal \geq \dist(x_0, \xastg)$. If $\K>0$ assume\footnote{By solving the ball subproblems more accurately and in smaller balls, one can relax this condition to $c\Rglobal + \RR < \frac{\pi}{2\sqrt{\K}}$ for any constants $c > 1, \RR > 0$.} $2\Rglobal+\RR < \frac{\pi}{2\sqrt{\K}}$. Let $\F$ be $\L$-smooth and $\mu$-strongly g-convex in $\ball(0,2\Rglobal+\RR)$. 
    \cref{alg:instance_of_riemacon} obtains an $\epsilon$-minimizer after $\bigotilde{\zetar^{3/2} \sqrt{L/\mu}}$ calls to the gradient oracle of $f$. By means of regularization, we obtain an algorithm for the $L$-smooth g-convex case with rates $\bigotilde{\zetar^{3/2} \deltar^{-1/2}\sqrt{\zetar + L\Rglobal[2]/\epsilon}}$.
\end{theorem}

\begin{algorithm}[ht!]
    \caption{Ball Optimization Boosting}
    \label{alg:instance_of_riemacon}

\begin{algorithmic}[1] 
    \REQUIRE Differentiable function $\F\subset\Mk\to\R$ that is $\L $-smooth and $\mu$-strongly g-convex in $\ball(\xastg, 3\Rglobal)\subset \Mk$; initial point $\xInit[\notilde] \in \Mk$; bound $\Rglobal \geq \dist(\xInit[\notilde], \xast[\notilde])$; accuracy $\epsilon$.
    \vspace{0.1cm}
    \hrule
    \vspace{0.1cm}
    \State $\RR \gets 1$; $\pk[0] \gets \xInit[\notilde]$
    \State \textbf{if} $\Rglobal \leq \RR$ \textbf{then return} $\operatorname{AlgSC}(\ball(\pk[0], \Rglobal), \pk[0], \F, \epsilon)$ \label{line:if_D_equal_R_one_single_Riemacon} \label{line:first_if_in_instanced_alg}
    \State $\newtarget{def:total_number_of_iterations_ball_boosting}{\TT} \gets \lceil \frac{2\Rglobal}{\RR} \ln(\frac{\L\Rglobal[2]}{\epsilon}) \rceil$; \ \  $\newtarget{def:accuracy_epsilon_prime_in_ball_subproblem}{\epsilonp} \gets \min\{\frac{\RR\epsilon}{4\Rglobal},\frac{\mu\Rglobal[2]}{2\TT^2}\}$
    \FOR {$k = 1 \textbf{ to } \TT$}
       \State $\X_k \gets \ball(\pk[k-1], \RR)$
       \State $\newtarget{def:iterates_p_boosting_alg}{\pk} \gets \algsc(\X_k, \pk[k-1], \F, \epsilonp)$ \label{line:alg_sc_as_subroutine}
       \State $\diamond$ $\algsc$ is the strongly convex version of \cref{alg:accelerated_gconvex} in \cref{coroll:acceleration_st_g_convex}.
    \ENDFOR
    \State \textbf{return} $x_{\TT}$.
\end{algorithmic}
\end{algorithm}

\subsection{Proof sketch of \texorpdfstring{\cref{thm:acceleration_quasiquasarconvexity}}{Theorem \ref{thm:acceleration_quasiquasarconvexity}} } \label{subsec:sketch_of_my_axgd_proof}

We let $\alphat[t]$ be an increasing function of time $t$, and denote $\At[t]=\int_{t_0}^t d\alphat[\tau]= \int_{t_0}^t \dott{\alphat}_\tau d\tau$. We define a continuous method that keeps a solution $\xt[t]$, along with a differentiable upper bound $\Ut[t]$ on $\ftilted(\xt[t])$ and a lower bound $\Lt[t]$ on $\ftilted(\xast)$. In our case $\ftilted$ is differentiable so we can just take $\Ut[t]=\ftilted(\xt[t])$. The lower bound comes from 
\begin{equation}\label{eq:raw_lower_bound}
    \ftilted(\xast) \geq \frac{\int_{t_{0}}^{t} \ftilted(\xt[\tau]) d \alphat[\tau]}{\At[t]}+\frac{\int_{t_{0}}^{t}\frac{1}{\gammanparam}\innp{\nabla \ftilted(\xt[\tau]), \xast-\xt[\tau]} d \alphat[\tau]}{\At[t]},
\end{equation}
after adding and subtracting a regularizer $\psi$, which is a $1$-strongly convex function, and after removing the unknown $\xast$ by taking a minimum over $\X$.

Note \eqref{eq:raw_lower_bound} comes from averaging \eqref{eq:quasiquasarconvexity} for $\tilde{y} = \xast$. Then, if we define the gap $\Gt[t]=\Ut[t]-\Lt[t]$ and design a method that forces $\alphat[t]\Gt[t]$ to be non-increasing, we can deduce $\ftilted(\xt[t][][])-\ftilted(\xast) \leq \Gt[t] \leq \alphat[t_0]\Gt[t_0]/\alphat[t]$. By forcing $\frac{d}{dt}(\alphat[t]\Gt[t]) =0$, we naturally obtain the following continuous dynamics, where $z_t$ is a mirror point and $\psi^{\astfenchel}$ is the Fenchel dual of $\psi$.
\begin{align}\label{modified_accelerated_continuous_dynamics}
\begin{aligned}
    \dott{\zt[]}_{t}&=-\frac{1}{\gammanparam}\dott{\alphat}_t \nabla \ftilted(\xt[t]); \ \ \
     \dott{\xt[]}_{t}&=\frac{1}{\gammanparam}\dott{\alphat}_t \frac{\nabla \psi^{\astfenchel}(\zt[t])-\xt[t]}{\alphat[t]}; \ \ \
    \zt[t_0]&=\nabla \psi^{\astfenchel}(\xt[t_0]), \xt[t_0] \in \X.
\end{aligned}
\end{align}
We note that except for the constant $\gammanparam$, these dynamics match the accelerated dynamics used in the optimization of convex functions \citep{DBLP:conf/nips/KricheneBB15, diakonikolas2017accelerated, diakonikolas2017approximate}. The AXGD algorithm \citep{diakonikolas2017accelerated}, designed for the accelerated optimization of convex functions, discretizes the dynamics coming from the optimization of convex functions by using an approximate implementation of implicit Euler discretization. This has the advantage of not needing a gradient step per iteration to compensate for some positive discretization error. In our case, the extra error we incur by using a looser lower bound does not seem to be able to be compensated by a gradient step in the constrained case, as other acceleration techniques like Linear Coupling \citep{allen2014linear} or Nesterov's estimate sequence \citep{nesterov1983method} do, so obtaining an approximate implicit Euler discretization proves to be a better approach. However, our dynamics are different and in our case we must use tilted-convexity \eqref{eq:quasiquasarconvexity} instead of convexity. We are able to obtain the following discretization coming from an approximate implicit Euler discretization: 
\begin{equation}\label{general_rule_modified_quasar_axgd}
\left\{\begin{array}{l}
    \chii[i]=\frac{\hatgamma[i] \At[i]}{\At[i] \hatgamma[i] +\ai[i+1]/\gammanparam} \xI[i]+\frac{\ai[i+1]/\gammanparam}{\At[i]\hatgamma[i]  + \ai[i+1]/\gammanparam} \nabla \psi^{\astfenchel}(\zi[i]) \ \\
    \zetai[i] = \zi[i]-\frac{\ai[i+1]}{\gammanparam} \nabla \ftilted(\chii[i]) \\
    \xI[i+1]=\frac{\hatgamma[i] \At[i]}{\At[i] \hatgamma[i] +\ai[i+1]/\gammanparam} \xI[i]+\frac{\ai[i+1]/\gammanparam}{\At[i]\hatgamma[i]  + \ai[i+1]/\gammanparam} \nabla \psi^{\astfenchel}(\zetai[i]) \\ 
    \zi[i+1] = \zi[i]-\frac{\ai[i+1]}{\gammanparam} \nabla \ftilted(\xI[i+1]) 
\end{array}\right.
\end{equation}
    where $\hatgamma[i] \in[\gammapparam, 1/\gammanparam]$ is a parameter, $\xInit\in\X$ is an arbitrary point, $\zi[0]=\nabla \psi(\xInit)$ and now $\alphat[t]$ is a discrete measure and $\dott{\alphat}_t$ is a weighted sum of Dirac delta $\bar{\delta}$ functions $\dott{\alphat}_t=\sum_{i=1}^\infty \ai[i] \bar{\delta}(t-(t_0+i-1))$. However, not having convexity, in order to have per-iteration discretization error less than $\hat{\epsilon}/\At[T]$, we require $\hatgamma[i] $ to be such that $\xI[i+1]$ satisfies
\begin{equation}\label{eq:approximate_binary_search_main_paper}
    \ftilted(\xI[i+1])-\ftilted(\xI[i]) \leq \hatgamma[i]  \innp{\nabla \ftilted(\xI[i+1]), \xI[i+1]-\xI[i]} + \hat{\oldepsilon}, 
\end{equation}
where $\hat{\oldepsilon}$ is chosen so that the accumulated discretization error is $<\epsilon/2$, after having performed the steps necessary to obtain an $\epsilon/2$ minimizer. We would like to use \eqref{eq:quasiquasarconvexity} to find such a $\hatgamma[i] $ but we need to take into account that we only know $\xI[i+1]$ a posteriori. Indeed, using \eqref{eq:quasiquasarconvexity} we conclude that setting $\hatgamma[i] $ to $1/\gammanparam$ or $\gammapparam$ then we either satisfy \eqref{eq:approximate_binary_search_main_paper} or there is a point $\hatgamma[i]  \in (\gammapparam, 1/\gammanparam)$ for which $\innp{\nabla \ftilted(\xI[i+1]), \xI[i+1]-\xI[i]}=0$, which satisfies the inequality for $\hat{\oldepsilon}=0$. Then, using gradient Lipschitzness of $\ftilted$, $\Rglobal \geq \dist(\xInit[\notilde], \xastg)$, and boundedness of $\X$ we can guarantee that a binary search finds a point satisfying \eqref{eq:approximate_binary_search_main_paper} in $O(\log (\Ltildegeneric \Rglobal i/\gamma_n\hat{\oldepsilon}))$ iterations. Each iteration of the binary search requires to run \eqref{general_rule_modified_quasar_axgd}, that is, one step of the discretization. Computing the final discretization error, we obtain acceleration after choosing appropriate learning rates $\ai[i]$. \cref{alg:accelerated_gconvex} contains the pseudocode of this algorithm along with the reduction of the problem from minimizing $\F$ to minimizing $\f$. We chose $\psi(\tilde{x}) \defi \frac{1}{2}\norm{\tilde{x}}^2$ as our strongly convex regularizer.

\section{Reductions} \label{sec:reductions}
The construction of reductions proves to be very useful in order to facilitate the design of algorithms in different settings. Moreover, reductions are a helpful tool to infer new lower bounds without extra ad hoc analyses. We present two reductions. We will see in \cref{coroll:acceleration_st_g_convex} and \cref{example:application_of_reduction_to_st_convex} that one can obtain full accelerated methods to minimize smooth and strongly g-convex functions from our accelerated methods for smooth and g-convex functions and vice versa. These are generalizations of some reductions designed to work in the Euclidean space \citep{allen2016optimal, allen2014linear}. The reduction to strongly g-convex functions takes into account the effect of the deformation of the space on the strong convexity of the function $F_y(x) = \dist(x,y)^2/2$, for $x, y \in \M$. It does not entail an extra $\log(1/\epsilon)$ factor. The reduction to g-convexity requires the rate of the algorithm that applies to g-convex functions to be proportional to the squared distance between the initial point and the optimum $\dist(\xInit[\notilde],\xast[\notilde])^2$ or to a bound $\RR^2$. The proofs of the statements in this section can be found in the appendix. We will use $\newtarget{def:time_alg_of_just_convex_for_reduction}{\timens(\cdot)}$ and $\newtarget{def:time_alg_of_strongly_convex_for_reduction}{\time}(\cdot)$ to denote the time algorithms $\Algns$ and $\Alg$ below require, respectively, to perform the tasks we define below.

\begin{theorem}\linktoproof{thm:reduction_to_g_convex}\label{thm:reduction_to_g_convex}
    Let $\M$ be a Riemannian manifold, let $\F:\M\to\R$ be an $\L$-smooth and $\mu$-strongly g-convex function, with a minimizer $\xast[\notilde]$. Suppose we have an algorithm $\newtarget{def:alg_of_just_convex_for_reduction}{\Algns}$ to minimize $\F$, such that for any starting point $\xInit[\notilde]$ such that  $\dist(\xInit[\notilde],\xast[\notilde]) \leq \RR$, it produces a point $\hat{x}_T$ in time $T =\timens(\L, \mu, \RR)$ satisfying $\F(\hat{x}_T)-\F(\xast[\notilde]) \leq \mu \RR^2/4$. Then we can compute an $\epsilon$-minimizer of $\F$ in time $O(\timens(\L, \mu, \RR)\log(\mu/\epsilon))$.
\end{theorem}

\cref{thm:reduction_to_g_convex} implies that if we forget about the strong g-convexity of a function and we treat it as if it is just g-convex then we can run in stages an algorithm designed for optimizing g-convex functions and still achieve acceleration.  The fact that the function is strongly g-convex is only used between stages. We exemplify the power of the reduction by applying it to \cref{alg:accelerated_gconvex}.

\begin{corollary}\linktoproof{coroll:acceleration_st_g_convex}\label{coroll:acceleration_st_g_convex}
    Using \cref{alg:accelerated_gconvex}, we can compute an $\epsilon$-minimizer of a function $\F:\Mk\to\R$ that is $\L$-smooth and $\mu$-strongly g-convex in $\BR$ by using $\bigotilde{\sqrt{\L(\Rglobal+1)/\mu}}$ queries to the gradient oracle.
\end{corollary}

We note that in the strongly convex case, by decreasing the function value by a factor we can guarantee we decrease the distance to $\xast[\notilde]$ by another factor, so we can periodically recenter the geodesic map to reduce the constants produced by the deformations of the geometry, see the proof of \cref{coroll:acceleration_st_g_convex}. Finally, we show the reverse reduction.

\begin{theorem}[\textbf{simplified, cf. \cref{thm:reduction_to_g_st_convex_full_theorem}}]\label{thm:reduction_to_g_st_convex}
    Let $\F:\M\to\R$ be $\L$-smooth and g-convex, and let $\M$ be of bounded sectional curvature. Let $\Delta$ satisfy $\F(\xInit[\notilde])-\F(\xast[\notilde]) \leq \Delta $. Let $\newtarget{def:alg_of_strongly_convex_for_reduction}{\Alg}$ be an algorithm that in time $\hat{T}=\time(\L,\mu,\M, \RR)$ produces $\hat{x}\in\expon{\xInit[\notilde]}(\ball(0, \RR))$ that reduces the gap of an $\L$-smooth and $\mu$-strongly g-convex function $\hat{F}:\M\to\R$, with minimizer in $\expon{\xInit[\notilde]}(\ball(0,\RR))$, by a factor of $1/4$, i.e., $\hat{F}(\hat{x})-\min_{x\in\M}\hat{F}(x) \leq (\hat{F}(\hat{x}_0) - \min_{x\in\M}\hat{F}(x))/4$. Let $T=\lceil\log_2(\Delta/\epsilon)\rceil+1$. Then, we can compute an $\epsilon$-minimizer in time $\sum_{t=0}^{T-1}\time(\L+O(2^{-t}\Delta), O(2^{-t}\Delta), \M, \RR)$.
\end{theorem}

\begin{example}\linktoproof{example:application_of_reduction_to_st_convex}\label{example:application_of_reduction_to_st_convex}
    \normalfont{
    Applying \cref{thm:reduction_to_g_st_convex} to the algorithm in \cref{coroll:acceleration_st_g_convex} we can optimize $\L$-smooth and g-convex functions defined on $\Mk$ with a gradient oracle complexity of $\bigotilde{\sqrt{\L(\Rglobal+1)/\epsilon}}$. 
    }
\end{example}

Note that this reduction cannot be applied to the locally accelerated algorithm in \citep{zhang2018towards}, that we discussed in the related work section. The reduction runs in stages by regularizing each time with a strongly g-convex regularizer whose parameter decreases exponentially until we use a regularizer with $O(\epsilon)$ maximum function value. The local assumption required by the algorithm in \citep{zhang2018towards} on the closeness to the minimum cannot be guaranteed. In \citep{ahn2020nesterov}, the authors give an unconstrained global algorithm whose rates are strictly better than \RGD{}. The reduction could be applied to a constrained version of this algorithm to obtain a method for smooth and g-convex functions defined on manifolds of bounded sectional curvature and whose rates are strictly better than \RGD{}.

\section{Conclusion}

In this work, we proposed an algorithm with the same rates as \AGD{}, for the optimization of smooth and strongly g-convex functions, up to constants and log factors, while previous approaches essentially only reached this for a ball around the minimizer of radius $O((\mu/\L)^{3/4})$. Our algorithm also applies to g-convex functions while previous accelerated algorithms did not apply. We focused on hyperbolic and spherical spaces, that have constant sectional curvature. The study of geometric properties for this is often employed to conclude that a space of bounded sectional curvature satisfies a property that is in between the ones for the cases of constant extremal sectional curvature. Several previous algorithms have been developed for the general case by utilizing this philosophy, for instance \citep{ahn2020nesterov,ferreira2019gradient,DBLP:journals/jota/WangLY15,zhang2016first,zhang2018towards}. In future work, we will attempt to use the techniques and insights developed in this work to give an algorithm with the same rates as \AGD{} for manifolds of bounded sectional curvature.

The key technique of our algorithm is the effective lower bound aggregation. Indeed, lower bound aggregation is the main hurdle to obtain accelerated first-order methods defined on Riemannian manifolds. Whereas the process of obtaining decreasing upper bounds on the function works similarly as in the Euclidean space---the same approach of locally minimizing the upper bound given by the smoothness assumption is used---obtaining adequate lower bounds proves to be a difficult task. We usually want a simple lower bound such that it, or a regularized version of it, can be easily optimized globally. We also want that the lower bound combines the knowledge that the g-convexity or strong g-convexity provides for all the queried points, commonly an average. These Riemannian convexity assumptions provide simple lower bounds, namely linear or quadratic, but each with respect to each of the tangent spaces of the queried points only. The deformations of the space complicate the aggregation of the lower bounds. Our work deals with this problem by finding appropriate lower bounds via the use of a geodesic map and takes into account the deformations incurred to derive a fully accelerated algorithm. We also used other tools for designing the accelerated algorithm. We worked with a relaxation of convexity that allowed to perform a binary search to reduce the discretization error. We had to use an implicit discretization of some accelerated continuous dynamics, since at least the vanilla application of usual approaches like Linear Coupling \citep{allen2014linear} or Nesterov's estimate sequence \citep{nesterov1983method}, that can be seen as a forward Euler discretization of the accelerated dynamics combined with a balancing gradient step \citep{diakonikolas2017approximate}, did not work in our constrained case. We interpret that the difficulty arises from trying to keep the gradient step inside the constraints while being able to compensate for a lower bound that is looser by a constant factor. Moreover, the use a ball optimization oracle proves to be a very useful tool in order to reduce geometric penalties in the convergence rates.

\acks{We thank Mario Lezcano-Casado for helpful discussions on this work. We thank Varun Kanade and Patrick Rebeschini for proofreading of this work. This work was supported by EP/N509711/1 from the EPSRC MPLS division, grant No 2053152.}

\printbibliography[heading=bibintoc] 

\clearpage

\appendix

We divide the appendix in four sections. \cref{app:acceleration} contains the proofs related to the accelerated algorithms, i.e., the proofs of Theorems \ref{thm:acceleration_quasiquasarconvexity}, \ref{thm:riemannian_acceleration}, and \ref{thm:reduction_to_ball_opti}. In \cref{app:reductions} we prove the results related to the reductions in \cref{sec:reductions}. In \cref{app:geometric_results}, we prove the geometric lemmas that take into account the geodesic map $\h$ to obtain relationships between $\F$ and $\f$, namely Lemmas \ref{lemma:deformations}, \ref{prop:bounding_hyperplane} and \ref{lemma:smoothness_of_transformed_function}. Finally, \cref{app:constants} contains a discussion on the constants of our algorithms, on rates of related work and on hardness results. We also show a lower bound on the condition number for any strongly g-convex function defined on $\BR$.

\section{Acceleration. Proofs of Theorems \ref{thm:acceleration_quasiquasarconvexity}, \ref{thm:riemannian_acceleration} and \ref{thm:reduction_to_ball_opti}} \label{app:acceleration}
\citet{diakonikolas2017approximate} developed the \textit{approximate duality gap technique} which is a technique that provides a structure to design and prove first order methods and their guarantees for the optimization of convex problems. We take inspiration from these ideas to apply them to the non-convex problem we have at hand \cref{thm:acceleration_quasiquasarconvexity}, as it was sketched in \cref{subsec:sketch_of_my_axgd_proof}. We start with two basic definitions.
\begin{definition}\label{def:bregman_divergence}
    Given two points $\tilde{x}, \tilde{y}$, we define the Bregman divergence with respect to $\psi(\cdot)$ as
    \[
        \newtarget{def:bregman_divergence}{}{\breg[\psi]} (\tilde{x}, \tilde{y}) \defi \psi(\tilde{x})-\psi(\tilde{y}) - \innp{\nabla \psi(\tilde{y}), \tilde{x}-\tilde{y}}.
    \]
\end{definition}

\begin{definition} \label{def:fenchel_dual}
    Given a closed convex set $\Q$ and a function $\psi:\Q \to \R$, we define the convex conjugate of $\psi$, also known as its Fenchel dual, as the function
    \[
        \psi^{\newtarget{def:asterisk_of_fenchel_dual}{\astfenchel}}(\tilde{z}) = \max_{\tilde{x}\in \Q}\{ \innp{\tilde{z}, \tilde{x}}-\psi(\tilde{x})\}.
    \]
\end{definition}
For simplicity, we will use $\newtarget{def:strongly_convex_regularizer}{\psi}(\tilde{x}) = \frac{1}{2}\norm{\tilde{x}}^2 + i_Q(\tilde{x})$ in \cref{alg:accelerated_gconvex}, but any strongly convex map works. Here $i_Q(x)=0$ if $x \in Q$ and $i_Q(x)=+\infty$ otherwise. The gradient of the Fenchel dual of $\psi(\cdot)$ is $\nabla\psi^{\astfenchel}(\tilde{z}) = \argmin_{\tilde{z}'\in \Q}\{\norm{\tilde{z}'-\tilde{z}}\}$, that is, the Euclidean projection $\newtarget{def:euclidean_projection}{\Pi}_{\Q}(\tilde{z})$ of the point $\tilde{z}$ onto $\Q$. Note that when we apply \cref{thm:acceleration_quasiquasarconvexity} to \cref{thm:riemannian_acceleration} our constraint $\Q$ will be $\X$, that is, a ball centered at $0$ of radius $\Rtilde$, so the projection of a point $\tilde{z}$ outside of $\X$ will be the vector normalization $\Rtilde\tilde{z}/\norm{\tilde{z}}$. Any continuously differentiable strongly convex $\psi$ would work, provided that $\nabla \psi^{\astfenchel}(z)$ is easily computable, preferably in closed form. Note that by the Fenchel-Moreau theorem we have for any such map that $\psi^{\astfenchel\astfenchel} =\psi$.

We recall we assume that $\ftilted$ satisfies tilted-convexity \eqref{eq:quasiquasarconvexity}: 
\begin{align*}
 \begin{aligned}
     \ftilted(\tilde{x}) +  \frac{1}{\gammanparam}\innp{\nabla \ftilted(\tilde{x}), \tilde{y}-\tilde{x}}\leq \ftilted(\tilde{y})  &\ & & {\text{ if } \innp{\nabla \ftilted(\tilde{x}), \tilde{y}-\tilde{x}}}\leq 0, \\
     \ftilted(\tilde{x})+\gammapparam\innp{\nabla \ftilted(\tilde{x}), \tilde{y}-\tilde{x}} \leq \ftilted(\tilde{y})  &\ & & {\text{ if } \innp{\nabla \ftilted(\tilde{x}), \tilde{y}-\tilde{x}}} \geq 0.
   \end{aligned}
\end{align*}

Let $\newtarget{def:step_function_alpha}{\alphat[t]}$ be an increasing function of time $t$. We use Lebesgue-Stieltjes integration and its notation, so that $\int_{0}^t \ftilted(x_\tau)\dott{\alphat}_\tau d\tau =\int_{0}^t \ftilted(x_\tau)d\alphat[\tau]$. We want to work with continuous and discrete approaches in a unified way. Thus, when $\alphat[t]$ is a discrete measure, we have that $\dott{\alphat}_t = \sum_{i=1}^\infty \newtarget{def:discrete_step}{\ai[i]} \bar{\delta}(t-(t_0+i-1))$ is a weighted sum of Dirac delta $\bar{\delta}$ functions. We define $\newtarget{def:integral_of_steps}{\At[t]} \defi \int_{t_0}^t d\alphat[\tau] = \int_{t_0}^t \dott{\alphat}_\tau d\tau$. In discrete time, it is $\At[t] = \sum_{i=1}^{\lfloor t-t_0+1\rfloor} \ai[i] = \alphat[t]$.  In the continuous case note that we have $\alphat[t] -\At[t] = \alphat[t_0]$.

We start defining a continuous method that we discretize with an approximate implementation of the implicit Euler method. Let $\xt[t]$ be the solution obtained by the algorithm at time $t$. We define the duality gap $\newtarget{def:differentiable_gap_bound}{\Gt[t]}\defi \Ut[t]-\Lt[t]$ as the difference between a differentiable upper bound $\Ut[t]$ on the function at the current point and a lower bound on $\ftilted(\xast)$. Since in our case $\ftilted$ is differentiable we use $\newtarget{def:differentiable_upper_bound}{\Ut[t]} \defi \ftilted(\xt[t])$. The idea is to enforce the invariant $\frac{d}{dt}(\alphat[t] \Gt[t]) = 0$, so we have at any time $\ftilted(\xt[t])-\ftilted(\xast)\leq \Gt[t] = \Gt[t_0]\alphat[t_0]/\alphat[t]$. 

Note that for the constrained minimizer $\xast$ of $\ftilted$ and any other point $\tilde{x}\in \Q$, we have $\innp{\nabla \ftilted(\tilde{x}), \xast-\tilde{x} }\leq 0$. Otherwise, we would obtain a contradiction since by tilted-convexity \eqref{eq:quasiquasarconvexity} we would have 
\[
f(\tilde{x}) <\ftilted(\tilde{x})+ \gammapparam\innp{\nabla \ftilted(\tilde{x}), \xast-\tilde{x}} \leq \ftilted(\xast).
\] 
Therefore, in order to define an appropriate lower bound, we will make use of the inequality $\ftilted(\xast) \geq \ftilted(\tilde{x}) + \frac{1}{\gammanparam}\innp{\nabla \ftilted(\tilde{x}), \xast -\tilde{x}}$, for any $\tilde{x}\in  \Q$, which holds true by tilted-convexity \eqref{eq:quasiquasarconvexity}, for $\tilde{y}=\xast$. Combining this inequality for all the points visited by the continuous method we have
\[
    \ftilted(\xast) \geq \frac{\int_{t_{0}}^{t} \ftilted(\xt[\tau]) d \alphat[\tau]}{\At[t]}+\frac{\int_{t_{0}}^{t}\frac{1}{\gammanparam}\innp{\nabla \ftilted(\xt[\tau]), \xast-\xt[\tau]} d \alphat[\tau]}{\At[t]}.
\]
We cannot compute this lower bound, since the right hand side depends on the unknown point $\xast$. We could compute a looser lower bound by taking the minimum over $\tilde{u}\in \Q$ of this expression, substituting $\xast$ by $\tilde{u}$. However, this would make the lower bound be non-differentiable and we could have problems at $t_0$. In order to solve the first problem, we first add a regularizer and then take the minimum over $\tilde{u}\in  \Q$.
\begin{align*} 
\begin{aligned}
f(\xast)+& \frac{\breg[\psi](\xast, \xt[t_0])}{\At[t]} \\
    & \geq \frac{\int_{t_{0}}^{t} \ftilted(\xt[\tau]) d \alphat[\tau]}{\At[t]}+\frac{\min_{\tilde{u} \in  \Q}\left\{\int_{t_{0}}^{t}\frac{1}{\gammanparam}\innp{\nabla \ftilted(\xt[\tau]), \tilde{u}-\xt[\tau] } d \alphat[\tau]+\breg[\psi](\tilde{u}, \xt[t_0])\right\}}{\At[t]}
\end{aligned}
\end{align*}
In order to solve the second problem, we mix this lower bound with the optimal lower bound $\ftilted(\xast)$ with weight $\alphat[t] -\At[t]$ (this is only necessary in continuous time, in discrete time this term is $0$). Not knowing $\ftilted(\xast)$ or $\breg[\psi](\xast, \xt[t_0])$ will not be problematic. Indeed, we only need to guarantee $\frac{d}{dt}(\alphat[t] \Gt[t])=0$. After taking the derivative, these terms will vanish. After rescaling the normalization factor, we finally obtain the lower bound
\begin{align} \label{lower_bound_modified_axgd}
\begin{aligned}
    \ftilted(\xast) \geq \newtarget{def:differentiable_lower_bound}{\Lt[t]} \defi &\frac{\int_{t_0}^t \ftilted(\xt[\tau]) d\alphat[\tau]}{\alphat[t]} + \frac{\min_{\tilde{u}\in \Q}\left\{\int_{t_0}^t \innp{\frac{1}{\gammanparam}\nabla \ftilted(\xt[\tau]), \tilde{u}-\xt[\tau]} d\alphat[\tau] + \breg[\psi](\tilde{u},\xt[t_0])\right\}}{\alphat[t]} \\
    & \quad + \frac{(\alphat[t]-\At[t])\ftilted(\xast)-\breg[\psi](\xast, \xt[t_0])}{\alphat[t]}.
\end{aligned}
\end{align}

Let $\zt[t] = \nabla \psi(\xt[t_0]) - \int_{t_0}^t \frac{1}{\gammanparam} \nabla \ftilted(\xt[\tau])d\alphat[\tau]$. Then, by \cref{grad_of_fenchel_dual}, we can compute the optimum $\tilde{u}\in \Q$ above as
\begin{equation}\label{eq:fenchel_dual_applied_to_z}
        \nabla \psi^{\astfenchel}(\zt[t]) = \argmin_{\tilde{u}\in \Q} \left\{\int_{t_0}^t\innp{\frac{1}{\gammanparam}\nabla \ftilted(\xt[\tau]), \tilde{u}-\xt[\tau]} d\alphat[\tau] + \breg[\psi](\tilde{u},\xt[t_0])\right\}.
\end{equation}
Recalling $\Ut[t] = \ftilted(\xt[t])$ and using \eqref{lower_bound_modified_axgd} and \eqref{eq:fenchel_dual_applied_to_z} we obtain:
\begin{align*} 
\begin{aligned}
    \frac{d}{dt}(\alphat[t] \Gt[t]) &= \frac{d}{dt} (\alphat[t] \ftilted(\xt[t])) - \dott{\alphat[t]}\ftilted(\xt[t]) - \dott{\alphat}_t \frac{1}{\gammanparam}\innp{\nabla \ftilted(\xt[t]), \nabla \psi^{\astfenchel}(\zt[t])-\xt[t]} \\
    &=\frac{1}{\gammanparam}\innp{\nabla \ftilted(\xt[t]), \gammanparam \alphat[t] \dott{\xt[]}_t - \dott{\alphat}_t(\nabla \psi^{\astfenchel}(\zt[t])-\xt[t])}.
\end{aligned}
\end{align*}
Thus, to satisfy the invariant $\frac{d}{dt}(\alphat[t] \Gt[t])=0$, it is enough to set $\gammanparam \alphat[t] \dott{\xt[]}_t = \dott{\alphat[t]}(\nabla \psi^{\astfenchel}(\zt[t])-\xt[t])$, yielding the following continuous accelerated dynamics
\begin{align}\label{appendix_modified_accelerated_continuous_dynamics}
\begin{aligned}
    \newtarget{def:iterate_z_continuous}{\dott{\zt[]}_{t}}&=-\frac{1}{\gammanparam}\dott{\alphat}_t \nabla \ftilted(\xt[t]), \\
    \newtarget{def:iterate_x_continuous}{\dott{\xt[]}_{t}}&=\frac{1}{\gammanparam}\dott{\alphat}_t \frac{\nabla \psi^{\astfenchel}(\zt[t])-\xt[t]}{\alphat[t]},\\
    \zt[t_0]&=\nabla \psi(\xt[t_0]),\\
    \xt[t_0] &\in  \Q \text { is an arbitrary initial point. }
\end{aligned}
\end{align}

Now we proceed to discretize the dynamics, so from now on we will use a discrete measure $\alphat[t]$, as we described above. We set $t_0$ to $1$. Let $ \newtarget{def:discretization_error}{\Ei[i+1]} \defi \At[i+1]\Gt[i+1]-\At[i]\Gt[i]$ be the discretization error. Then we have
\[
    \Gt[t] = \frac{\At[1]}{\At[t]}\Gt[1] + \frac{\sum_{i=1}^{t-1} \Ei[i+1]}{\At[t]}.
\]
\begin{lemma}\label{lemma:discretization_error_of_modified_axgd}
    If we have 
    \begin{equation}\label{eq:approximate_multiplied_convexity}
    \ftilted(\xI[i+1])-\ftilted(\xI[i]) \leq \hatgamma[i]  \innp{\nabla \ftilted(\xI[i+1]), \xI[i+1]-\xI[i]} + \hatepsilon[i],
    \end{equation}
    for some $\hatgamma[i], \hatepsilon[i] \geq 0$, then the discretization error satisfies
    \[
        \Ei[i+1] \leq \innp{\nabla \ftilted(\xI[i+1]), (\At[i]\hatgamma[i]  +\frac{\ai[i+1]}{\gammanparam})\xI[i+1]-\hatgamma[i]  \At[i] \xI[i]-\frac{\ai[i+1]}{\gammanparam}\nabla \psi^{\astfenchel}(\zi[i+1]))} - \breg[\psi^{\astfenchel}](\zi[i], \zi[i+1]) + \At[i]\hatepsilon[i].
    \]
\end{lemma}

\begin{proof}
    In a similar way to \citep{diakonikolas2017accelerated}, we could compute the discretization error as the difference between the gap and the gap computed allowing continuous integration rules in the integrals that it contains. However, we will directly bound $\Ei[i+1]$ as $\At[i+1]\Gt[i+1]-\At[i]\Gt[i]$ instead. Recall that in discrete time we have $\alpha_i = A_i$ so the definition of the lower bound in discrete time becomes the following, by combining \eqref{lower_bound_modified_axgd} and \eqref{eq:fenchel_dual_applied_to_z}:
\begin{align*}
\begin{aligned}
    \Lt[i] &= \sum_{j=1}^{i} \ai[j] \ftilted(\xI[j])  + \sum_{j=1}^{i} \innp{\frac{\ai[j]}{\gammanparam} \nabla \ftilted(\xI[j]), \nabla\psi^{\astfenchel}(\zi[i])-\xI[j]}  + \breg[\psi](\nabla \psi^{\astfenchel}(\zi[i]),\xt[t_0]) - \breg[\psi](\xast, \xt[t_0]).
\end{aligned}
\end{align*}

    Hence, using the definition of $\Gt[i], \Ut[i], \Lt[i]$ we have
\begingroup
\allowdisplaybreaks
\begin{align*}
\begin{aligned}
    \At[i+1]&\Gt[i+1]-\At[i]\Gt[i] \\
    &= (\At[i+1]\ftilted(\xI[i+1])-\At[i] \ftilted(\xI[i])) -\At[i+1]\Lt[i+1]+\At[i]\Lt[i] \\
    & \circled{1}[=] (\At[i]\ftilted(\xI[i+1])- \At[i]\ftilted(\xI[i]) + \ai[i+1]\ftilted(\xI[i+1]))  \\
    &\quad -\sum_{j=1}^{i+1} \ai[j] \ftilted(\xI[j]) - \sum_{j=1}^{i+1} \frac{\ai[j]}{\gammanparam} \innp{\nabla \ftilted(\xI[j]), \nabla \psi^{\astfenchel}(\zi[i+1])-\xI[j]} - \breg[\psi](\nabla \psi^{\astfenchel}(\zi[i+1]), \xt[t_0]) \\
    &\quad +\sum_{j=1}^{i} \ai[j] \ftilted(\xI[j]) + \sum_{j=1}^{i} \frac{\ai[j]}{\gammanparam} \innp{\nabla \ftilted(\xI[j]), \nabla \psi^{\astfenchel}(\zi[i])-\xI[j]} + \breg[\psi](\nabla \psi^{\astfenchel}(\zi[i]), \xt[t_0])  \\
    & \circled{2}[=] \At[i](\ftilted(\xI[i+1])-\ftilted(\xI[i])) -\innp{\frac{\ai[i+1]}{\gammanparam}\nabla \ftilted(\xI[i+1]), \nabla \psi^{\astfenchel}(\zi[i+1])-\xI[i+1]} \\
    &\quad+ \sum_{j=1}^i\innp{\frac{\ai[j]}{\gammanparam}\nabla \ftilted(\xI[j]), \nabla \psi^{\astfenchel}(\zi[i])-\nabla \psi^{\astfenchel}(\zi[i+1])}  \\
     &\quad [- \innp{\nabla \psi(\xt[t_0]), \nabla \psi^{\astfenchel}(\zi[i])-\nabla \psi^{\astfenchel}(\zi[i+1])} + \psi(\nabla\psi^{\astfenchel}(\zi[i])) -\psi(\nabla \psi^{\astfenchel}(\zi[i+1]))] \\
     & \circled{3}[=]  \At[i](\ftilted(\xI[i+1])-\ftilted(\xI[i])) -\innp{\frac{\ai[i+1]}{\gammanparam}\nabla \ftilted(\xI[i+1]), \nabla \psi^{\astfenchel}(\zi[i+1])-\xI[i+1]} -\breg[\psi^{\astfenchel}](\zi[i], \zi[i+1]) \\
     & \circled{4}[\leq] \innp{\nabla \ftilted(\xI[i+1]), (\At[i]\hatgamma[i]  +\frac{\ai[i+1]}{\gammanparam})\xI[i+1]-\hatgamma[i]  \At[i] \xI[i]-\frac{\ai[i+1]}{\gammanparam}\nabla \psi^{\astfenchel}(\zi[i+1])} - \breg[\psi^{\astfenchel}](\zi[i], \zi[i+1]) + \At[i]\hatepsilon[i].
\end{aligned}
\end{align*}
\endgroup

    In $\circled{1}$ we write down the definitions of $\Lt[i+1]$ and $\Lt[i]$ and split the first summand so it is clear that in $\circled{2}$ we cancel all the $\ai[j]\ftilted(\xI[j])$. In $\circled{2}$ we also cancel some terms involved in the inner products, we write the definitions of the Bregman divergences and cancel some of their terms. For equality $\circled{3}$, we recall $\zi[i] = \nabla \psi(\xt[t_0]) - \sum_{j=1}^i \frac{\ai[j]}{\gammanparam}\nabla \ftilted(\xI[j])$ so we use this fact and $\psi^{\astfenchel}(\tilde{z})= \innp{\nabla \psi^{\astfenchel}(\tilde{z}), \tilde{z}}-\psi(\nabla\psi^{\astfenchel}(\tilde{z}))$ (which holds by \cref{grad_of_fenchel_dual}) for $\tilde{z}=\zi[i]$ and $\tilde{z}=\zi[i+1]$ to conclude that the last two lines equal $-\breg[\psi^{\astfenchel}](\zi[i], \zi[i+1])$. Inequality $\circled{4}$ uses \eqref{eq:approximate_multiplied_convexity}.
\end{proof}

We show now how to cancel out the discretization error by an approximate implementation of implicit Euler discretization of \eqref{appendix_modified_accelerated_continuous_dynamics}. Note that we need to take into account the tilted-convexity assumption \eqref{eq:quasiquasarconvexity} instead of the usual convexity assumption. According to the previous lemma, we can set $\xI[i+1]$ so that the right hand side of the inner product in the bound of $\Ei[i+1]$ is $0$. Assume for the moment, that the point $\xI[i+1]$ we are going to compute satisfies the assumption of the previous lemma for some $\hatgamma[i]  \in [\gammapparam, 1/\gammanparam]$.  Thus, the implicit equation that defines the ideal method we would like to have is
\begin{align*}
\begin{aligned}
    \xI[i+1] = \frac{\hatgamma[i] \At[i]}{\At[i] \hatgamma[i] +\ai[i+1]/\gammanparam}\xI[i] + \frac{\ai[i+1]/\gammanparam}{\At[i]\hatgamma[i]  + \ai[i+1]/\gammanparam} \nabla \psi^{\astfenchel}(\zi[i]-\frac{\ai[i+1]}{\gammanparam} \nabla \ftilted(\xI[i+1])).
\end{aligned}
\end{align*}
Note that $\xI[i+1]$ is a convex combination of the other two points so it stays in $\Q$. Indeed, the initial point is in $\Q$ and by \eqref{eq:fenchel_dual_applied_to_z} we have that $\nabla \psi^{\astfenchel}(\zi[j]) \in \Q$ for all $j \geq 0$. However this method is implicit and possibly computationally expensive to implement. Nonetheless, two steps of a fixed point iteration procedure of this equation will be enough to have discretization error that is bounded by the term $\At[i]\hatepsilon[i]$: the last term of our bound. The error in the bound of $\Ei[i+1]$ that the inner product incurs is compensated by the Bregman divergence term. In such a case, the equations of this method become, for $i\geq 0$:
\begin{equation}\label{appendix_general_rule_modified_quasar_axgd}
\left\{\begin{array}{l}
    \newtarget{def:iterate_chi}{\chii[i]}=\frac{\hatgamma[i] \At[i]}{\At[i] \hatgamma[i] +\ai[i+1]/\gammanparam} \xI[i]+\frac{\ai[i+1]/\gammanparam}{\At[i]\hatgamma[i]  + \ai[i+1]/\gammanparam} \nabla \psi^{\astfenchel}(\zi[i]) \\
    \newtarget{def:iterate_zeta}{\zetai[i]} = \zi[i]-\frac{\ai[i+1]}{\gammanparam} \nabla \ftilted(\chii[i]) \\
    \newtarget{def:iterate_x}{\xI[i+1]}=\frac{\hatgamma[i] \At[i]}{\At[i] \hatgamma[i] +\ai[i+1]/\gammanparam} \xI[i]+\frac{\ai[i+1]/\gammanparam}{\At[i]\hatgamma[i]  + \ai[i+1]/\gammanparam} \nabla \psi^{\astfenchel}(\zetai[i]) \\
    \newtarget{def:iterate_z}{\zi[i+1]} = \zi[i]-\frac{\ai[i+1]}{\gammanparam} \nabla \ftilted(\xI[i+1]) 
\end{array}\right.
\end{equation}
We prove now that this indeed leads to an accelerated algorithm. After this, we will show that we can perform a binary search at each iteration, to ensure that even if we do not know $\xI[i+1]$ a priori, we can compute a $\newtarget{def:tilted_parameter_at_iteration_i}{\hatgamma[i]}  \in [\gammapparam, 1/\gammanparam]$ satisfying assumption \eqref{eq:approximate_multiplied_convexity}. This will only add a log factor to the overall complexity.

\begin{lemma}\label{lemma:approximate_implicit_method}
    Consider the method given in \eqref{appendix_general_rule_modified_quasar_axgd}, starting from an arbitrary point $\xInit \in  \Q$ with $\zi[0] = \nabla \psi(\xInit)$ and $\At[0]=0$. Assume we can compute $\hatgamma[i] $ such that $\xI[i+1]$ satisfies \eqref{eq:approximate_multiplied_convexity}. Then, the error from \cref{lemma:discretization_error_of_modified_axgd} is bounded by
\begin{align*}
\begin{aligned}
    \Ei[i+1] &\leq  \frac{\ai[i+1]}{\gammanparam}\innp{\nabla \ftilted(\xI[i+1]) - \nabla \ftilted(\chii[i]), \nabla \psi^{\astfenchel}(\zetai[i])-\nabla \psi^{\astfenchel}(\zi[i+1])} - \breg[\psi^{\astfenchel}](\zetai[i],\zi[i+1]) - \breg[\psi^{\astfenchel}](\zi[i],\zetai[i]) + \At[i]\hatepsilon[i].\\
\end{aligned}
\end{align*}
\end{lemma}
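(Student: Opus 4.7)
The plan is to start from the bound already proved in Lemma \ref{lemma:discretization_error_of_modified_axgd}, use the algorithm's definition of $\tilde{x}_{i+1}$ to collapse the long vector inside the first inner product, and then split the Bregman divergence $D_{\psi^\ast}(\tilde{z}_i,\tilde{z}_{i+1})$ into the two target divergences via a three-point identity. The three-point identity will generate a cross term that, using the definition $\tilde{\zeta}_i = \tilde{z}_i - (a_{i+1}/\gamman)\nabla f(\tilde{\chi}_i)$, matches up exactly with a term I add and subtract to introduce the gradient difference $\nabla f(\tilde{x}_{i+1}) - \nabla f(\tilde{\chi}_i)$.

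First I would substitute into the bound from Lemma \ref{lemma:discretization_error_of_modified_axgd} the update rule $(A_i\hat{\gamma}_i + a_{i+1}/\gamman)\tilde{x}_{i+1} = \hat{\gamma}_i A_i\tilde{x}_i + (a_{i+1}/\gamman)\nabla\psi^\ast(\tilde{\zeta}_i)$ from \eqref{appendix_general_rule_modified_quasar_axgd}. Plugging this in, the vector on the right of the inner product simplifies, and the whole bound becomes
\[
    E_{i+1} \leq \frac{a_{i+1}}{\gamman}\innp{\nabla f(\tilde{x}_{i+1}), \nabla\psi^\ast(\tilde{\zeta}_i) - \nabla\psi^\ast(\tilde{z}_{i+1})} - D_{\psi^\ast}(\tilde{z}_i,\tilde{z}_{i+1}) + A_i\hat{\epsilon}_i.
\]

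Next, I add and subtract $\nabla f(\tilde{\chi}_i)$ inside the inner product, splitting it as the desired term $\frac{a_{i+1}}{\gamman}\innp{\nabla f(\tilde{x}_{i+1}) - \nabla f(\tilde{\chi}_i), \nabla\psi^\ast(\tilde{\zeta}_i) - \nabla\psi^\ast(\tilde{z}_{i+1})}$ plus a leftover $\frac{a_{i+1}}{\gamman}\innp{\nabla f(\tilde{\chi}_i), \nabla\psi^\ast(\tilde{\zeta}_i) - \nabla\psi^\ast(\tilde{z}_{i+1})}$. I then apply the three-point identity for Bregman divergences with $\phi = \psi^\ast$, $a=\tilde{z}_i$, $b=\tilde{\zeta}_i$, $c=\tilde{z}_{i+1}$, which gives
\[
    D_{\psi^\ast}(\tilde{z}_i,\tilde{z}_{i+1}) = D_{\psi^\ast}(\tilde{z}_i,\tilde{\zeta}_i) + D_{\psi^\ast}(\tilde{\zeta}_i,\tilde{z}_{i+1}) + \innp{\nabla\psi^\ast(\tilde{\zeta}_i) - \nabla\psi^\ast(\tilde{z}_{i+1}), \tilde{z}_i - \tilde{\zeta}_i}.
\]
Using $\tilde{z}_i - \tilde{\zeta}_i = (a_{i+1}/\gamman)\nabla f(\tilde{\chi}_i)$, the cross term here is precisely the leftover inner product from the previous step, so the two cancel, leaving exactly the claimed bound.

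There is no real obstacle beyond careful bookkeeping; the mild subtlety is recognizing that the three-point identity (verified by a direct expansion of the three Bregman terms) is the right tool for converting the single divergence $D_{\psi^\ast}(\tilde{z}_i,\tilde{z}_{i+1})$ into the pair $D_{\psi^\ast}(\tilde{z}_i,\tilde{\zeta}_i) + D_{\psi^\ast}(\tilde{\zeta}_i,\tilde{z}_{i+1})$ that can later be combined with smoothness of $f$ to absorb the gradient-difference inner product. This is where the approximate implicit Euler structure pays off: the extra divergence $D_{\psi^\ast}(\tilde{z}_i,\tilde{\zeta}_i)$ compensates exactly for the lookahead error incurred by replacing the implicit gradient $\nabla f(\tilde{x}_{i+1})$ with the explicit evaluation at $\tilde{\chi}_i$ in the first fixed-point iterate.
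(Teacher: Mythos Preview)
Your proposal is correct and follows essentially the same approach as the paper's proof: both start from the bound of \cref{lemma:discretization_error_of_modified_axgd}, use the third update rule in \eqref{appendix_general_rule_modified_quasar_axgd} to collapse the inner-product argument, add and subtract $\nabla f(\tilde{\chi}_i)$, and then apply the three-point (``triangle'') identity for Bregman divergences together with $\tilde{z}_i-\tilde{\zeta}_i=(a_{i+1}/\gamman)\nabla f(\tilde{\chi}_i)$ to obtain the cancellation.
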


\begin{proof}
    Using \cref{lemma:discretization_error_of_modified_axgd} and the third line of \eqref{appendix_general_rule_modified_quasar_axgd} we have
    \begin{align*}
    \begin{aligned}
        \Ei[i+1] -\At[i]\hatepsilon[i] &\leq  \frac{\ai[i+1]}{\gammanparam}\innp{\nabla \ftilted(\xI[i+1]), \nabla \psi^{\astfenchel}(\zetai[i])-\nabla \psi^{\astfenchel}(\zi[i+1])} - \breg[\psi^{\astfenchel}](\zi[i],\zi[i+1])\\
        &\leq  \frac{\ai[i+1]}{\gammanparam}\innp{\nabla \ftilted(\xI[i+1]) - \nabla \ftilted(\chii[i])+ \nabla \ftilted(\chii[i]), \nabla \psi^{\astfenchel}(\zetai[i])-\nabla \psi^{\astfenchel}(\zi[i+1])}  - \breg[\psi^{\astfenchel}](\zi[i],\zi[i+1])\\
    \end{aligned}
    \end{align*}
    By the definition of $\zetai[i]$ we have $(\ai[i+1]/\gammanparam) \nabla \ftilted(\chii[i]) = \zi[i] - \zetai[i]$. Using this fact and the triangle equality of Bregman divergences \cref{prop:triangle_inequality_of_bregman_div}, we obtain
    \begin{align*}
    \begin{aligned}
        \frac{\ai[i+1]}{\gammanparam}\innp{\nabla \ftilted(\chii[i]), \nabla \psi^{\astfenchel}(\zetai[i])&-\nabla \psi^{\astfenchel}(\zi[i+1])} = \innp{\zi[i]-\zetai[i], \nabla \psi^{\astfenchel}(\zetai[i])-\nabla \psi^{\astfenchel}(\zi[i+1])}\\
        &= \breg[\psi^{\astfenchel}](\zi[i], \zi[i+1]) - \breg[\psi^{\astfenchel}](\zetai[i], \zi[i+1]) - \breg[\psi^{\astfenchel}](\zi[i], \zetai[i]).
    \end{aligned}
    \end{align*}
    The lemma follows after combining these two equations.
\end{proof}

\begin{theorem}\label{thm:accelerated_axgd_modified}
    Let $\Q$ be a closed convex set of diameter $D$. 
    Let $f: \Q \to \R$ be a tilted-convex function with constants $\gammanparam, \gammapparam$ with $\Ltildegeneric$-Lipschitz gradient. Assume there is a point $\xast \in \Q$ such that $\nabla \ftilted(\xast) = 0$. Let $\psi: \Q \to \R$ be a $\newtarget{def:strong_convexity_of_regularizer}{\stcvxpsi}$-strongly convex map. Let $\xI[i], \zi[i], \chii[i], \zetai[i]$ be updated according to \eqref{appendix_general_rule_modified_quasar_axgd}, for $i\geq 0$ starting from an arbitrary initial point $\xInit \in  \Q$ with $\zi[0] = \nabla \psi(\xInit)$ and $\At[0]=0$, assuming we can find $\hatgamma[i] $ at each iteration satisfying \eqref{eq:approximate_multiplied_convexity}. If $\Ltildegeneric \ai[i+1][2]/\gammanparam\stcvxpsi \leq \ai[i+1] + \At[i]\gammanparam\gammapparam$, then for all $\T\geq 1$ we have 
    \[
        \ftilted(\xI[\T])-\ftilted(\xast) \leq \frac{\breg[\psi](\xast, \chii[0])}{\At[\T]} + \sum_{i=1}^{\T-1} \frac{\At[i]\hatepsilon[i]}{\At[\T]}.
    \]
    In particular, if $\ai[i] = \frac{i}{2} \frac{\stcvxpsi}{\Ltildegeneric} \gammanparam[2] \gammapparam$, $\psi(\tilde{x}) =\frac{\stcvxpsi}{2}\norm{\tilde{x}}^2$, $\newtarget{def:accuracy_of_binary_search}{\hatepsilon[i]}=\frac{\At[\T]\epsilon}{2(\T-1) \At[i]}$ and $\newtarget{def:total_number_of_iterations_T}{\T}=\left\lceil\sqrt{4\Ltildegeneric\norm{\xInit-\xast}^2/(\gammanparam[2]\gammapparam\epsilon)}\right\rceil= O(\sqrt{\Ltildegeneric/(\gammanparam[2]\gammapparam\epsilon)})$  then 
    \[
        \ftilted(\xI[\T])-\ftilted(\xast)  < \epsilon.
    \]
\end{theorem}

\begin{proof}
    We bound the right hand side of the discretization error given by \cref{lemma:approximate_implicit_method}. Define $a = \norm{\nabla \psi^{\astfenchel}(\zetai[i])-\nabla \psi^{\astfenchel}(\zi[i+1])}$ and $b=\norm{\nabla \psi^{\astfenchel}(\zetai[i])-\nabla \psi^{\astfenchel}(\zi[i])}$. We have
    \begin{align*}
    \begin{aligned}
        \Ei[i+1] -\At[i]\hatepsilon[i]&\circled{1}[\leq]  \frac{\ai[i+1]}{\gammanparam}\innp{\nabla \ftilted(\xI[i+1]) - \nabla \ftilted(\chii[i]), \nabla \psi^{\astfenchel}(\zetai[i])-\nabla \psi^{\astfenchel}(\zi[i+1])} - \breg[\psi^{\astfenchel}](\zetai[i],\zi[i+1]) - \breg[\psi^{\astfenchel}](\zi[i],\zetai[i])\\
        &\circled{2}[\leq]  \frac{\ai[i+1]}{\gammanparam}\Ltildegeneric\norm{\xI[i+1]-\chii[i]}\cdot a - \breg[\psi^{\astfenchel}](\zetai[i],\zi[i+1]) - \breg[\psi^{\astfenchel}](\zi[i],\zetai[i])\\
        &\circled{3}[\leq]  \frac{\ai[i+1]}{\gammanparam}\Ltildegeneric\norm{\xI[i+1]-\chii[i]}\cdot a - \frac{\stcvxpsi}{2}(a^2+b^2)\\
        &\circled{4}[\leq]  \frac{\ai[i+1][2]/\gammanparam[2]}{\At[i] \hatgamma[i]  + \ai[i+1]/\gammanparam}\Ltildegeneric\cdot ab - \frac{\stcvxpsi}{2}(a^2+b^2)\\
        &\circled{5}[\leq]  ab \left(\frac{\ai[i+1][2]/\gammanparam[2]}{\At[i] \hatgamma[i]  + \ai[i+1]/\gammanparam}\Ltildegeneric - \stcvxpsi\right).\\
    \end{aligned}
    \end{align*}
    Here $\circled{1}$ follows from \cref{lemma:approximate_implicit_method}, $\circled{2}$ uses the Cauchy-Schwartz inequality and gradient Lipschitzness. In $\circled{3}$, we used \cref{prop:bounding_breg_div_by_norm_of_difference}, and $\circled{4}$ uses the fact that by the definition of the method \eqref{appendix_general_rule_modified_quasar_axgd} we have $\xI[i+1]-\chii[i] = \frac{\ai[i+1]/\gammanparam}{\At[i] \hatgamma[i]  + \ai[i+1]/\gammanparam} (\nabla \psi^{\astfenchel}(\zetai[i])-\nabla \psi^{\astfenchel}(\zi[i]))$. Finally $\circled{5}$ uses $-(a^2+b^2) \leq -2ab$, which comes from $(a-b)^2 \geq 0$. By the previous inequality, if we want $\Ei[i+1] \leq \At[i]\hatepsilon[i]$, it is enough to guarantee the right hand side of the last expression is $\leq 0$ which is implied by
    \begin{equation}\label{eq:assumption_on_the_learning_rate_quasi_quasar_convexity}
        \frac{\Ltildegeneric}{\stcvxpsi \gammanparam}\ai[i+1][2] \leq \ai[i+1] + \At[i] \gammanparam \gammapparam,
    \end{equation}
    since $\gammapparam \leq \hatgamma[i] $. And this is the assumption we made in the theorem. By inspection, if we use the value in the second part of the statement of the theorem $\ai[i] = \frac{i}{2}\cdot \frac{\stcvxpsi}{\Ltildegeneric}\cdot \gammanparam[2]\gammapparam$ into the previous inequality and noting that $\At[i] =\frac{i(i+1)}{4} \cdot \frac{\stcvxpsi}{\Ltildegeneric}\cdot \gammanparam[2]\gammapparam $ we prove that the previous inequality is satisfied:
    \begin{align*}
    \begin{aligned}
        \frac{\Ltildegeneric}{\stcvxpsi\gammanparam} \ai[i+1][2] &= \frac{(i+1)^2}{4}\cdot \frac{\stcvxpsi}{\Ltildegeneric}\cdot \gammanparam[3]\gammapparam[2] \\
        & \leq \left(\frac{i+1}{2} + \frac{i(i+1)}{4}\right) \frac{\stcvxpsi}{\Ltildegeneric}\cdot \gammanparam[3]\gammapparam[2]  \\
        & \leq \frac{i+1}{2} \frac{\stcvxpsi}{\Ltildegeneric}\cdot \gammanparam[2]\gammapparam + \frac{i(i+1)}{4} \frac{\stcvxpsi}{\Ltildegeneric}\cdot \gammanparam[3]\gammapparam[2] \\
        &= \ai[i+1] + \At[i] \gammanparam\gammapparam.
    \end{aligned}
    \end{align*}
    So this choice, and in particular any choice that satisfies \eqref{eq:assumption_on_the_learning_rate_quasi_quasar_convexity}, guarantees discretization error $\Ei[i+1] \leq \At[i]\hatepsilon[i]$. By the definition of $\Gt[i]$ and $\Ei[i]$ we have 
    \[
        \ftilted(\xI[\T])-\ftilted(\xast) \leq \frac{\At[1]\Gt[1]}{\At[\T]} + \sum_{i=1}^{\T-1} \frac{\At[i]\hatepsilon[i]}{\At[\T]} 
    \]
    So it only remains to bound the initial gap $\Gt[1]$. In order to do this, we note that the initial conditions and the method imply the following computation of the first points, from $\xInit \in  \Q$, which is an arbitrary initial point:
\begin{equation}
\left\{\begin{array}{l}
        \zi[0] = \nabla \psi(\xInit) \\
        \chii[0] = \frac{\hat{\gamma}_0\At[0]}{\At[0] \hat{\gamma}_0+\ai[1]/\gammanparam} \xInit + \frac{\ai[1]/\gammanparam}{\At[0] \hat{\gamma}_0+\ai[1]/\gammanparam} \nabla \psi^{\astfenchel}(\zi[0]) = \nabla\psi^{\astfenchel}(\nabla \psi(\xInit)) = \xInit \\
        \zetai[0] = \zi[0] -\frac{\ai[1]}{\gammanparam} \nabla \ftilted(\chii[0]) = \zi[0] -\frac{\ai[1]}{\gammanparam} \nabla \ftilted(\xInit)\\
        \xI[1] = \frac{\hat{\gamma}_0\At[0]}{\At[0] \hat{\gamma}_0+\ai[1]/\gammanparam} \xInit + \frac{\ai[1]/\gammanparam}{\At[0] \hat{\gamma}_0+\ai[1]/\gammanparam} \nabla \psi^{\astfenchel}(\zetai[0])  =  \nabla \psi^{\astfenchel}(\zetai[0])
\end{array}\right.
\end{equation}
    We have used $\At[0]=0$. Note this first iteration does not depend on $\hat{\gamma}_0$. Also, by using this discretization we start at $\xInit$ so we modify the definition of the lower bound \eqref{lower_bound_modified_axgd} so the regularizer added measures the distance from $\xInit$. This change of $\xt[t_0]$ to $\xInit =\chii[0]$ only changes the initial gap. Thus, the first lower bound computed is
    \begin{align*}
    \begin{aligned}
        \Lt[1] = \ftilted(\xI[1]) + \frac{1}{\gammanparam}\innp{\nabla \ftilted(\xI[1]), \nabla \psi^{\astfenchel}(\zi[1])-\xI[1]}+\frac{1}{\At[1]} \breg[\psi](\nabla \psi^{\astfenchel}(\zi[1]), \chii[0]) - \frac{1}{\At[1]} \breg[\psi](\xast, \chii[0]).
    \end{aligned}
    \end{align*}
    Using $\ai[1]=\At[1]$, $\xI[1] = \nabla \psi^{\astfenchel} (\zetai[0])$, $(\ai[1]/\gammanparam) \nabla \ftilted(\chii[0])=\zi[0]-\zetai[0]$, and the triangle equality for Bregman divergences \cref{prop:triangle_inequality_of_bregman_div} we obtain
    \begin{align}\label{bounding_g_1_equation_1}
    \begin{aligned}
        \frac{1}{\gammanparam}\innp{\nabla \ftilted(\chii[0]), \nabla \psi^{\astfenchel}(\zi[1])-\xI[1]} &= \frac{1}{\At[1]} \innp{\zi[0]-\zetai[0], \nabla \psi^{\astfenchel}(\zi[1]) - \nabla \psi^{\astfenchel} (\zetai[0])} \\
    &=\frac{1}{\At[1]}\left(\breg[\psi^{\astfenchel}](\zi[0],\zetai[0])- \breg[\psi^{\astfenchel}](\zi[0],\zi[1]) +\breg[\psi^{\astfenchel}](\zetai[0],\zi[1])\right).
    \end{aligned}
    \end{align}
    On the other hand, by the gradient Lipschitzness $\ftilted$ and the initial condition we have
    \begin{equation}\label{bounding_g_1_equation_2}
        \frac{1}{\gammanparam}\innp{\nabla \ftilted(\xI[1])-\nabla \ftilted(\chii[0]), \nabla \psi^{\astfenchel}(\zi[1])-\xI[1]} \geq -\frac{\Ltildegeneric}{\gammanparam} \norm{\nabla \psi^{\astfenchel}(\zetai[0])-\chii[0]} \norm{\nabla \psi^{\astfenchel}(\zi[1])-\xI[1]}.
    \end{equation}
    We can now finally bound $\Gt[1]$:
    \begin{align*}
    \begin{aligned}
        \Gt[1] &\circled{1}[\leq] \frac{\Ltildegeneric}{\gammanparam} \norm{\nabla \psi^{\astfenchel}(\zetai[0])-\chii[0]} \cdot \norm{\nabla \psi^{\astfenchel} (\zi[1])-\xI[1]} \\
            & \quad -\frac{1}{\At[1]} \left(\breg[\psi^{\astfenchel}](\zi[0],\zetai[0])+\breg[\psi^{\astfenchel}](\zetai[0], \zi[1])\right) + \frac{1}{\At[1]} \breg[\psi](\xast, \chii[0]) \\
            & \circled{2}[\leq] \frac{\Ltildegeneric}{\gammanparam} \norm{\nabla \psi^{\astfenchel}(\zetai[0])-\chii[0]} \cdot \norm{\nabla \psi^{\astfenchel}(\zi[1])-\xI[1]} \\
            &\quad -\frac{\stcvxpsi}{2\At[1]}\left(\norm{\nabla \psi^{\astfenchel}(\zetai[0])-\chii[0]}^2 +\norm{\nabla \psi^{\astfenchel}(\zi[1])-\xI[1]}^2\right) + \frac{1}{\At[1]}\breg[\psi](\xast, \chii[0]) \\
            & \circled{3}[\leq] \norm{\nabla \psi^{\astfenchel}(\zetai[0])-\chii[0]} \cdot \norm{\nabla \psi^{\astfenchel}(\zi[1])-\xI[1]}\left(\frac{\Ltildegeneric}{\gammanparam}-\frac{\stcvxpsi}{\At[1]} \right) + \frac{1}{\At[1]}\breg[\psi](\xast, \chii[0]) \\
            & \circled{4}[\leq] \frac{1}{\At[1]} \breg[\psi](\xast, \chii[0]).
    \end{aligned}
    \end{align*}
    We used in $\circled{1}$ the definition of $\Gt[1]=\Ut[1]-\Lt[1] = \ftilted(\xI[1])-\Lt[1]$ and we bound the inner product in $\Lt[1]$ using $- (\eqref{bounding_g_1_equation_1} + \eqref{bounding_g_1_equation_2})$. Also, since $\zi[0] = \nabla \psi(\chii[0])$ we have $\breg[\psi^{\astfenchel}](\zi[0], \zi[1]) = \breg[\psi^{\astfenchel}](\nabla \psi(\chii[0]), \zi[1]) = \breg[\psi](\nabla \psi^{\astfenchel}(\zi[1]),\chii[0])$, so we can cancel two of the Bregman divergences. 
    In $\circled{2}$, we used \cref{prop:bounding_breg_div_by_norm_of_difference}, $\nabla \psi^{\astfenchel}(\zi[0]) = \xInit= \chii[0]$, and $\nabla \psi^{\astfenchel}(\zetai[0])=\xI[1]$. In $\circled{3}$ we used again the inequality $-(a^2+b^2) \leq -2ab$. 
    Finally $\circled{4}$ is deduced from $\At[1]=\ai[1] \leq \stcvxpsi \gammanparam/\Ltildegeneric$ which comes from the assumption $\Ltildegeneric \ai[i+1][2]/\gammanparam\stcvxpsi \leq \ai[i+1]+\At[i]\gammanparam\gammapparam$ for $i=0$.

    The first part of the theorem follows. The second one is a straightforward application of the first one as we see below. Indeed, taking into account $\At[\T] = \frac{\T(\T+1)\stcvxpsi \gammanparam[2] \gammapparam}{4\Ltildegeneric}$, and the choice of $\T=\left\lceil\sqrt{4\Ltildegeneric\norm{\xInit-\xast}^2/(\gammanparam[2]\gammapparam\epsilon)}\right\rceil$, $\psi(\tilde{x}) =\frac{\stcvxpsi}{2}\norm{\tilde{x}}^2$, and $\hatepsilon[i]=\frac{\At[\T]\epsilon}{2(\T-1) \At[i]}$  we derive the second statement.
    \[
        \ftilted(\xI[\T]) - \ftilted(\xast) \leq \frac{\At[1] \Gt[1]}{\At[\T]} + \sum_{i=1}^{\T-1} \frac{\At[i]\hatepsilon[i]}{\At[\T]} \leq \frac{\frac{\stcvxpsi}{2}\norm{\xInit-\xast}^2}{\At[\T]} + \frac{\epsilon}{2} < \frac{2\Ltildegeneric\norm{\xInit -\xast}^2}{\gammanparam[2]\gammapparam \T^2} + \frac{\epsilon}{2} \leq \epsilon.
    \]
\end{proof}

We present now the final lemma, that proves that $\hatgamma[i] $ can be found efficiently. As we advanced in the sketch of the main paper, we use a binary search. The idea behind it is that due to tilted-convexity \eqref{eq:quasiquasarconvexity} we satisfy the equation for $\hatgamma[i] =\frac{1}{\gammanparam}$ or $\hatgamma[i] =\gammapparam$, or there is $\hatgamma[i] \in(\gammapparam, 1/\gammanparam)$ such that $\innp{\nabla \ftilted(\xI[i+1]), \xI[i+1]-\xI[i]}=0$. The existence of $\xast$ that satisfies $\nabla \ftilted(\xast)=0$ along with the boundedness of $\Q$ and smoothness, imply the Lipschitzness of $\ftilted$. Both Lipschitzness and smoothness allow to prove that a binary search finds efficiently a suitable point. 

\begin{lemma} \label{lemma:binary_search}
    Let $\Q \subseteq\R^{\n}$ be a convex set of diameter $2\Rtilde$.  Let $f:\Q\to\R$ be a function that satisfies tilted-convexity \eqref{eq:quasiquasarconvexity}, has $\Ltildegeneric$ Lipschitz gradients and such that there is a global optimizer $\xastgtilde\in \R^n$ with $\norm{\xInit- \xastgtilde} \leq \Rglobaltilde$. Let the strongly convex parameter of $\psi(\cdot)$ be $\stcvxpsi = O(1)$. Let $i\geq 1$ be an index. Given two points $\xI[i], \zi[i] \in  \Q$ and the method in \eqref{general_rule_modified_quasar_axgd} using the learning rates $\ai[i]=\frac{i}{2}\cdot \frac{\stcvxpsi}{\Ltildegeneric}\cdot \gammanparam[2]\gammapparam$ prescribed in \cref{thm:accelerated_axgd_modified}, we can compute $\hatgamma[i] $ satisfying \eqref{eq:approximate_multiplied_convexity}, i.e.,
    \begin{equation}\label{property_result_of_line_search_approximate}
        \ftilted(\xI[i+1])-\ftilted(\xI[i]) \leq \hatgamma[i]  \innp{\nabla \ftilted(\xI[i+1]), \xI[i+1]-\xI[i]} + \hatepsilon[i]. 
    \end{equation}
    And the computation of $\hatgamma[i] $ requires no more than
    \[
        O\left(\log\left(\frac{\Ltildegeneric(\Rglobaltilde+\tilde{R})}{\gammanparam\hatepsilon[i]}\cdot i\right)\right)
    \]
    queries to the gradient oracle. 
\end{lemma}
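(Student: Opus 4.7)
The plan is to view $\tilde{x}_{i+1}$ as a continuous function of $\hat{\gamma}\in[\gammap,1/\gamman]$ via the chain of updates in \eqref{general_rule_modified_quasar_axgd}, and set $\phi(\hat{\gamma}) \defi \innp{\nabla f(\tilde{x}_{i+1}(\hat{\gamma})),\, \tilde{x}_{i+1}(\hat{\gamma})-\tilde{x}_i}$. I would first check the two endpoints: if $\phi(1/\gamman)\geq 0$, then applying the first branch of \eqref{eq:appendix_quasiquasarconvexity} with $\tilde{x}=\tilde{x}_{i+1}$ and $\tilde{y}=\tilde{x}_i$ certifies \eqref{property_result_of_line_search_approximate} immediately with $\hat{\gamma}_i=1/\gamman$ and $\hat{\epsilon}_i=0$; symmetrically, if $\phi(\gammap)\leq 0$, the second branch does so with $\hat{\gamma}_i=\gammap$. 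Hence the only nontrivial regime is $\phi(\gammap)>0>\phi(1/\gamman)$, in which case $\phi$ has a zero in the interior of $[\gammap,1/\gamman]$ by continuity.

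In that regime I would run standard bisection: at each step evaluate $\phi$ at the midpoint (one pass of \eqref{general_rule_modified_quasar_axgd}, costing $O(1)$ gradient calls) and shrink the interval to whichever half preserves the sign change. The key quantitative observation is that for any $\hat{\gamma}\in[\gammap,1/\gamman]$ satisfying $\abs{\phi(\hat{\gamma})}\leq \gamman\hat{\epsilon}_i$, property \eqref{property_result_of_line_search_approximate} holds with $\hat{\gamma}_i=\hat{\gamma}$: if $\phi(\hat{\gamma})\geq 0$ the first branch of \eqref{eq:appendix_quasiquasarconvexity} gives $f(\tilde{x}_{i+1})-f(\tilde{x}_i)\leq \tfrac{1}{\gamman}\phi(\hat{\gamma})$, whose slack against $\hat{\gamma}\phi(\hat{\gamma})$ is $(1/\gamman-\hat{\gamma})\phi(\hat{\gamma})\leq \hat{\epsilon}_i$; the case $\phi(\hat{\gamma})<0$ is symmetric via the second branch. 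So it suffices to drive an endpoint of the bisection interval to $\abs{\phi}\leq \gamman\hat{\epsilon}_i$.

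It remains to convert small interval length into small $\abs{\phi}$, which requires a Lipschitz estimate for $\phi$. Since $\nabla f(\tilde{x}^\ast)=0$ and $Q$ has diameter $2\tilde{R}$, $\tilde{L}$-smoothness yields $\norm{\nabla f(\tilde{x})}\leq 2\tilde{L}\tilde{R}$ throughout $Q$, and hence $\abs{\phi(\hat{\gamma})-\phi(\hat{\gamma}')}\leq 4\tilde{L}\tilde{R}\cdot\norm{\tilde{x}_{i+1}(\hat{\gamma})-\tilde{x}_{i+1}(\hat{\gamma}')}$. Differentiating the four updates of \eqref{general_rule_modified_quasar_axgd} in $\hat{\gamma}$, using the $1$-Lipschitzness of the Euclidean projection $\nabla\psi^\ast$ and the explicit bound $\abs{\alpha'(\hat{\gamma})}\leq A_i\gamman/a_{i+1}$ on the derivative of the convex-combination coefficient $\alpha(\hat{\gamma})=\hat{\gamma}A_i/(A_i\hat{\gamma}+a_{i+1}/\gamman)$, and then substituting the prescribed values $a_i=\Theta(i\gamman^2\gammap/\tilde{L})$ and $A_i=\Theta(i^2\gamman^2\gammap/\tilde{L})$ from \cref{thm:accelerated_axgd_modified}, yields a Lipschitz constant $M=\mathrm{poly}(\tilde{L},\tilde{R},i,1/\gamman)$ for $\phi$. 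Since the current interval always brackets a zero of $\phi$, after $k$ bisections $\abs{\phi}$ at either endpoint is at most $M(1/\gamman-\gammap)/2^k$; requiring this to be at most $\gamman\hat{\epsilon}_i$ gives $k=O(\log(\tilde{L}\tilde{R} i/(\gamman\hat{\epsilon}_i)))$, matching the stated oracle complexity.

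The main obstacle is the Lipschitz estimate: the four lines of \eqref{general_rule_modified_quasar_axgd} couple $\tilde{\chi}_i$, $\tilde{\zeta}_i$, and $\tilde{x}_{i+1}$ through both the $\hat{\gamma}$-dependent convex-combination weights and the map $\nabla f\circ\nabla\psi^\ast$, so propagating $\partial_{\hat{\gamma}}$ through this chain and tracking the precise polynomial dependence on $i$ (which must collapse to a single $\log i$ factor) is where the bookkeeping concentrates; everything else reduces to checking signs at endpoints and applying \eqref{eq:appendix_quasiquasarconvexity} piecewise.
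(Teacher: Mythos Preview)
Your proposal is correct and follows essentially the same bisection-on-the-sign-of-$\phi$ strategy as the paper: test the two extreme values of $\hat{\gamma}$, fall back to a sign-bracketing binary search when they fail, and terminate via a Lipschitz bound obtained by propagating through the four update lines. The only cosmetic difference is that the paper reparametrizes by the convex-combination weight $\lambda$ and tracks Lipschitzness of the full gap $G_i(\lambda)=f(\tilde{x}_{i+1}^\lambda)-f(\tilde{x}_i)-\hat{\Gamma}_i(\lambda)\phi$, whereas you track $\phi(\hat{\gamma})$ directly and use the clean observation that $|\phi|\leq\gamman\hat{\epsilon}_i$ already forces \eqref{property_result_of_line_search_approximate}; this spares you the $\hat{\Gamma}_i$-Lipschitz term and the $f$-difference term, so your bookkeeping is slightly lighter but the argument is the same.
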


\begin{proof}
    Let $\hypertarget{def:function_for_tilted_parameter}{\Gammahat[i]}(\lambda):[\frac{\ai[i+1]}{\At[i+1]}, \frac{\ai[i+1]/\gammanparam}{\At[i]\gammapparam+\ai[i+1]/\gammanparam}] \to \R$ be defined as 
    \begin{equation}\label{def:Gamma}
        \Gammahat[i]\left(\frac{\ai[i+1]/\gammanparam}{\At[i] \mathtt{\tilde{x}}+\ai[i+1]/\gammanparam}\right) = \mathtt{\tilde{x}},\text{ for }\mathtt{\tilde{x}}\in [\gammapparam, \frac{1}{\gammanparam}].  
    \end{equation}
    By monotonicity, it is well defined. Let $\xI[i+1][{\newtarget{def:lambda_superscript_notation}{\lambdanot{\lambda}}}]$ be the point computed by one iteration of \eqref{general_rule_modified_quasar_axgd} using the parameter $\hatgamma[i]  = \Gammahat[i](\lambda)$. Likewise, we define the rest of the points in iteration \eqref{general_rule_modified_quasar_axgd} depending on $\lambda$. We first try $\hatgamma[i]  = 1/\gammanparam$ and $\hatgamma[i]  = \gammapparam$ and use any of them if they satisfy the conditions. If neither of them do, it means that for the first choice we had $\innp{\nabla \ftilted(\xI[i+1][\lambdanot{\lambda_1}]), \xI[i+1][\lambdanot{\lambda_1}] - \xI[i]} < 0$ and for the second one, it is $\innp{\nabla \ftilted(\xI[i+1][\lambdanot{\lambda_2}]), \xI[i+1][\lambdanot{\lambda_2}] - \xI[i]} > 0 $, for $\lambda_1 = \Gammahat[i][-1](1/\gammanparam)$ and $\lambda_2 = \Gammahat[i][-1](\gammapparam)$. Therefore, by continuity, there is $\lambda^\ast \in [\lambda_1, \lambda_2]$ such that $\innp{\nabla \ftilted(\xI[i+1][\lambdanot{\lambda^\ast}]), \xI[i+1][\lambdanot{\lambda^\ast}] - \xI[i]}=0$. The continuity condition is easy to prove. We omit it because it is derived from the Lipschitzness condition that we will prove below. Such a point satisfies \eqref{eq:approximate_multiplied_convexity} for $\hatepsilon[i]=0$. We will prove that the function $\newtarget{def:G_binary_search}{\Gbinsearch[i]}:[\frac{\ai[i+1]}{\At[i+1]}, \frac{\ai[i+1]/\gammanparam}{\At[i]\gammapparam+\ai[i+1]/\gammanparam}]\to\R$, defined as 
    \begin{equation}\label{def:G}
    \Gbinsearch[i](\lambda) \defi -\Gammahat[i](\lambda) \innp{\nabla \ftilted(\xI[i+1][\lambdanot{\lambda}]), \xI[i+1][\lambdanot{\lambda}]-\xI[i]}+ (\ftilted(\xI[i+1][\lambdanot{\lambda}])-\ftilted(\xI[i])),
    \end{equation}
    is Lipschitz so we can guarantee that \eqref{eq:approximate_multiplied_convexity} holds for a large enough interval around $\lambda^\ast$. Finally, we will be able to perform a binary search to efficiently find a point in such interval or another interval around another point that satisfies that the inner product is $0$.

    So 
    \begin{align}\label{lipschitzness_of_G}
    \begin{aligned}
        \abs{\Gbinsearch[i](\lambda) - \Gbinsearch[i](\lambda')} &\leq \abs{\ftilted(\xI[i+1][\lambdanot{\lambda}])-\ftilted(\xI[i+1][\lambdanot{\lambda'}])} \\
        &\quad+ \abs{\Gammahat[i](\lambda')}\cdot\abs{\innp{\nabla \ftilted(\xI[i+1][\lambdanot{\lambda'}]), \xI[i+1][\lambdanot{\lambda'}]-\xI[i]}-\innp{\nabla \ftilted(\xI[i+1][\lambdanot{\lambda}]), \xI[i+1][\lambdanot{\lambda}]-\xI[i]}} \\
        &\quad+\abs{\innp{\nabla \ftilted(\xI[i+1][\lambdanot{\lambda}]), \xI[i+1][\lambdanot{\lambda}]-\xI[i]}}\cdot\abs{\Gammahat[i](\lambda')-\Gammahat[i](\lambda)} 
    \end{aligned}
    \end{align}
    We have used the triangular inequality and the inequality 
    \begin{equation}\label{simple_inequality_from_triangular}
    \abs{\alpha_1 \beta_1 -\alpha_2\beta_2} \leq \abs{\alpha_1}\abs{\beta_1-\beta_2}+\abs{\beta_2}\abs{\alpha_1-\alpha_2},
   \end{equation} 
   which is a direct consequence of the triangular inequality, after adding and subtracting $\alpha_1\beta_2$ in the $\abs{\cdot}$ on the left hand side. We bound each of the three summands of the previous inequality separately, but first we bound the following which will be useful for our other bounds, 
\begingroup
\allowdisplaybreaks
\begin{align} 
\begin{aligned}
      \norm{\xI[i+1][\lambdanot{\lambda'}]&-\xI[i+1][\lambdanot{\lambda}]}    \circled{1}[=] \norm{(\lambda'\nabla\psi^{\astfenchel}(\zetai[i][\lambdanot{\lambda'}])+(1-\lambda')\xI[i])-(\lambda\nabla\psi^{\astfenchel}(\zetai[i][\lambdanot{\lambda}])+(1-\lambda)\xI[i])} \\
        & \circled{2}[\leq] \norm{\nabla \psi^{\astfenchel}(\zetai[i][\lambdanot{\lambda}])-\xI[i]}\abs{\lambda'-\lambda} + \norm{\lambda'\nabla\psi^{\astfenchel}(\zetai[i][\lambdanot{\lambda'}]) -\lambda'\nabla\psi^{\astfenchel}(\zetai[i][\lambdanot{\lambda}])} \\
        & \circled{3}[\leq] 2\Rtilde\abs{\lambda-\lambda'} +\norm{\nabla\psi^{\astfenchel}(\zetai[i][\lambdanot{\lambda'}])-\nabla\psi^{\astfenchel}(\zetai[i][\lambdanot{\lambda}])} \circled{4}[\leq] 2\Rtilde\abs{\lambda-\lambda'} + \frac{a_{i+1}}{\gammanparam\stcvxpsi} \norm{\nabla \ftilted(\chii[i][\lambdanot{\lambda}])-\nabla \ftilted(\chii[i][\lambdanot{\lambda'}])} \\ 
        & \circled{5}[\leq] 2\Rtilde\abs{\lambda-\lambda'} + \frac{a_{i+1}\Ltildegeneric}{\gammanparam\stcvxpsi}\norm{\chii[i][\lambdanot{\lambda}]-\chii[i][\lambdanot{\lambda'}]}  \circled{6}[\leq] \left(2\Rtilde+\frac{2a_{i+1}\Ltildegeneric\Rtilde}{\gammanparam\stcvxpsi}\right)\abs{\lambda-\lambda'}
\end{aligned}
\end{align}
\endgroup
    Here, $\circled{1}$ uses the definition of $\xI[i+1][\lambdanot{\lambda}]$ as a convex combination of $\xI[i]$ and $\nabla\psi^{\astfenchel}(\zetai[i][\lambdanot{\lambda}])$. $\circled{2}$ adds and substracts $\lambda'\nabla \psi^{\astfenchel}(\zetai[i][\lambdanot{\lambda}])$, groups terms and uses the triangular inequality. In $\circled{3}$ we use the fact that the diameter of $\Q$ is $2\Rtilde$ and bound $\lambda'\leq 1$, and $\abs{\lambda} \leq 1$. $\circled{4}$ uses the $\frac{1}{\stcvxpsi}$ Lipschitzness of $\nabla \psi^{\astfenchel}(\cdot)$, which is a consequence of the $\stcvxpsi$-strong convexity of $\psi(\cdot)$. $\circled{5}$ uses the smoothness of $\ftilted$. In $\circled{6}$, from the definition of $\chii[i][\lambdanot{\lambda}]$ we have that $ \norm{\chii[i][\lambdanot{\lambda}]-\chii[i][\lambdanot{\lambda'}]}\leq \norm{\xI[i]-\zi[i]}\abs{\lambda-\lambda'}$. We bounded this further using the diameter of $ \Q$.

    Note that $\ftilted$ is Lipschitz over $\Q$. By the existence of $\xastg$, $\Ltildegeneric$-gradient Lipschitzness and the diameter of $\Q$, we have $\norm{\nabla \ftilted(\tilde{x})} =\norm{\nabla \ftilted(\tilde{x})-\nabla \ftilted(\xastgtilde)} \leq \Ltildegeneric\norm{\tilde{x}-\xastg} \leq 2(\Rglobaltilde+\tilde{R})\Ltildegeneric $. So the Lipschitz constant $\hypertarget{def:Lipschitz_constant}{\Lips}$ of $\ftilted$ is $\Lips \leq 2(\Rglobaltilde+\tilde{R})\Ltildegeneric$. Now we can proceed and bound the three summands of \eqref{lipschitzness_of_G}. The first one reduces to the inequality above after using Lipschitzness of $\ftilted(\cdot)$: 
    \begin{equation}\label{eq:first_summand}
        \abs{\ftilted(\xI[i+1][\lambdanot{\lambda}])-\ftilted(\xI[i+1][\lambdanot{\lambda'}])}  \leq \Lips\norm{\xI[i+1][\lambdanot{\lambda'}]-\xI[i+1][\lambdanot{\lambda}]}.
    \end{equation}
    We prove Lipschitzness of $\Gammahat[i]$. Note that 
    \begin{equation}\label{eq:lipschitzness_of_Gamma}
        \abs{(\Gammahat[i][-1])'(\mathtt{\tilde{x}})} = \absadj{\frac{\At[i]\ai[i+1]/\gammanparam}{(\At[i]\mathtt{\tilde{x}}+\ai[i+1]/\gammanparam)^2}} \geq  \frac{\gammanparam \At[i]\ai[i+1]}{\At[i+1][2]},
    \end{equation}
    so $\Gammahat[i]'(\lambda)$ is bounded by $\At[i+1][2]/(\gammanparam \At[i] \ai[i+1])$ for any $\lambda$. In order to bound the second summand, we use $\mathtt{\tilde{x}}\in[\gammapparam, 1/\gammanparam]$ and obtain $\abs{\Gammahat[i](\lambda)} \leq \frac{1}{\gammanparam}$. For the second factor, we add and subtract $\innp{\nabla \ftilted(\xI[i+1][\lambdanot{\lambda}]),\xI[i+1][\lambdanot{\lambda'}]-\xI[i]}$ and use the triangular inequality and then Cauchy-Schwartz. Thus, we obtain
    \begin{align}\label{eq:second_summand}
    \begin{aligned}
        \abs{&\innp{\nabla \ftilted(\xI[i+1][\lambdanot{\lambda'}]), \xI[i+1][\lambdanot{\lambda'}]-\xI[i]}-\innp{\nabla \ftilted(\xI[i+1][\lambdanot{\lambda}]), \xI[i+1][\lambdanot{\lambda}]-\xI[i]}} \\
       &\leq \norm{\nabla \ftilted(\xI[i+1][\lambdanot{\lambda}])}\cdot \norm{\xI[i+1][\lambdanot{\lambda'}]-\xI[i+1][\lambdanot{\lambda}]} +\norm{\nabla \ftilted(\xI[i+1][\lambdanot{\lambda'}])-\nabla \ftilted(\xI[i+1][\lambdanot{\lambda}])}\cdot\norm{\xI[i+1][\lambdanot{\lambda'}]-\xI[i]} \\
        &\circled{1}[\leq] (\Lips +2\Ltildegeneric\Rtilde)\norm{\xI[i+1][\lambdanot{\lambda'}]-\xI[i+1][\lambdanot{\lambda}]}.
    \end{aligned}
\end{align}
In $\circled{1}$, we used Lipschitzness to bound the first factor. We also used the diameter of $ \Q$ to bound the last factor and the smoothness of $\ftilted(\cdot)$ to bound the first factor of the second summand.

    For the third summand, we will bound the first factor using Cauchy-Schwartz, Lipschitzness of $\ftilted(\cdot)$ and the diameter of $ \Q$. We just proved in \eqref{eq:lipschitzness_of_Gamma} that $\Gammahat[i]$ is Lipschitz, so use this property for the second factor. The result is the following
    \begin{align}\label{eq:third_summand}
    \begin{aligned}
        \abs{\innp{\nabla \ftilted(\xI[i+1][\lambdanot{\lambda}]), \xI[i+1][\lambdanot{\lambda}]-\xI[i]}}\cdot\abs{\Gammahat[i](\lambda')-\Gammahat[i](\lambda)}  \leq 2\Lips\Rtilde \frac{\At[i+1][2]}{\gammanparam \At[i] \ai[i+1]}\abs{\lambda'-\lambda}.
    \end{aligned}
\end{align}

Applying the bounds of the three summands \eqref{eq:first_summand}, \eqref{eq:lipschitzness_of_Gamma}, \eqref{eq:second_summand}, \eqref{eq:third_summand} into \eqref{lipschitzness_of_G} we obtain the inequality $\abs{\Gbinsearch[i](\lambda') - \Gbinsearch[i](\lambda)} \leq \hat{L}\abs{\lambda'-\lambda}$ for 
\begin{align*}
    \begin{aligned}
        \hat{L} = \left(2\Rtilde+\frac{2a_{i+1}\Ltildegeneric\Rtilde}{\gammanparam\stcvxpsi}\right)\left(\Lips + (\Lips+2\Ltildegeneric\Rtilde)\frac{1}{\gammanparam}\right)+ 2\Lips\Rtilde \frac{\At[i+1][2]}{\gammanparam \At[i] \ai[i+1]}.
    \end{aligned}
\end{align*}

We will use the following to bound $\hat{L}$. If we use the learning rates prescribed in \cref{thm:accelerated_axgd_modified}, namely $\ai[i] = \frac{i\stcvxpsi \gammanparam[2]\gammapparam}{2L}$ and thus $\At[i]=\frac{i(i+1)\stcvxpsi\gammanparam[2]\gammapparam}{4L}$ we can bound $A^2_{i+1}/(\At[i]\ai[i+1]) \leq 4(i+2)$, using that $i\geq 1$. We recall we computed $\Lips \leq 2(\Rglobaltilde+\tilde{R})\Ltildegeneric$ and that we used $\stcvxpsi=O(1)$. In \cref{alg:accelerated_gconvex} we use $\stcvxpsi=1$. 

On the other hand the initial length of the search interval, which is the domain of definition of $\Gbinsearch[i]$ is at most $1$ since the interval is in $(0, 1)$. Recall we are denoting by $\lambda^\ast$ a value such that $\innp{\nabla \ftilted(\xI[i+1][\lambdanot{\lambda^\ast}]), \xI[i+1][\lambdanot{\lambda^\ast}] - \xI[i]}=0$ so $\Gbinsearch[i](\lambda^\ast) \leq 0$. Lipschitzness of $G$ implies that if $\Gbinsearch[i](\lambda^\ast) \leq 0$ then $\Gbinsearch[i](\lambda) \leq \hatepsilon[i]$ for 
\[
\lambda \in [\lambda^\ast-\frac{\hatepsilon[i]}{\hat{L}}, \lambda^\ast+\frac{\hatepsilon[i]}{\hat{L}}]\cap[\Gammahat[i][-1](1/\gammanparam), \Gammahat[i][-1](\gammapparam)].
\] 
If the extremal points, $\Gammahat[i][-1](1/\gammanparam),\Gammahat[i][-1](\gammapparam)$ did not satisfy \eqref{property_result_of_line_search_approximate}, then this interval is of length $\frac{2\hatepsilon[i]}{\hat{L}}$ and a point in such interval or another interval that is around another point $\bar{\lambda}^\ast$ that satisfies $\innp{\nabla \ftilted(xI[i+1][\bar{\lambda}^\ast]), \xI[i+1][\bar{\lambda}^\ast]-\xI[i]}=0$ can be found with a binary search in at most
\[
    O\left(\log\left(\frac{\hat{L}}{\hatepsilon[i]}\right)\right) \circled{1}[=] O\left(\log\left(\frac{\Ltildegeneric(\Rglobaltilde+\tilde{R})}{\gammanparam\hatepsilon[i]}\cdot i\right)\right)
\]
iterations, provided that at each step we can ensure we halve the size of the search interval. The bounds of the previous paragraph are applied in $\circled{1}$.The binary search can be done easily: we start with $[\Gammahat[i][-1](1/\gammanparam), \Gammahat[i][-1](\gammapparam)]$  and assume the extremes do not satisfy \eqref{property_result_of_line_search_approximate}, so the sign of $\innp{\nabla \ftilted(\xI[i+1][\lambdanot{\lambda}]), \xI[i+1][\lambdanot{\lambda}]-\xI[i]}$ is different for each extreme. Each iteration of the binary search queries the midpoint of the current working interval and if \eqref{property_result_of_line_search_approximate} is not satisfied, we keep the half of the interval such that the extremes keep having the sign of $\innp{\nabla \ftilted(\xI[i+1][\lambdanot{\lambda}]), \xI[i+1][\lambdanot{\lambda}]-\xI[i]}$ different from each other, ensuring that there is a point in which this expression evaluates to $0$ and thus keeping the invariant. We include the pseudocode of this binary search in \cref{alg:bin_search}.
\end{proof}

\begin{algorithm}[h!]
    \caption{BinaryLineSearch${(\xI[i], \zi[i], {\protect\ftilted}, {\protect\X}, \ai[i+1], {\protect\At[i]}, {\protect\epsilon}, {\protect\Ltildegeneric}, {\protect\gammanparam}, {\protect\gammapparam})}$}
    \label{alg:bin_search}
    \begin{algorithmic}[1]
        \REQUIRE Points $\xI[i]$, $\zi[i]$, function $\ftilted$, domain $\X$, learning rate $\ai[i+1]$, accumulated learning rate $\At[i]$, final target accuracy $\epsilon$, final number of iterations $\T$, smoothness constant $\Ltildegeneric$, constants $\gammanparam, \gammapparam$.
        Define $\hatepsilon[i] \gets (\At[\T]\epsilon)/(2(\T-1)\At[i])$ as in \cref{thm:accelerated_axgd_modified}, i.e., with $\At[\T]=\T(\T+1)\gammanparam[2]\gammapparam/4\Ltildegeneric$. $\Gammahat[i]$ defined as in \eqref{def:Gamma} and $\Gbinsearch[i]$ defined as in \eqref{def:G} i.e.
        \[
    \Gbinsearch[i](\lambda) \defi -\Gammahat[i](\lambda) \innp{\nabla \ftilted(\xI[i+1][\lambdanot{\lambda}]), \xI[i+1][\lambdanot{\lambda}]-\xI[i]}+ (\ftilted(\xI[i+1][\lambdanot{\lambda}])-\ftilted(\xI[i])),
        \]
        for $\xI[i+1][\lambdanot{\lambda}][\notilde]$ being the result of method \eqref{appendix_general_rule_modified_quasar_axgd} when $\hatgamma[i]  = \Gammahat[i](\lambda)$.
        \ENSURE $\lambda = \frac{\ai[i+1]/\gammanparam}{\At[i]\hatgamma[i] +\ai[i+1]/\gammanparam}$ for $\hatgamma[i] $ such that $\Gbinsearch[i](\Gammahat[i][-1](\hatgamma[i] )) \leq \hatepsilon[i]$.
        \IF{$\Gbinsearch[i](\Gammahat[i][-1](1/\gammanparam)) \leq \hatepsilon[i]$} $\lambda = \Gammahat[i][-1](1/\gammanparam)$
        \ELSIF{$\Gbinsearch[i](\Gammahat[i][-1](\gammapparam)) \leq \hatepsilon[i]$} $\lambda = \Gammahat[i][-1](\gammapparam)$
        \ELSE
            \State $\varl \gets \Gammahat[i][-1](1/\gammanparam)$
            \State $\varr \gets \Gammahat[i][-1](\gammapparam)$
            \State $\lambda \gets (\varl + \varr)/2$
            \WHILE{$\Gbinsearch[i](\lambda) > \hatepsilon[i]$}
                \IF{$\innp{\nabla \ftilted(\xI[i+1][\lambdanot{\lambda}]), \xI[i+1][\lambdanot{\lambda}]-\xI[i]} <0$} $\varr \gets \lambda$
                \ELSE $\ \varl \gets \lambda$
                \ENDIF
                \State $\lambda \gets (\varl + \varr)/2$
            \ENDWHILE
        \ENDIF
        \State \textbf{return} $\lambda$
\end{algorithmic}
\end{algorithm}

We proceed to prove \cref{thm:acceleration_quasiquasarconvexity}, which is an immediate consequence of the previous results.

\begin{proof}\textbf{of \cref{thm:acceleration_quasiquasarconvexity}.}\linkofproof{thm:acceleration_quasiquasarconvexity}
    The proof follows from \cref{thm:accelerated_axgd_modified}, provided that we can find $\hatgamma[i] $ satisfying \eqref{eq:approximate_multiplied_convexity}. \cref{lemma:binary_search} shows that this is possible after performing a logarithmic number of queries to the gradient oracle. Note that given our choice of $\hatepsilon[i]$, $\T$ and $\ai[i]$, the number of queries to the gradient oracle that \cref{lemma:binary_search} requires is no more than $O(\log(i\Ltildegeneric (\Rglobaltilde + \tilde{R}) /(\gammanparam\hatepsilon[i])))$ for any $i \leq \T$. So we find an $\epsilon$-minimizer of $\ftilted$ after $\bigotilde{\sqrt{\Ltildegeneric/(\gammanparam[2]\gammapparam\epsilon)}}$ queries to the gradient oracle.
    
\end{proof}

\begin{proof}\textbf{of \cref{thm:riemannian_acceleration}.}\linkofproof{thm:riemannian_acceleration}
    Given the function to optimize $\F:\Mk\to\R$ and the geodesic map $\h$, we define $\f=\F\circ \h^{-1}$. Using \cref{lemma:smoothness_of_transformed_function} we know that $\f$ has $\newtarget{def:smoothness_tilde_i_e_euclidean_coming_from_riemannian}{\Ltilde}$ Lipschitz gradients, with $\Ltilde =O(\L(\Rglobal+1))$. \cref{prop:bounding_hyperplane} proves that $\f$ satisfies tilted-convexity \eqref{eq:quasiquasarconvexity} for constants $\gamman$ and $\gammap$ depending on $\RR$ only. So \cref{thm:acceleration_quasiquasarconvexity} applies and the total number of queries to the oracle needed to obtain an $\epsilon$-minimizer of $\f$ is $\bigotilde{\sqrt{\Ltilde/\gamman[2]\gammap\epsilon}} = \bigotilde{\sqrt{\L(\Rglobal+1)/\epsilon}} =\bigotilde{\sqrt{\L\zetar/\epsilon}} $. The result follows, since $\f(\xI[\T]) - \f(\xast) = \F(\xI[\T][][])-\F(\xast[\notilde])$.
\end{proof}

We recall a few concepts that were assumed during \cref{sec:algorithm} to better interpret \cref{thm:riemannian_acceleration}. We work in the hyperbolic space, or in an open hemisphere. The aim is to minimize a smooth and g-convex function defined on any of these manifolds, or a submanifold of them. Starting from an arbitrary point $\xInit[\notilde]$, we perform constrained optimization over $\BR=\expon{\xInit[\notilde]}(\ball(0,\RR))$, for some $\RR > 0$. We assume $\F:\Mk\to\R$ is a differentiable function, $\BR\subset \Mk$, and $\Mk$ has constant sectional curvature $\K$. If $\K$ is positive, we restrict $\RR<\pi/(2\sqrt{\K})$ so $\BR$ is contained in an open hemisphere and it is uniquely geodesic. We define a geodesic map $\h:\Mk\to \MEucl$, where $\MEucl\subset\R^{\n}$ and define the function $f:h(\Mk)\to\R$ as $\f = \F\circ \h^{-1}$. We perform constrained optimization over this function $\f$ in $\X=\h(\BR)$ in an accelerated way,  up to constants and log factors, where the constants appear as an effect of the deformation of the geometry and depend on $\RR$ and $\K$ only.

{
\subsection{Reducing constants with an approximate ball optimization oracle}

In this section, we present our algorithm that shows that global optimization can be reduced to sequential optimization in Riemannian balls, which boosts convergence by reducing geometric constants in the algorithm. We use the strongly convex version of \cref{alg:accelerated_gconvex} in \cref{coroll:acceleration_st_g_convex} for the optimization in these balls, where $\newtarget{def:algorithm_1_strongly_convex_version}{\algsc}(\X_k, \pk[k-1], \F, \epsilonp)$ means the algorithm is run with the initial point $\pk[k-1]$ to optimize function $\F$ over the set $\X_k$ with accuracy $\epsilonp$. Recall that without loss of generality we assume $\K \in \{-1, 1\}$, cf. \cref{remark:rescaling_of_K}.

We start by showing that the iterates of \cref{alg:instance_of_riemacon} stay reasonably bounded, which is crucial in order to bound geometric penalties.

\begin{proposition}\label{prop:we_go_no_farther_than_2R}
    The iterates $\pk$ of \cref{alg:instance_of_riemacon} satisfy $\dist(\pk, \xastg) \leq 2\Rglobal$.
\end{proposition}

\begin{proof}
    We first show that the optimizer $\newtarget{def:optimizers_in_balls_in_boosting_alg}{\pkast}$ of $\F$ in the ball $\X_k$ is no farther than the center of $\X_k$ to $\xastg$, that is, $\dist(\pkast, \xastg) \leq \dist(\pk[k-1], \xastg)$. We assume $\xastg$ is not in the ball because otherwise the property holds trivially. The geodesic segment joining $\pkast$ and $\xastg$ does not contain any other point of the ball, since otherwise by strong convexity we would have that the function value of one such point would be lower than $\F(\pkast)$. This fact implies that the angle between $\exponinv{\pkast}(\xastg)$ and $\exponinv{\pkast}(\pk[k-1])$ is obtuse, and so $\circled{1}$ holds below and by using \eqref{eq:cosine_ineq_delta} we conclude $\dist(\pkast, \xastg) \leq \dist(\pk[k-1], \xastg)$:
\begin{align*}
 \begin{aligned}
     0 &\circled{1}[\geq] 2\innp{\exponinv{\pkast}(\xastg), \exponinv{\pkast}(\pk[k-1])} \geq \dist(\pkast, \xastg)^2 + \distorp[\dist(\pkast, \xastg)] \cdot \dist(\pkast, \pk[k-1])^2 - \dist(\pk[k-1], \xastg)^2 \\
     &\geq \dist(\pkast, \xastg)^2  - \dist(\pk[k-1], \xastg)^2.
   \end{aligned}
\end{align*}

    If instead of optimizing exactly in the ball we obtain a close approximation, the iterates do not get very far from $\xastg$. Indeed, by $\mu$-strong convexity, if $\pk$ is an $\epsilonp$-minimizer of $\F$ in $\X_k$, it holds that $\dist(\pkast, \pk) \leq \sqrt{\frac{2\epsilonp}{\mu}} \leq \frac{\Rglobal}{\TT}$, where we used the definition of $\epsilonp = \min\{\frac{\RR\epsilon}{4\Rglobal},\frac{\mu\Rglobal[2]}{2\TT^2}\}$ in the last inequality. Consequently, applying the non-expansiveness and this last inequality recursively, we obtain
    \[
        \dist(\pk[\TT], \xastg) \leq \dist(\pkast[\TT], \xastg) + \dist(\pkast[\TT], \pk[\TT]) \leq \dist(\pk[\TT-1], \xastg) + \frac{\Rglobal}{\TT} \leq \dots \leq \dist(\pk[0], \xastg) + \Rglobal \leq 2\Rglobal.
    \] 
\end{proof}

\begin{proof}\textbf{of \cref{thm:reduction_to_ball_opti}}\linkofproof{thm:reduction_to_ball_opti}. 
    If $\Rglobal \leq \RR= 1$, which is the case in which the condition in Line \ref{line:if_D_equal_R_one_single_Riemacon} of \cref{alg:instance_of_riemacon} is satisfied, then we just need to call \cref{alg:accelerated_gconvex} once in the corresponding ball $\ball(\xInit[\notilde], \Rglobal)$ and we obtain rates $\bigotilde{\sqrt{\frac{\L}{\mu}}}$. So from now on we assume $\RR < \Rglobal$. Let $\TT = \lceil \frac{2\Rglobal}{\RR} \ln(\frac{\L\Rglobal[2]}{\epsilon}) \rceil$ and let $\epsilonp = \min\{\frac{\RR\epsilon}{4\Rglobal},\frac{\mu\Rglobal[2]}{2\TT^2}\}$. Since every time we call \cref{alg:accelerated_gconvex} we do it over a ball of radius $\RR$, then Line \ref{line:alg_sc_as_subroutine} of \cref{alg:instance_of_riemacon} takes $\bigotilde{\sqrt{\frac{\L}{\mu}}\log(\frac{1}{\epsilonp})}$ gradient oracle calls to optimize in the ball $\X_k$ of radius $\RR = 1$ up to precision $\epsilonp$, for any $k$.
Recall that we denote the global optimizer of $\F$ by $\xastg$. Define the g-convex combination 
\[
    \hat{p}_{k} = \expon{\pk[k-1]}\left((1-\frac{\RR}{2\Rglobal})\exponinv{\pk[k-1]}(\pk[k-1])+\frac{\RR}{2\Rglobal}\exponinv{\pk[k-1]}(\xastg)\right).
\] 
Since $\X_k$ is a ball of radius $\RR$ and by \cref{prop:we_go_no_farther_than_2R}, it is $\dist(\pk, \xastg) \leq 2\Rglobal$, we have $\hat{p}_{k} \in \X_k$. Consequently, we have
\[
    \F(\pk) \circled{1}[\leq] \F(\hat{p}_{k}) + \epsilonp \circled{2}[\leq] (1-\frac{\RR}{2\Rglobal})\F(\pk[k-1]) + \frac{\RR}{2\Rglobal} \F(\xastg) + \epsilonp,
\] 
where $\circled{1}$ is due to the guarantees of the optimization in the ball and the fact that $\hat{p}_k \in \X_k$, $\circled{2}$ holds due to g-convexity. Subtracting $\F(\xastg)$ in both sides and rearranging, we obtain 
\[
\F(\pk)-\F(\xastg) \leq (1-\frac{\RR}{2\Rglobal})(\F(\pk[k-1]) - \F(\xastg)) + \epsilonp.
\] 
Applying this inequality recursively, we obtain
\begin{align*}
 \begin{aligned}
     \F(\pk[\TT])-\F(\xastg) &\leq (1-\frac{\RR}{2\Rglobal})^{\TT}(\F(\xInit[\notilde]) - \F(\xastg)) + \epsilonp\sum_{i=0}^{\TT-1} (1-\frac{\RR}{2\Rglobal})^i \\
     &\circled{1}[\leq] \exp(-\frac{\TT\RR}{2\Rglobal})\frac{\L\Rglobal[2]}{2} + \frac{2\Rglobal}{\RR}\epsilonp\\
     &\leq \frac{\epsilon}{2} + \frac{\epsilon}{2} = \epsilon.
   \end{aligned}
\end{align*}
    Above, we used $1-x \leq \exp(-x)$, we used smoothness to bound $\F(\xInit[\notilde]) - \F(\xastg) \leq \frac{Ld(\xInit[\notilde], \xastg)^2}{2}$, we bounded $\sum_{i=0}^{\TT-1} (1-\frac{\RR}{2\Rglobal})^i \leq \sum_{i=0}^{\infty} (1-\frac{\RR}{2\Rglobal})^i = \frac{2\Rglobal}{\RR}$ and we used the values of $\epsilonp$ and $\TT$.  Finally, we compute the complexity of this algorithm. We have $\TT$ iterations taking $\bigotilde{\sqrt{\frac{\L(\Rglobal+1)}{\mu}}}$ gradient oracle queries each. Using the value of $\TT$ and $\RR$, we obtain that in total, we call the gradient oracle $\bigotilde{\frac{\Rglobal}{\RR} \sqrt{\frac{\L(\Rglobal+1)}{\mu}}} = \bigotilde{\zetar^{3/2}\sqrt{\frac{\L}{\mu}}}$ times, where $\zetar \defi \distorn[\Rglobal] \in [\max\{\Rglobal, 1\},\Rglobal+1]$, and $\distorn[\Rglobal]$ was defined in \eqref{eq:def_distortion_neg}. 

    Using the reduction in \cref{thm:reduction_to_g_st_convex} as in \cref{example:application_of_reduction_to_st_convex}, we obtain an algorith for optimizing the g-convex case that runs in $\bigotilde{\zetar^{3/2}\sqrt{\frac{\zetar}{\deltar} + \frac{L\Rglobal[2]}{\deltar\epsilon}}}$, where $\deltar \defi \distorp[\Rglobal]$, and $\distorp[\Rglobal]$ was defined in \eqref{eq:def_distortion_pos}.
\end{proof}

} 

\subsection{Auxiliary lemmas}
The following are classical lemmas of convex optimization that we used in this section and that we add for completeness.

\begin{fact}\label{grad_of_fenchel_dual}
    Let $\psi:\Q\to\R$ be a differentiable strongly-convex function. Then
    \[
        \nabla \psi^{\astfenchel}(\tilde{z}) = \argmin_{\tilde{x}\in \Q}\{\innp{-\tilde{z}, \tilde{x}}+\psi(\tilde{x})\}.
    \] 
\end{fact}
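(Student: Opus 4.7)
The plan is to reduce the claim to two standard observations about Fenchel duals of strongly convex functions. First I would argue existence and uniqueness of the maximizer: since $\psi$ is strongly convex, the map $\tilde{x}\mapsto \innp{\tilde{z},\tilde{x}}-\psi(\tilde{x})$ is strongly concave on $Q$, so $\tilde{x}^\ast(\tilde{z})\defi \argmax_{\tilde{x}\in Q}\{\innp{\tilde{z},\tilde{x}}-\psi(\tilde{x})\}$ is well defined and single-valued. By strong convexity of $\psi$ with parameter $\sigma>0$, standard perturbation arguments give that $\tilde{x}^\ast(\cdot)$ is in fact $\tfrac{1}{\sigma}$-Lipschitz in $\tilde{z}$, which will be used below to upgrade a subgradient statement to a gradient statement.

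Next I would show that $\tilde{x}^\ast(\tilde{z})$ is a subgradient of $\psi^\ast$ at $\tilde{z}$. The key inequality is obtained directly from the definition of $\psi^\ast$: for any other point $\tilde{y}$,
\begin{equation*}
    \psi^\ast(\tilde{y}) \;\geq\; \innp{\tilde{y},\tilde{x}^\ast(\tilde{z})}-\psi(\tilde{x}^\ast(\tilde{z})) \;=\; \psi^\ast(\tilde{z})+\innp{\tilde{y}-\tilde{z},\tilde{x}^\ast(\tilde{z})},
\end{equation*}
where the equality uses the definition of $\tilde{x}^\ast(\tilde{z})$ as the maximizer. This exhibits $\tilde{x}^\ast(\tilde{z})$ as a subgradient of the convex function $\psi^\ast$ at $\tilde{z}$.

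Finally, to conclude $\nabla\psi^\ast(\tilde{z})=\tilde{x}^\ast(\tilde{z})$, I would show $\psi^\ast$ is differentiable by verifying that its subdifferential at $\tilde{z}$ is the singleton $\{\tilde{x}^\ast(\tilde{z})\}$. If $\tilde{u}\in\partial\psi^\ast(\tilde{z})$, then by Fenchel--Young one has $\psi^\ast(\tilde{z})+\psi(\tilde{u})=\innp{\tilde{z},\tilde{u}}$, i.e. $\tilde{u}$ attains the maximum in the definition of $\psi^\ast(\tilde{z})$; by uniqueness of the maximizer $\tilde{u}=\tilde{x}^\ast(\tilde{z})$. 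Combined with continuity of $\tilde{x}^\ast(\cdot)$ from the first paragraph, this shows $\psi^\ast$ is differentiable at $\tilde{z}$ with the claimed gradient. The only mild subtlety, and hence the part I would be most careful with, is handling the case when the maximizer lies on the boundary of $Q$: there the first-order condition $\nabla\psi(\tilde{x}^\ast)=\tilde{z}$ need not hold, but the subgradient computation above still goes through verbatim because it never uses that optimality condition, only the defining inequality of $\psi^\ast$.
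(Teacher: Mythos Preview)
Your argument is correct and complete. Note, however, that the paper does not actually prove this statement: it is labeled a \emph{Fact} and placed in the ``Auxiliary lemmas'' subsection with the remark that these are ``classical lemmas of convex optimization that we used in this section and that we add for completeness.'' So there is no paper proof to compare against; you have supplied the standard justification the paper omits. The one place to be slightly careful is your invocation of the Fenchel--Young equality to force any subgradient $\tilde{u}$ of $\psi^\ast$ to lie in $Q$: this is cleanest if one first extends $\psi$ by $+\infty$ off $Q$ and checks that the resulting function is proper, closed, and convex, so that the biconjugate identity and the equality case of Fenchel--Young apply. With that caveat, your three-step route (unique maximizer by strong concavity, subgradient inequality from the definition of $\psi^\ast$, singleton subdifferential hence differentiability) is exactly the textbook proof.
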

See \citep{bertsekas2003convex} for a proof. Note that by the definition of Bregman divergence $\argmin_{\tilde{x}\in \Q}\{\innp{\tilde{y}-\nabla \psi(\xInit), \tilde{x}} + \psi(\tilde{x})\} = \argmin_{\tilde{x}\in \Q}\{\innp{\tilde{y}, \tilde{x}} + \breg[\psi](\tilde{x}, \xInit)\}$.

\begin{lemma}[Triangle equality of Bregman Divergences] \label{prop:triangle_inequality_of_bregman_div}
    For all $\tilde{x}, \tilde{y}, \tilde{z} \in \Q$ we have 
    \[
        \breg[\psi^{\astfenchel}] (\tilde{x}, \tilde{y}) = \breg[\psi^{\astfenchel}] (\tilde{z}, \tilde{y})  + \breg[\psi^{\astfenchel}] (\tilde{x}, \tilde{z}) + \innp{\nabla \psi^{\astfenchel}(\tilde{z})-\nabla \psi^{\astfenchel}(\tilde{y}), \tilde{x}-\tilde{z}}.
    \] 
\end{lemma}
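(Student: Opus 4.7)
The plan is to prove this identity by direct expansion from the definition of the Bregman divergence. This is a purely algebraic manipulation: no convexity, smoothness, or optimality arguments are required. Since $D_{\psi^\ast}$ is defined (via \cref{def:bregman_divergence} applied to $\psi^\ast$) as $D_{\psi^\ast}(a,b) = \psi^\ast(a) - \psi^\ast(b) - \innp{\nabla \psi^\ast(b), a-b}$, I expect the entire proof to be a single line of bookkeeping after expanding all three divergences.

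First, I will write out $D_{\psi^\ast}(\tilde{z},\tilde{y}) + D_{\psi^\ast}(\tilde{x},\tilde{z})$ using the definition. The two copies of $\psi^\ast(\tilde{z})$ cancel with opposite signs, leaving $\psi^\ast(\tilde{x}) - \psi^\ast(\tilde{y})$, together with the linear terms $-\innp{\nabla \psi^\ast(\tilde{y}), \tilde{z} - \tilde{y}} - \innp{\nabla \psi^\ast(\tilde{z}), \tilde{x} - \tilde{z}}$. Subtracting $D_{\psi^\ast}(\tilde{x},\tilde{y}) = \psi^\ast(\tilde{x}) - \psi^\ast(\tilde{y}) - \innp{\nabla \psi^\ast(\tilde{y}), \tilde{x} - \tilde{y}}$ from this sum makes the $\psi^\ast$ terms vanish entirely, and I am left with a linear combination of inner products involving only $\nabla \psi^\ast(\tilde{y})$ and $\nabla \psi^\ast(\tilde{z})$.

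The second step is to collect those inner products. The $\nabla \psi^\ast(\tilde{y})$ terms combine as $\innp{\nabla \psi^\ast(\tilde{y}), (\tilde{x} - \tilde{y}) - (\tilde{z} - \tilde{y})} = \innp{\nabla \psi^\ast(\tilde{y}), \tilde{x} - \tilde{z}}$, while the $\nabla \psi^\ast(\tilde{z})$ term contributes $-\innp{\nabla \psi^\ast(\tilde{z}), \tilde{x} - \tilde{z}}$. Rearranging signs to match the statement gives exactly $\innp{\nabla \psi^\ast(\tilde{z}) - \nabla \psi^\ast(\tilde{y}), \tilde{x} - \tilde{z}}$, completing the verification.

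There is no substantive obstacle here; the only thing to be careful about is matching the sign convention in \cref{def:bregman_divergence} (the Bregman divergence centered at the second argument) and tracking which inner product comes with which sign during the rearrangement. The identity holds with $\psi^\ast$ replaced by any differentiable function, so no appeal to strong convexity or to the Fenchel duality relations of \cref{prop:duality_of_bregman_divergences} is needed.
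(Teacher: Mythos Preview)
Your proof is correct and follows essentially the same approach as the paper's: both expand the Bregman divergences from \cref{def:bregman_divergence} and verify the identity by direct algebraic cancellation of the $\psi^\ast$ terms and bookkeeping of the remaining inner products. The only cosmetic difference is that the paper starts from the right-hand side and simplifies to the left, whereas you compute the difference and rearrange; the content is identical.
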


\begin{proof}
\begin{align*}
    \begin{aligned}
        \breg[\psi^{\astfenchel}]& (\tilde{z}, \tilde{y}) + \breg[\psi^{\astfenchel}] (\tilde{x}, \tilde{z}) + \innp{\nabla \psi^{\astfenchel}(\tilde{z})-\nabla \psi^{\astfenchel}(\tilde{y}), \tilde{x}-\tilde{z}} \\
        &= (\psi^{\astfenchel}(\tilde{z})-\psi^{\astfenchel}(\tilde{y}) -\innp{\nabla \psi^{\astfenchel}(\tilde{y}), \tilde{z}-\tilde{y}}) \\
        & \quad + (\psi^{\astfenchel}(\tilde{x})-\psi^{\astfenchel}(\tilde{z}) -\innp{\nabla \psi^{\astfenchel}(\tilde{z}), \tilde{x}-\tilde{z}}) \\
        &\quad + \innp{\nabla \psi^{\astfenchel}(\tilde{z})-\nabla \psi^{\astfenchel}(\tilde{y}), \tilde{x}-\tilde{z}}  \\
        & = \psi^{\astfenchel}(\tilde{x}) -\psi^{\astfenchel}(\tilde{y}) -\innp{\nabla \psi^{\astfenchel}(\tilde{y}), \tilde{z}-\tilde{y}}+ \innp{-\nabla \psi^{\astfenchel}(\tilde{y}), \tilde{x}-\tilde{z}} \\
        & =  \breg[\psi^{\astfenchel}] (\tilde{x}, \tilde{y}).
    \end{aligned}
    \end{align*}
\end{proof}   

\begin{lemma}\label{prop:bounding_breg_div_by_norm_of_difference}
    Given a $\stcvxpsi$-strongly convex function $\psi(\cdot)$ the following holds:
    \[
        \breg[\psi^{\astfenchel}](\tilde{z}_1, \tilde{z}_2) \geq \frac{\stcvxpsi}{2}\norm{\nabla \psi^{\astfenchel}(\tilde{z}_1)-\nabla \psi^{\astfenchel}(\tilde{z}_2)}^2.
    \] 
\end{lemma}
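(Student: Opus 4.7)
The plan is to reduce the desired bound on $D_{\psi^\ast}$ to the defining inequality of strong convexity for $\psi$, using the duality of Bregman divergences already established in Lemma \ref{prop:duality_of_bregman_divergences}.

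First, I would recall that $\sigma$-strong convexity of $\psi$ is equivalent to the inequality
\[
    D_\psi(\tilde{x}, \tilde{y}) \geq \frac{\sigma}{2}\norm{\tilde{x}-\tilde{y}}^2
\]
for all $\tilde{x}, \tilde{y}$, which follows directly from the definition of strong convexity together with Definition~\ref{def:bregman_divergence}. This is the inequality I want to transfer to $\psi^\ast$.

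Next, I would use the Fenchel--Moreau identity $\psi^{\ast\ast} = \psi$, which implies that $\nabla \psi$ and $\nabla \psi^\ast$ are mutual inverses when $\psi$ is differentiable and strongly convex. Applying Lemma \ref{prop:duality_of_bregman_divergences} with the substitutions $\tilde{z} = \tilde{z}_2$ and $\tilde{x} = \nabla \psi^\ast(\tilde{z}_1)$, so that $\nabla \psi(\tilde{x}) = \tilde{z}_1$, yields the key identity
\[
    D_{\psi^\ast}(\tilde{z}_1, \tilde{z}_2) = D_{\psi}\bigl(\nabla \psi^\ast(\tilde{z}_2),\, \nabla \psi^\ast(\tilde{z}_1)\bigr).
\]
Plugging $\tilde{x} = \nabla \psi^\ast(\tilde{z}_2)$ and $\tilde{y} = \nabla \psi^\ast(\tilde{z}_1)$ into the strong-convexity bound above gives
\[
    D_{\psi}\bigl(\nabla \psi^\ast(\tilde{z}_2),\, \nabla \psi^\ast(\tilde{z}_1)\bigr) \geq \frac{\sigma}{2}\norm{\nabla \psi^\ast(\tilde{z}_2) - \nabla \psi^\ast(\tilde{z}_1)}^2,
\]
and since the squared norm is symmetric in its two arguments, this is exactly the claim.

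There is no real obstacle here; the proof is just the composition of two elementary facts (the strong-convexity characterization of $\psi$ and the Bregman-divergence duality). The only point that requires mild care is the bookkeeping of which argument of $D_\psi$ corresponds to which argument of $D_{\psi^\ast}$ under duality, which is why I set up the substitution explicitly above so that $\nabla \psi(\tilde{x}) = \tilde{z}_1$ lands in the first slot.
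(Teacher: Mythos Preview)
Your argument is correct in the unconstrained setting but has a subtle gap in the constrained one, which is precisely the setting of this paper: recall from Definition~\ref{def:fenchel_dual} that $\psi^\ast(\tilde{z}) = \max_{\tilde{x}\in Q}\{\innp{\tilde{z},\tilde{x}} - \psi(\tilde{x})\}$ with $Q$ a bounded closed convex set, and the lemma is invoked in Theorem~\ref{thm:accelerated_axgd_modified} for mirror points $\tilde{z}_i,\tilde{\zeta}_i$ that can lie anywhere in $\R^d$. The problematic step is ``$\nabla\psi(\tilde{x}) = \tilde{z}_1$'' with $\tilde{x}=\nabla\psi^\ast(\tilde{z}_1)$. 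When the argmax defining $\nabla\psi^\ast(\tilde{z}_1)$ lands on the boundary of $Q$ (e.g.\ $\psi=\tfrac12\norm{\cdot}^2$ on a ball and $\norm{\tilde{z}_1}>\tilde{R}$), one only has $\tilde{z}_1 \in \nabla\psi(\nabla\psi^\ast(\tilde{z}_1)) + N_Q(\nabla\psi^\ast(\tilde{z}_1))$, not equality. Consequently the duality identity you invoke gives $D_\psi(\nabla\psi^\ast(\tilde{z}_2),\nabla\psi^\ast(\tilde{z}_1)) = D_{\psi^\ast}(\nabla\psi(\nabla\psi^\ast(\tilde{z}_1)),\tilde{z}_2)$, which is not $D_{\psi^\ast}(\tilde{z}_1,\tilde{z}_2)$ in general.

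The paper's proof sidesteps this by expanding $D_{\psi^\ast}(\tilde{z}_1,\tilde{z}_2)$ directly via $\psi^\ast(\tilde{z})=\innp{\nabla\psi^\ast(\tilde{z}),\tilde{z}}-\psi(\nabla\psi^\ast(\tilde{z}))$ to obtain the exact equality
\[
D_{\psi^\ast}(\tilde{z}_1,\tilde{z}_2) = \psi(\nabla\psi^\ast(\tilde{z}_2)) - \psi(\nabla\psi^\ast(\tilde{z}_1)) - \innp{\tilde{z}_1,\nabla\psi^\ast(\tilde{z}_2)-\nabla\psi^\ast(\tilde{z}_1)},
\]
then applies strong convexity of $\psi$ and absorbs the residual term $\innp{\nabla\psi(\nabla\psi^\ast(\tilde{z}_1))-\tilde{z}_1,\nabla\psi^\ast(\tilde{z}_2)-\nabla\psi^\ast(\tilde{z}_1)}\geq 0$ using the first-order optimality condition for the constrained maximum. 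In fact this shows $D_{\psi^\ast}(\tilde{z}_1,\tilde{z}_2) \geq D_\psi(\nabla\psi^\ast(\tilde{z}_2),\nabla\psi^\ast(\tilde{z}_1))$, so your route can be salvaged by replacing the claimed equality with this inequality and justifying it via optimality rather than via $\nabla\psi\circ\nabla\psi^\ast=\mathrm{Id}$.
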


\begin{proof}
    Using the first order optimality condition of the Fenchel dual and \eqref{grad_of_fenchel_dual} we obtain
    \[
        \innp{\nabla\psi(\nabla\psi^{\astfenchel}(\tilde{z}_1))-\tilde{z}_1, \nabla \psi^{\astfenchel}(\tilde{z}_2)-\nabla \psi^{\astfenchel}(\tilde{z}_1)} \geq 0
    \] 
    Using $\stcvxpsi$-strong convexity of $\psi$ and the previous inequality we have
    \begin{align*}
    \begin{aligned}
        \breg[\psi^{\astfenchel}](\tilde{z}_1, \tilde{z}_2) &= \psi(\nabla \psi^{\astfenchel}(\tilde{z}_2)) -\psi(\nabla\psi^{\astfenchel}(\tilde{z}_1)) - \innp{\tilde{z}_1, \nabla \psi^{\astfenchel}(\tilde{z}_2)-\nabla \psi^{\astfenchel}(\tilde{z}_1)} \\
        &\geq \frac{\stcvxpsi}{2} \norm{\nabla \psi^{\astfenchel}(\tilde{z}_1)-\nabla \psi^{\astfenchel}(\tilde{z}_2)}^2 + \innp{\nabla \psi(\nabla\psi^{\astfenchel}(\tilde{z}_1))-\tilde{z}_1, \nabla\psi^{\astfenchel}(\tilde{z}_2)-\nabla\psi^{\astfenchel}(\tilde{z}_1)} \\
        &\geq \frac{\stcvxpsi}{2} \norm{\nabla \psi^{\astfenchel}(\tilde{z}_1)-\nabla \psi^{\astfenchel}(\tilde{z}_2)}^2.
    \end{aligned}
    \end{align*}
\end{proof}   

\section[Reductions. Proofs of results in Section \ref{sec:reductions}]{Reductions. Proofs of results in \cref{sec:reductions}} \label{app:reductions}

\begin{proof}\textbf{of \cref{thm:reduction_to_g_convex}.}\linkofproof{thm:reduction_to_g_convex}
    Let $\Algns$ be the algorithm in the statement of the theorem. By strong g-convexity of $\F$ and the assumptions on $\Algns$ we have that $\hat{x}_T$, the point computed by $\Algns$, satisfies
    \[
        \frac{\mu}{2}\dist(\hat{x}_T, \xast[\notilde])^2 \leq \F(\hat{x}_T)-\F(\xast[\notilde]) \leq \frac{\mu}{2}\frac{\RR^2}{2},
    \]
    after $T=\timens(\L, \mu, \RR)$ queries to the gradient oracle. This implies $\dist(\hat{x}_T, \xast[\notilde])^2 \leq \RR^2/2$. We perform this process $r\defi \lceil\log (\mu \RR^2/\epsilon)-1\rceil$ times. We use the previous output as input for the next round. The distance bound to $\xast[\notilde]$ can be updated to a lower value. We denote $R_i$ the distance bound from the input to $\xast[\notilde]$ at stage $i$ and we set its value to $R_i = R_{i-1}/\sqrt{2}$, for $R_1 = \RR$. Thus, after $r$ stages we obtain a point $\hat{x}_T^r$ that satisfies
    \[
        \F(\hat{x}_T^r)-\F(\xast[\notilde]) \leq \frac{\mu \cdot R_{r}^2 }{4} = \frac{\mu \cdot R_1^2}{4 \cdot 2^{r-1}} \leq \epsilon.
    \]
    And the total running time is $\timens(\L, \mu, \RR)\cdot r = O(\timens(\L, \mu, \RR)\log(\mu\RR^2/\epsilon))$.
\end{proof}

\begin{proof}\textbf{of \cref{coroll:acceleration_st_g_convex}.}\linkofproof{coroll:acceleration_st_g_convex}
    We can assume without loss of generality $\K\in\{-1, 1\}$ as we did in \cref{sec:algorithm}. Let $\RR$ be an upper bound on the distance between the initial point $\xInit[\notilde]$ and an optimizer $\xast[\notilde]$, i.e., $\dist(\xInit[\notilde], \xast[\notilde])\leq \RR$. Note that $\norm{\xInit-\xast}/\RR$ is bounded by a constant depending on $\RR$ by \cref{lemma:deformations}.a. Note that $\gamman$ and $\gammap$ are constants depending on $\RR$ by \cref{prop:bounding_hyperplane}. As any g-strongly convex function is g-convex, by using \cref{thm:accelerated_axgd_modified} and \cref{lemma:binary_search} with $\epsilon=\mu \frac{\RR^2}{4}$ we obtain that \cref{alg:accelerated_gconvex} obtains a $\mu \frac{\RR^2}{4}$-minimizer in at most 
\[
    T = \bigotildel{\frac{\norm{\xInit-\xast}}{\RR}\sqrt{\frac{4L(\Rglobal+1)}{\mu\gamman[2]\gammap }}} = \bigotildel{\sqrt{\L(\Rglobal+1)/\mu}}
\] 
    queries to the gradient oracle. Subsequent stages, i.e., calls to \cref{alg:accelerated_gconvex}, use the point computed at the previous stage as its input. The distance bound to $\xast[\notilde]$ is updated, following the proof of \cref{thm:reduction_to_g_convex}. Because the constant depending on $\RR$ in the running time of the subroutine decreases when $\RR$ has a lower value, subsequent stages need a time which is $\bigotilde{(\L(\Rglobal+1)/\mu)^{1/2}}$ as well. So we satisfy the assumption of \cref{thm:reduction_to_g_convex} for $\timens(\L, \mu, \RR)=\bigotilde{(\L(\Rglobal+1)/\mu)^{1/2}}$. We conclude that given $\epsilon>0$ and running \cref{alg:accelerated_gconvex} in stages, we obtain an $\epsilon$-minimizer of $\F$ in 
    \[
        \bigotilde{\sqrt{\L(\Rglobal+1)/\mu}\log(\mu\RR^2/\epsilon)} = \bigotilde{\sqrt{\L(\Rglobal+1)/\mu}}
    \] 
    queries to the gradient oracle.

    Note that each time we call \cref{alg:accelerated_gconvex} we recenter the geodesic map. In order to perform the method with these recentering steps, we need the function $\F$ to be defined over at least $\expon{\xInit[\notilde]}(\ball(0, \RR\cdot(1+2^{-1/2})))$, since subsequent centers are only guaranteed to be $\leq \RR/\sqrt{2}$ close to $\xast[\notilde]$, and they could get slightly farther than $\RR$ from $\xInit[\notilde]$. But they are no farther than $\RR+\RR/\sqrt{2}$ since $\dist(\xInit[\notilde], \hat{x}_T^i) \leq \dist(\xInit[\notilde], \xast[\notilde])+ \dist(\xast[\notilde], \hat{x}_T^i) \leq \RR+\RR/\sqrt{2}$, where $\hat{x}_T^i$ is the center at stage $i$, and where $i>1$.
\end{proof}

\subsection[Proof of Theorem \ref{thm:reduction_to_g_st_convex}]{Proof of \cref{thm:reduction_to_g_st_convex}}

We provide here the full statement of the theorem.

\begin{theorem}\label{thm:reduction_to_g_st_convex_full_theorem}
    Let $\M$ be a Riemannian manifold of bounded sectional curvature, let $\F:\M\to\R$ be a differentiable function with a point $\xast[\notilde]\in\M$ such that $\nabla \F(\xast[\notilde])=0$. Let $\xInit[\notilde]$ be a starting point such that $\dist(\xInit[\notilde],\xast[\notilde]) \leq \RR$ and let $\Delta$ satisfy $\F(\xInit[\notilde])-\F(\xast[\notilde]) \leq \Delta $. Assume $\F$ is an $\L$-smooth and g-convex function in $\X \defi\expon{\xInit[\notilde]}(\ball(0, \RR))$ and that $\expon{\xInit[\notilde]}$ is a diffeomorphism when restricted to $\ball(0,\RR)$. Further, we assume access to an algorithm $\Alg$ that given an $\L$-smooth and $\mu$-strongly g-convex function $\hat{F}:\M\to\R$ in $\X$, with minimizer in $\X$, and any initial point $\hat{x}_0\in\M$, produces a point $\hat{x}\in\X$ in time $\hat{T}=\time(\L,\mu,\M, \RR)$ satisfying $\hat{F}(\hat{x})-\min_{x\in\M}\hat{F}(x) \leq (\hat{F}(\hat{x}_0) - \min_{x\in\M}\hat{F}(x))/4$. Let $T=\lceil\log_2(\Delta/\epsilon)\rceil+1$. Then, we can compute an $\epsilon$-minimizer in time $\sum_{t=0}^{T-1}\time(\L+2^{-t}\Delta\distorn/\RR^2, 2^{-t}\Delta\distorp/\RR^2, \M, \RR)$, where $\distorp$ and  $\distorn$ are constants that depend on $\RR$ and the bounds on the sectional curvature of $\M$.
\end{theorem}

    The algorithm is the following. We successively regularize the function with strongly g-convex regularizers in this way $\newtarget{def:regularized_F_for_reduction}{\FReg[\mui{i}]}(x) \defi \F(x)+ \frac{\mui{i}}{2}\dist(x, \xInit[\notilde])^2$ 
    for $i \geq 0$. For each $i \geq 0$, we use the algorithm $\Alg$ on the function $\FReg[\mui{i}]$ 
    for the time in the statement of the theorem and obtain a point $\hat{x}_{i+1}$, starting from point $\hat{x}_i$, where $\hat{x}_0=\xInit[\notilde]$. The regularizers are decreased exponentially $\newtarget{def:regularization_in_reduction}{\mui{i+1}} = \mui{i}/2$ 
    from $\mui{0}=\Delta/\RR^2$, until we reach roughly $\mui{T}=\epsilon/\RR^2$, see below for the precise value. Let's see how this algorithm works. We first state the following fact, that says that indeed $\frac{\mui{i}}{2}\dist(x, \xInit[\notilde])^2$ is a strongly g-convex  regularizer. Recall that $\expon{\xInit[\notilde]}(\ball(0,\RR))\subset \M$. We define the following quantities
    \begin{equation}\label{eq:def_distortion_pos}
    \newtarget{def:distortion_K_pos}{\distorp} \defi 
        \begin{cases} 
            {1} & {\text{if } K_{\max } \leq 0} \\
            {\sqrt{K_{\max }} \RR \cot (\sqrt{K_{\max }} \RR)} & {\text{if }  K_{\max }>0}
        \end{cases}
    \end{equation}

    \begin{equation}\label{eq:def_distortion_neg}
    \newtarget{def:distortion_K_neg}{\distorn} \defi 
        \begin{cases} 
            {\sqrt{-K_{\min }} \RR \operatorname{coth}(\sqrt{-K_{\min }} \RR)} & {\text{if } K_{\min }<0} \\
            {1} & {\text{if } K_{\min } \geq 0}
        \end{cases}
    \end{equation}
Here $K_{\max}$ and $K_{\min}$ are the upper and lower bounds on the sectional curvature of the manifold $\M$.

\begin{fact}\label{strong_convexity_and_smoothness_of_ell_2}
    Let $\M$ be a manifold with sectional curvature bounded below and above by $K_{\min}$ and $K_{\max}$, respectively. For a point $\xInit[\notilde]\in\M$, assume $\expon{\xInit[\notilde]}$ is a diffeomorphism when restricted to $\ball(0,\RR)$. The function $f:\M\to\R$ defined as $\ftilted(x) = \frac{1}{2}\dist(x, \xInit[\notilde])^2$ is $\distorp$-g-strongly convex and $\distorn$-smooth in $\expon{\xInit[\notilde]}(\ball(0,\RR))$.
\end{fact}

The result regarding strong convexity can be found, for instance, in \citep{alimisis2019continuous} and it is a direct consequence of the following inequality, which can also be found in \citep{alimisis2019continuous}:
\begin{equation}\label{eq:cosine_ineq_delta}
    \dist(y,\xInit[\notilde])^2 \geq \dist(x,\xInit[\notilde])^2-2\innp{\exponinv{x}(\xInit[\notilde]), y\riemMinus x} + \distorp \dist(x,y)^2,
\end{equation}
along with the fact that $\operatorname{grad} \ftilted(x) = -\exponinv{x}(\xInit[\notilde])$. The result regarding smoothness is, similarly, obtained from the following inequality:
\begin{equation}\label{eq:cosine_ineq_zeta}
    \dist(y,\xInit[\notilde])^2 \leq \dist(x,\xInit[\notilde])^2-2\innp{\exponinv{x}(\xInit[\notilde]), y\riemMinus x} + \distorn \dist(x,y)^2,
\end{equation}
which can be found in \citep{zhang2016first} (Lemma $6$). These inequalities are tight in spaces of constant sectional curvature. Alternatively, one can derive these inequalities from upper and lower bounds on the Hessian of $\ftilted(x) = \frac{1}{2}\dist(x, \xInit[\notilde])^2$, as it was done in \citep{lezcano2020curvature} (Theorem 3.15).

We prove now that the regularization makes the minimum be closer to $\xInit[\notilde]$, so the assumption of the theorem on $\hat{F}$ holds for the functions we use. Define $x_{i+1}$ as the minimizer of $\FReg[\mui{i}]$.

\begin{lemma}\label{claim:reduction_sc}
We have $\dist(x_{i+1},\xInit[\notilde]) \leq \dist(\xast[\notilde], \xInit[\notilde])$.
\end{lemma}

\begin{proof}
    By the fact that $x_{i+1}$ is the minimizer of $\FReg[\mui{i}]$ we have $\FReg[\mui{i}](x_{i+1})-\FReg[\mui{i}](\xast[\notilde]) \leq 0$. Note that by g-strong convexity, equality only holds if $x_{i+1} = \xast[\notilde]$ which only happens if $\xInit[\notilde]=x_{i+1}=\xast[\notilde]$. By using the definition of $\FReg[\mui{i}](x) = \F(x) + \frac{\mui{i}}{2}\dist(x,\xInit[\notilde])^2$ we have:
\begin{align*} 
 \begin{aligned}
 \F(x_{i+1}) & + \frac{\mui{i}}{2} \dist(x_{i+1},\xInit[\notilde])^2 - \F(\xast[\notilde])-\frac{\mui{i}}{2}\dist(\xast[\notilde],\xInit[\notilde])^2 \leq 0 \\
    \Rightarrow  \ \ \ & \dist(x_{i+1},\xInit[\notilde]) \leq \dist(\xast[\notilde],\xInit[\notilde]),
   \end{aligned}
\end{align*}
    where in the last step we used the fact $\F(x_{i+1}) - \F(\xast[\notilde]) \geq 0$ that holds because $\xast[\notilde]$ is the minimizer of $\F$.
\end{proof}

We note that previous techniques proved and used the fact that $\dist(x_{i+1}, \xast[\notilde]) \leq \dist(\xInit[\notilde], \xast[\notilde])$ instead \citep{allen2016optimal}. But crucially, we need our former lemma in order to prove the bound for our non-Euclidean case. Our variant can be applied to \citep{allen2016optimal} to decrease the constants of their Euclidean reduction. Now we are ready to prove the theorem.

\begin{proof}\textbf{of \cref{thm:reduction_to_g_st_convex}.}\linkofproof{thm:reduction_to_g_st_convex}
    We recall the definitions above. $\FReg[\mui{i}](x) = \F(x)+\frac{\mui{i}}{2}\dist(x,\xInit[\notilde])^2$. We start with $\hat{x}_0=\xInit[\notilde]$ and compute $\hat{x}_{i+1}$ using algorithm $\Alg$ with starting point $\hat{x}_i$ and function $\FReg[\mui{i}]$ for time $\time(L^{(i)}, \mu^{(i)}, \M, \RR)$, where $L^{(i)}$ and $\mu^{(i)}$ are the smoothness and strong g-convexity parameters of $\FReg[\mui{i}]$. We denote by $x_{i+1}$ the minimizer of $\FReg[\mui{i}]$. We pick $\mui{i}=\mui{i-1}/2$ and we will choose later the value of $\mui{0}$ and the total number of stages. By the assumption of the theorem on $\Alg$, we have that 
    \begin{equation} \label{eq:aux_eq_reduction_to_g_st_convex}
        \FReg[\mui{i}](\hat{x}_{i+1})-\min_{x\in\M} \FReg[\mui{i}](x) = \FReg[\mui{i}](\hat{x}_{i+1})- \FReg[\mui{i}](x_{i+1}) \leq \frac{\FReg[\mui{i}](\hat{x}_{i})- \FReg[\mui{i}](x_{i+1})}{4}.
    \end{equation}

    Define $\newtarget{def:gap_for_reduction_tost_g_convex}{\Di[i]} \defi \FReg[\mui{i}]\left(\hat{x}_{i}\right)-\FReg[\mui{i}]\left(x_{i+1}\right)$ to be the initial objective distance to the minimum on function $\FReg[\mui{i}]$ before we call $\Alg$ for the $(i+1)$-th time. At the beginning, we have the upper bound $\Di[0]=\FReg[\mui{0}](\hat{x}_{0})- \min_{x} \FReg[\mui{0}](x) \leq \F(x_{0})-\F(\xast[\notilde])$. For each stage $i \geq 1$, we compute that
\begin{align*}
 \begin{aligned}
     \quad& \Di[i] = \FReg[\mui{i}]\left(\hat{x}_{i}\right)-\FReg[\mui{i}]\left(x_{i+1}\right) \\
    &\circled{1}[=] \FReg[\mui{i-1}]\left(\hat{x}_{i}\right)-\frac{\mui{i-1}-\mui{i}}{2}\dist(\xInit[\notilde],\hat{x}_{i})^{2}-\FReg[\mui{i-1}]\left(x_{i+1}\right)+\frac{\mui{i-1}-\mui{i}}{2}\dist(\xInit[\notilde],x_{i+1})^{2} \\
    &\circled{2}[\leq] \FReg[\mui{i-1}]\left(\hat{x}_{i}\right)-\FReg[\mui{i-1}]\left(x_{i}\right)+\frac{\mui{i-1}-\mui{i}}{2}\dist(\xInit[\notilde],x_{i+1})^{2} \\
     &\circled{3}[\leq] \frac{\Di[i-1]}{4} + \frac{\mui{i}}{2}\dist(\xInit[\notilde],x_{i+1})^{2} \circled{4}[\leq] \frac{\Di[i-1]}{4}+\frac{\mui{i}}{2}\dist(\xInit[\notilde],\xast[\notilde])^{2}.
   \end{aligned}
\end{align*}
    Above, $\circled{1}$ follows from the definition of $\FReg[\mui{i}](\cdot)$ and $\FReg[\mui{i-1}](\cdot)$; $\circled{2}$ follows from the fact that $x_{i}$ is the minimizer of $\FReg[\mui{i-1}](\cdot)$. And we dropped the negative term $-(\mui{i-1}-\mui{i})\dist(\xInit[\notilde], \hat{x}_i)/2$. $\circled{3}$ follows from the definition of $\Di[i-1]$, the assumption on $\Alg$, and the choice $\mui{i} = \mui{i-1}/2$ for $i\geq 1$; and $\circled{4}$ follows from \cref{claim:reduction_sc}. 
Now applying the above inequality recursively, we have
    \begin{equation}\label{eq:aux_eq_bounding_D_T}
        \Di[T] \leq \frac{\Di[0]}{4^T} + \dist(\xInit[\notilde],\xast[\notilde])^2 \cdot (\frac{\mui{T}}{2} + \frac{\mui{T-1}}{8} + \cdots) \leq \frac{\F(\xInit[\notilde])-\F(\xast[\notilde])}{4^T} +  \mui{T} \cdot \dist(\xInit[\notilde],\xast[\notilde])^2.
\end{equation}
We have used the choice $\mui{i} = \mui{i-1}/2$ for the second inequality. Lastly, we can prove that $\hat{x}_T$, the last point computed, satisfies 
\begin{align*}
 \begin{aligned}
 \F(\hat{x}_{T})-\F(\xast[\notilde]) &\circled{1}[\leq] \FReg[\mui{T}](\hat{x}_{T})-\FReg[\mui{T}](\xast[\notilde])+\frac{\mui{T}}{2}\dist(\xInit[\notilde],\xast[\notilde])^{2} \\
         & \circled{2}[\leq] \FReg[\mui{T}](\hat{x}_{T})-\FReg[\mui{T}](x_{T+1}) +\frac{\mui{T}}{2}\dist(\xInit[\notilde],\xast[\notilde])^{2} \\ 
         &\circled{3}[=] \Di[T]+\frac{\mui{T}}{2}\dist(\xInit[\notilde],\xast[\notilde])^{2} \\
         &\circled{4}[\leq] \frac{\F(\xInit[\notilde])-\F(\xast[\notilde])}{4^{T}}+ \frac{3\mui{T}}{2} \dist(\xInit[\notilde],\xast[\notilde])^{2}.
   \end{aligned}
\end{align*}

    We use the definition of $\FReg[\mui{T}]$ in $\circled{1}$ and drop $-\frac{\mui{T}}{2}\dist(\xInit[\notilde],\hat{x}_T)^2$. In $\circled{2}$ we use the fact that $x_{T+1}$ is the minimizer of $\FReg[\mui{T}]$. The definition of $\Di[T]$ is used in $\circled{3}$. We use inequality \eqref{eq:aux_eq_bounding_D_T} for step $\circled{4}$. Recall the assumption of the theorem $\F(\xInit[\notilde])-\F(\xast[\notilde])\leq\Delta$. Finally, by choosing $T = \lceil \log_2(\Delta/\epsilon)\rceil+1$ and $\mui{0} = \Delta/\RR^2$ we obtain that the point $\hat{x}_T$ satisfies 
    \[
    \F(\hat{x}_T)-\F(\xast[\notilde]) \leq \frac{\F(\xInit[\notilde])-\F(\xast[\notilde])}{4\Delta/\epsilon}+ \frac{3\mui{0}}{8\Delta/\epsilon} \dist(\xInit[\notilde],\xast[\notilde])^{2} \leq \frac{\epsilon}{4}+\frac{3\epsilon}{8}<\epsilon,
    \] 
    and can be computed in time $\sum_{t=0}^{T-1} \time(\L+2^{-t}\mui{0} \distorn, 2^{-t}\mui{0} \distorp, \M, \RR)$, since by \cref{strong_convexity_and_smoothness_of_ell_2} the function $\FReg[\mui{t}]$ is $\L+2^{-t}\mui{0} \distorn$ smooth and $2^{-t}\mui{0} \distorp$ g-strongly convex.
\end{proof}

\subsection[Example \ref{example:application_of_reduction_to_st_convex}]{\cref{example:application_of_reduction_to_st_convex}}\linkofproof{example:application_of_reduction_to_st_convex}
We use the algorithm in \cref{coroll:acceleration_st_g_convex} as the algorithm $\Alg$ of the reduction of \cref{thm:reduction_to_g_st_convex}. Given a manifold under consideration $\Mk$, the assumption on $\Alg$ is satisfied for $\time(\L, \mu, \Mk, \RR)=\bigotilde{\sqrt{\L(\Rglobal+1)/\mu}}$. Indeed, if $\Delta$ is a bound on the gap $\hat{F}(\xInit[\notilde])-\hat{F}(\xast[\notilde]) = \hat{F}(\xInit[\notilde])-\min_{x\in\expon{\xInit[\notilde]}(\ball(0, \RR))}\hat{F}(x)$ for some $\mu$-strongly g-convex $\hat{F}$, then we know that $\dist(\xInit[\notilde], \xast[\notilde])^2 \leq \frac{2\Delta}{\mu}$ by $\mu$-strong g-convexity. By calling the algorithm in \cref{coroll:acceleration_st_g_convex} with $\epsilon = \frac{\Delta}{4}$ we require a time that is 
\begin{align*}
 \begin{aligned}
     \bigo{ \sqrt{\L(\Rglobal+1)/\mu}\log(\mu \cdot \dist(\xInit[\notilde], \xast[\notilde])^2/(\Delta/4))}&= \bigotilde{ \sqrt{\L(\Rglobal+1)/\mu}\log(\mu \cdot (2\Delta/\mu)/(\Delta/4)) } \\
     &= \bigotilde{ \sqrt{\L(\Rglobal+1)/\mu} }.
   \end{aligned}
\end{align*}

Let $T = \lceil \log_2(\Delta/\epsilon)\rceil+1$. The reduction of \cref{thm:reduction_to_g_st_convex} gives an algorithm with rates
\begin{align*}
 \begin{aligned}
     \sum_{t=0}^{T-1}& \time(\L+2^{-t}\mui{0} \distorn, 2^{-t}\mui{0} \distorp, \Mk, \RR)   \\
     &\circled{1}[=] \bigotildel{\sum_{t=0}^{T-1}\sqrt{\frac{\distorn}{\distorp}+\frac{\L}{2^{-t}\distorp\Delta/\RR^2}}\sqrt{\Rglobal+1}}\\
     &\circled{2}[=] \bigotildel{\left(\sqrt{\frac{\distorn}{\distorp}}\log(\Delta/\epsilon)+\sum_{t=0}^{T-1}\sqrt{\frac{\L}{2^{-t}\distorp\Delta/\RR^2}}\right)\sqrt{\Rglobal+1}} \\
     &\circled{3}[=] \bigotildel{\left(\sqrt{\frac{\distorn}{\distorp}}\log(\Delta/\epsilon)+\sqrt{\frac{\L}{\distorp\epsilon}}\right)\sqrt{\Rglobal+1}} \\
     &\circled{4}[=] \bigotilde{\sqrt{\L(\Rglobal+1)/\epsilon}}
   \end{aligned}
\end{align*}
In $\circled{1}$ we write down the definition and use the value $\mui{0}=\Delta/\RR^2$. In $\circled{2}$ we have used Minkowski's inequality $\sqrt{a+b} \leq \sqrt{a} + \sqrt{b}$. We added up the first group of summands. For the $\log$ factor, we upper bounded $\L/(2^{-t}\distorp\Delta/\RR^2) = O(\L/\distorp\epsilon)$, for $t< T$. In $\circled{3}$ we used the fact that $\sqrt{1/\epsilon} + \sqrt{1/2\epsilon} + \cdots = O(\sqrt{1/\epsilon})$, along with the fact $\epsilon/2R^2 \leq 2^{-(T-1)}\mui{0}\leq \epsilon/\RR^2$. Note that by $\L$-smoothness and the diameter being $2R$, we have $\Delta \leq 2LR^2$ so $\sqrt{\distorn/\distorp}\log(\Delta/\epsilon) = \bigotilde{1}$. We applied this in $\circled{4}$.

\section{Geometric results. Proofs of Lemmas \ref{lemma:deformations}, \ref{prop:bounding_hyperplane} and \ref{lemma:smoothness_of_transformed_function}} \label{app:geometric_results}
In this section we prove the lemmas that take into account the deformations of the geometry and the geodesic map $\h$ to obtain relationships between $\F$ and $\f$. Namely \cref{lemma:deformations}, \cref{prop:bounding_hyperplane} and \cref{lemma:smoothness_of_transformed_function}. First, we recall the characterizations of the geodesic map and some consequences. Then in \cref{app:sec_distance_deformation}, \cref{app:sec_angle_deformation} and \cref{sec:app_gradient_deformation_and_smoothness}, we prove the results related to distances angles and gradient deformations, respectively. That is, each of the three parts of \cref{lemma:deformations}. In \cref{sec:app_gradient_deformation_and_smoothness} we also prove \cref{lemma:smoothness_of_transformed_function}, which comes naturally after the proof of \cref{lemma:deformations}.c. In \cref{app:sec_proof_of_bounding_hyperplane} we prove \cref{prop:bounding_hyperplane} and finish with a proof on lower bounds for the condition number of strongly g-convex functions and an intuitive comment on its implications. 

Before this, we note that we can assume without loss of generality that the curvature of our manifolds of interest can be taken to be $\K\in\{1, -1\}$. One can see that the final rates depend on $\K$ through $\RR$, $\L$ and $\mu$.

\begin{remark}\label{remark:rescaling_of_K}
    For a function $\F:\Mk\to\R$ where $\Mk$ is a manifold of constant sectional curvature $\K\not\in\{1, -1, 0\}$, we can apply a rescaling to the Gnomonic or Beltrami-Klein projection to define a function on a manifold of constant sectional curvature $\K\in\{1, -1\}$. Namely, we can map $\Mk$ to $\MEucl$ via the geodesic map $\h:\Mk\to \MEucl$, then we can rescale $\MEucl$ by multiplying each vector in $\MEucl$ by the factor $\sqrt{\abs{\K}}$ and then we can apply the inverse geodesic map for the manifold of curvature $\K\in\{1, -1\}$. If $\RR$ is the original bound of the initial distance to an optimum, and $\F$ is $\L$-smooth and $\mu$-strongly g-convex (possibly with $\mu=0$) then the initial distance bound becomes $\sqrt{\abs{\K}}\RR$ and the induced function becomes $\L/\abs{\K}$-smooth and $\mu/\abs{\K}$-strongly g-convex. This is a consequence of the transformation rescaling distances by a factor of $\sqrt{\abs{\K}}$, i.e., if $r:\Mk \to \Mk[\K/\abs{\K}]$ is the rescaling function, then $d_{\K}(x,y)\sqrt{\abs{\K}} = d_{\K/\abs{\K}}(r(x),r(y))$, where $d_c(\cdot, \cdot)$ denotes the distance on the manifold of constant sectional curvature $c$.
\end{remark}

\subsection{Preliminaries}
We recall our characterization of the geodesic map. Given two points $\tilde{x}, \tilde{y}\in \X$, we have that $\dist(x,y)$, the distance between $x$ and $y$ with the metric of $\Mk$, satisfies
\begin{equation}\label{eq:appendix_characterization_of_geodesic_map_and_metric}
    \ck[\dist(x,y)]  = \frac{1+\K\innp{\tilde{x}, \tilde{y}}}{\sqrt{1+\K\norm{\tilde{x}}^2}\cdot\sqrt{1+\K\norm{ \tilde{y}}^2}}.
\end{equation}
And since the expression is symmetric with respect to rotations, $\X=\h(\BR)$ is a closed ball of radius $\Rtilde$, with $\ck[\RR] = (1+\K\Rtilde^2)^{-1/2}$. Equivalently, 
\begin{align} \label{eq:R_tilde_vs_R}
 \begin{aligned}
     \Rtilde = \tan(\RR)  &\ & & \text{ if } \K=1, \\
     \Rtilde = \tanh(\RR)  &\ & & \text{ if } \K=-1.
   \end{aligned}
\end{align}
Similarly, we can write the distances as
\begin{align} \label{eq:distances}
 \begin{aligned}
     \dist(x, y) = \arccos\left(\frac{ 1 + \innp{\tilde{x}, \tilde{y}}}{\sqrt{1+\norm{\tilde{x}^2}}\sqrt{1+\norm{\tilde{y}^2}}}\right)  &\ & & \text{ if } \K=1, \\
     \dist(x, y) = \arccosh\left(\frac{ 1 - \innp{\tilde{x}, \tilde{y}}}{\sqrt{1-\norm{\tilde{x}^2}}\sqrt{1-\norm{\tilde{y}^2}}}\right)  &\ & & \text{ if } \K=-1, 
   \end{aligned}
\end{align}
Alternatively, we have the following expression for the distance $\dist(x, y)$ when $\K=-1$. Let $\tilde{a}, \tilde{b}$ be the two points of intersection of the ball $B(0,1)\supseteq \MEucl$ with the line joining $\tilde{x}, \tilde{y}$, so the order of the points in the line is $\tilde{a}, \tilde{x}, \tilde{y}, \tilde{b}$. Then
\begin{equation} \label{eq:distances_hyper_alt}
    \dist(x, y) = \frac{1}{2}\log\left(\frac{\norm{\tilde{a}-\tilde{y}}\norm{\tilde{x}-\tilde{b}}}{\norm{\tilde{a}-\tilde{x}}\norm{\tilde{b}-\tilde{y}}}\right) \text{ if } \K=-1.
\end{equation}
We will use this expression when working with the hyperbolic space. A simple elementary proof of the equivalence of the expressions in \eqref{eq:distances} and \eqref{eq:distances_hyper_alt} when $\K=-1$ is the following. We can assume without loss of generality that we work with the hyperbolic plane, i.e., $\n=2$. By rotational symmetry, we can also assume that $\tilde{x} = (x_1, x_2)$ and $\tilde{y} = (y_1, y_2)$, for $x_1=y_1$. In fact, it is enough to prove it in the case $x_2=0$ because we can split a general segment into two, each with one endpoint at $(x_1, 0)$, and then add their lengths up. So according to \eqref{eq:distances} and \eqref{eq:distances_hyper_alt}, respectively, we have
\[
    \frac{1}{\cosh^2(\dist(x, y))} =\frac{(1-x_1^2)(1-y_1^2-y_2^2)}{(1-x_1^2)^2} =  \frac{(1-x_1^2-y_2^2)}{1-x_1^2},
\] 
\begin{align*} 
 \begin{aligned}
     \dist(x, y)&=\frac{1}{2}\log \left(\frac{(\sqrt{1-y_1^2}+y_2)(\sqrt{1-x_1^2})}{(\sqrt{1-x_1^2})(\sqrt{1-y_1^2}-y_2)}\right) = \frac{1}{2}\log\left(\frac{1+y_2/\sqrt{1-x_1^2}}{1-y_2/\sqrt{1-x_1^2}}\right) \\
     &= \arctanh\left(\frac{y_2}{\sqrt{1-x_1^2}}\right).
   \end{aligned}
\end{align*}
where we have used the equality $\arctanh(t) = \frac{1}{2}\log(\frac{1+t}{1-t})$. Now, using the trigonometric identity $\frac{1}{\cosh^2(t)} = 1-\tanh^2(t)$, for $t=\dist(x, y)$, we obtain that the two expressions above are equal. See Theorem 7.4 in \citep{greenberg1993euclidean} (p. 268) for more details about the distance formula under this geodesic map.

The spherical case is of a remarkable simplicity. If we have an $(\n)$-sphere of radius $1$ centered at $0$, we can see the transformation of the geodesic map as the projection onto the plane $x_{\n+1}=1$. Given two points $\mathbf{x}=(\tilde{x}, 1)$, $\mathbf{y}=(\tilde{y}, 1)$ then the angle between these two vectors is the distance of the projected points on the sphere so we have $\cos(\dist(x,y))=\innp{\mathbf{x}, \mathbf{y}}/\norm{\mathbf{x}}\norm{\mathbf{y}}$ which is equivalent to the corresponding formula in \eqref{eq:distances}.

\subsection{Distance deformation} \label{app:sec_distance_deformation}

\begin{lemma} \label{lemma:distances_hyperbolic} 
    Let $x,y\in\BR=\expon{\xInit[\notilde]}(\ball(0, \RR))\subseteq \Mk$ be two different points, where $\Mk$ is the hyperbolic space with constant sectional curvature $\K=-1$. Then, we have
    \[
        1 \leq \frac{\dist(x, y)}{\norm{\tilde{x}-\tilde{y}}} \leq \cosh^2(\RR).
    \]
\end{lemma}
\begin{proof}
    We can assume without loss of generality that the dimension is $\n=2$. As in \eqref{eq:R_tilde_vs_R}, let $\Rtilde=\tanh(\RR)$, so any point $\tilde{x}\in\X$ satisfies $\norm{\tilde{x}} \leq \Rtilde$, or equivalently $\dist(x, \xInit[\notilde]) \leq \RR$.  Recall $\xInit=\h(\xInit[\notilde])=0$. Without loss of generality, we parametrize an arbitrary segment of length $\ell$ in $\X$ by two endpoints $\tilde{x}, \tilde{y}$ with coordinates $\tilde{x}=(x_1, x_2)$ and $\tilde{y}=(x_1-\ell, x_2)$, for $0\leq x_2\leq \Rtilde$, $0\leq x_1 \leq\sqrt{\Rtilde^2-x_2^2}$ and $0<\ell \leq x_1+\sqrt{\Rtilde^2-x_2^2}$. Let $\mathfrak{d}(x_1, x_2, \ell) \defi \frac{\dist(x,y)}{\ell}$, the quantity we aim to bound. We will prove the upper bound on $\mathfrak{d}(x_1, x_2, \ell)$ in three steps. 
    \begin{enumerate}
       \item  If $x_1 = \ell$ then $\mathfrak{d}(\cdot)$ is larger the larger $x_1$ is. This allows to prove that it is enough to consider points with the extra constraint $\ell \leq x_1$.

       \item The partial derivative of $\mathfrak{d}(\cdot)$ with respect to $x_1$, whenever $\ell \leq x_1$, is non-negative. So we can just look at the points for which $x_1=\sqrt{\Rtilde^2-x_2^2}$.

       \item With the constraints above, $\mathfrak{d}(\cdot)$ is larger the smaller $\ell$ is. So we have 
           \[
           \mathfrak{d}(x_1, x_2, \ell) \leq \lim_{\ell\to 0} \mathfrak{d}(\sqrt{\Rtilde^2-x_2^2}, x_2, \ell) = \sqrt{1-x_2^2}/(1-\Rtilde^2).
           \] 
           This expression is maximized at $x_2=0$ and evaluates to $1/(1-\tanh^2(\RR)) = \cosh^2(\RR)$.
   \end{enumerate} 
   We proceed now to prove the steps above. For the first step, we note
   \[
       \mathfrak{d}(x_1, x_2, x_1) = \frac{1}{2x_1}\log\left(\frac{\sqrt{1-x_2^2}(\sqrt{1-x_2^2}+x_1)}{\sqrt{1-x_2^2}(\sqrt{1-x_2^2}-x_1)}\right) = \frac{1}{2x_1}\log\left(1+\frac{2x_1}{\sqrt{1-x_2^2}-x_1}\right).
   \]
   We prove that the inverse of this expression is not increasing with respect to $x_1$. By taking a partial derivative:
\begin{align*} 
 \begin{aligned}
     \frac{\partial (1/\mathfrak{d}(x_1, x_2,x_1))}{\partial x_1} = 2\frac{\frac{-2x_1\sqrt{1-x_2^2}}{1-x_2^2-x_1^2}+\log(1+2x_1/(\sqrt{1-x_2^2}-x_1))}{\log(1+2x_1/(\sqrt{1-x_2^2}-x_1))^2} \stackrel{?}{\leq} 0  \\
    \iff \frac{2x_1\sqrt{1-x_2^2}}{1-x_2^2-x_1^2}-\log(1+(2x_1\sqrt{1-x_2^2}+2x_1^2)/(1-x_2^2-x_1^2)) \stackrel{?}{\geq} 0.
   \end{aligned}
\end{align*}
In order to see that the last inequality is true, note that the expression on the left hand side is $0$ when $x_1 = x_2 = 0$. And the partial derivatives of this with respect to $x_1$ and $x_2$, respectively, are:
\[
   \frac{4\sqrt{1-x_2^2} x_1^2}{(1-x_2^2-x_1^2)^2} \text{ and } \frac{4x_2x_1^3}{\sqrt{1-x_2^2}(1-x_2^2-x_1^2)^2} .
\]
Both are greater than $0$ in the interior of the domain $0\leq x_2\leq \Rtilde$, $0\leq x_1 \leq\sqrt{\Rtilde^2-x_2^2}$ and at least $0$ in the border.
    Now we use this monotonicity to prove that we can consider $\ell \leq x_1$ only. Suppose $\ell > x_1$. The segment $\ell$ is divided into two parts by the $e_2$ axis and we can assume without loss of generality that the negative part is no greater than the other, i.e., $x_1 \geq \ell - x_1$. Otherwise, we can perform the computations after a symmetry over the $e_2$ axis. Let $\tilde{r}$ be the point $(0, x_2)$. We want to see that the segment from $\tilde{x}$ to $\tilde{r}$ gives a greater value of $\mathfrak{d}(\cdot)$:
    \begin{align*} 
     \begin{aligned}
         \frac{\dist(x, r)}{x_1} \geq \frac{\dist(x, y)}{\ell} & \iff \dist(x, r) (x_1 + (\ell-x_1)) \geq x_1( \dist(x, r) + \dist(r, y) )   \\ &\iff \dist(x,r)/x_1 \geq \dist(r, y)/(\ell-x_1),
       \end{aligned}
    \end{align*}
    and the last inequality holds true by the monotonicity we just proved.

    In order to prove the second step, we take the partial derivative of $\mathfrak{d}(x_1, x_2, \ell)$ with respect to $x_1$. We have
    \[
        \mathfrak{d}(x_1, x_2, \ell) = \frac{1}{2\ell}\log\left(\frac{1+\ell/(\sqrt{1-x_2^2}-x_1)}{1-\ell/\sqrt{1-x_2^2}+x_1}\right),
    \]
    \[
        \frac{\partial \mathfrak{d}(x_1, x_2, \ell)}{\partial x_1} = \frac{\sqrt{1-d^2}(2x_1-\ell)}{2(1-x_2^2-x_1^2)(1-x_2^2-(x_1-\ell)^2)}.
    \]
    And the derivative is positive in the domain we are considering.

    We now prove step $3$. We want to show that $\mathfrak{d}(\sqrt{\Rtilde^2-x_2^2}, x_2, \ell\cdot)$ decreases with $\ell$, within our constraints $\ell \leq x_1 = \sqrt{\Rtilde^2-x_2^2}$, $0\leq x_2\leq \Rtilde$. If we split the segment joining $\tilde{x}$ and $\tilde{y}$ in half, with respect to the metric in $\X$, we see that due to the monotonicity proved in step $1$, the segment that is farther to the origin is longer in $\M$ than the other one and so $\mathfrak{d}(\cdot)$ is greater for this half of the segment than for the original one. In symbols, let $\tilde{r}$ be the middle point of the segment joining $\tilde{x}$ and $\tilde{y}$. We have by monotonicity that $\mathfrak{d}(x_1, x_2, \ell/2) \geq \mathfrak{d}(x_1, x_2-\ell/2, \ell/2)$. So $\mathfrak{d}(x_1, x_2, \ell/2) = \frac{\dist(\tilde{x},\tilde{r})}{\ell/2} \geq \frac{\dist(\tilde{x},\tilde{r}) + \dist(\tilde{r}, \tilde{y})}{\ell}  = \mathfrak{d}(x_1, x_2, \ell)$. Thus,
    \begin{align*} 
     \begin{aligned}
        \mathfrak{d}(x_1,x_2, \ell) &\leq \lim_{\ell\to 0} \mathfrak{d}(\sqrt{\Rtilde^2-x_2^2},x_2, \ell) \\
        &= \lim_{\ell\to 0}    \frac{1}{2\ell}\log\left(\frac{1+\ell/\left(\sqrt{1-x_2^2}-\sqrt{\Rtilde^2-x_2^2}\right)}{1-\ell/\left(\sqrt{1-x_2^2}+\sqrt{\Rtilde^2-x_2^2}\right)}\right) \\
        & \circled{1}[=] \lim_{\ell\to 0}  \frac{\sqrt{1-x_2^2}}{1-\Rtilde^2-2\ell\sqrt{\Rtilde^2-x_2^2}+\ell^2} \\
        & = \frac{\sqrt{1-x_2^2}}{1-\Rtilde^2}.
       \end{aligned}
    \end{align*}
        We used L'Hôpital's rule for $\circled{1}$. We can maximize the last the result of the limit by setting $x_2=0$ and obtain that for any two different $\tilde{x}, \tilde{y} \in \X$
    \[
        \frac{\dist(x, y)}{\norm{\tilde{x}-\tilde{y}}} \leq \frac{1}{1-\Rtilde^2}= \frac{1}{1-\tanh^2(\RR)} = \cosh^2(\RR).
    \]

    The lower bound is similar, assume that $\ell>x_1$ and define $\tilde{r}$ as above. We assume again without loss of generality that $x_1\geq \ell-x_1$. Then
    \[
        \frac{\dist(x, r)+\dist(r, y)}{\ell} \geq \frac{\dist(x, r)}{\ell-x_1} \iff \frac{\dist(r, y)}{x_1} \geq \frac{\dist(x,r)}{\ell-x_1}
    \] 
    and the latter is true by the monotonicity proved in step $1$. This means that we can also consider $\ell \leq x_1$. But this time, according to step $2$, we want $x_1$ to be the lowest possible, so it is enough to consider $x_1=\ell$. Using step $1$ again, we obtain that the lowest value of $\mathfrak{d}(\cdot)$ can be bounded by the limit $\lim_{\ell\to 0} \mathfrak{d}(\ell, x_2, \ell)$ which using L'Hôpital's rule in $\circled{1}$ is
    \begin{align*} 
         \begin{aligned}
            \mathfrak{d}(x_1,x_2, \ell) &\geq \lim_{\ell\to 0} \mathfrak{d}(\ell,x_2, \ell) \\
             &= \lim_{\ell\to 0}    \frac{1}{2\ell}\log\left(1+\frac{2\ell}{\sqrt{1-x_2^2}-\ell}\right) \\
             & \circled{1}[=] \lim_{\ell\to 0}  \frac{\frac{2(\sqrt{1-x_2^2}-\ell)+2\ell}{(\sqrt{1-x_2^2}-\ell)^2}}{2(1+2\ell/(\sqrt{1-x_2^2}-\ell))} \\
            & = \frac{1}{\sqrt{1-x_2^2}}.
           \end{aligned}
        \end{align*}
       The expression is minimized at $x_2=0$ and evaluates to $1$.
\end{proof}

The proof of the corresponding lemma for the sphere is analogous, we add it for completeness.

\begin{lemma} \label{lemma:distances_spherical} 
    Let $x,y\in B_R=\expon{\xInit[\notilde]}(\ball(0, \RR))\subseteq \Mk$ be two different points, where $\Mk$ is the spherical space with constant sectional curvature $\K=1$, and $\RR<\pi/2$. Then, we have
    \[
         \cos^2(\RR) \leq \frac{\dist(x, y)}{\norm{\tilde{x}-\tilde{y}}} \leq 1.
    \]
\end{lemma}

\begin{proof}
    We proceed in a similar way than with the hyperbolic case. We can also work with $d=2$ only, since $\tilde{x}, \tilde{y}$ and $\xInit$ lie on a plane. We parametrize a general pair of points as $\tilde{x} = (x_1, x_2) \in \X$ and $y=(x_1-\ell,x_2) \in \X$, so $x_1^2+x_2^2 \leq \Rtilde^2$, for $\Rtilde = \tan(\RR)$ and by definition $\ell=\norm{\tilde{x}-\tilde{y}}$. 
    
    Let $\mathfrak{d}(x_1,x_2,\ell) \defi \dist(x,y)/\norm{\tilde{x}-\tilde{y}}$. We proceed to prove the result in three steps, similarly to the hyperbolic case.
\begin{enumerate}
    \item If $x_1=\ell$ then $\mathfrak{d}(x_1,x_2,\ell)$ decreases whenever $x_1$ increases. This allows to prove that it is enough to consider points in which $\ell\leq x_1$.
    \item $\frac{\partial \mathfrak{d}(\cdot)}{\partial x_1} \leq 0$, whenever $\ell \leq x_1$. So we can consider $x_1=\sqrt{\Rtilde^2-x_2^2}$ only.
    \item With the constraints above, $\mathfrak{d}(\cdot)$ increases with $\ell$, so in order to lower bound $\mathfrak{d}(\cdot)$ we can consider $\lim_{\ell\to 0}\mathfrak{d}(\sqrt{\Rtilde-x_2},x_2, \ell) = \sqrt{1+x_2^2}/(1+\Rtilde^2)$. This is minimized at $x_2=0$ and evaluates to $1/(1+\Rtilde^2)$.

\end{enumerate}

For the first step, we compute the partial derivative:
\begin{equation}\label{eq:partial_derivative_spherical_deformation}
    \frac{\partial \mathfrak{d}(x_1, x_2, x_1)}{\partial x_1} = \frac{x_1 \sqrt{1 + x_2^2}/(1 + x_1^2 + x_2^2) - \arccos\left(\sqrt{(1 + x_2^2)/(1 + x_1^2 + x_2^2)}\right)}{x_1^2}.
\end{equation}
In order to see that it is non-positive, we compute the partial derivative of the denominator with respect to $x_2$ and obtain
\[
    \frac{2x_1^3 x_2}{\sqrt{1+x_2^2}(1+x_1^2+x_2^2)} \geq 0.
\]
so in order to maximize \eqref{eq:partial_derivative_spherical_deformation} we set $x_2=\sqrt{\Rtilde-x_1^2}$. In that case, the numerator is
\begin{equation}\label{eq:aux_eq_sph_def}
\frac{x_1 \sqrt{1 + \RR^2 - x_1^2}}{1 + \RR^2} - \arccos\left(\sqrt{\frac{1 + \RR^2 - x_1^2}{1 + \RR^2}}\right),
\end{equation}
and its derivative with respect to $x_1$ is
\[
    -\frac{2 x_1^2}{(1 + \RR^2) \sqrt{1 + \RR^2 - x_1^2}} \leq 0.
\]
and given that \eqref{eq:aux_eq_sph_def} with $x_1=0$ evaluates to $0$ we conclude that \eqref{eq:partial_derivative_spherical_deformation} is non-positive. Similarly to \cref{lemma:distances_hyperbolic}, suppose the horizontal segment that joins $\tilde{x}$ and $\tilde{y}$ passes through $\tilde{r} \defi (0,x_2)$. And suppose without loss of generality that $\dist(x,r) \geq \dist(r,y)$, i.e., $x_1 \geq \ell-x_1$. Then by the monotonicity we just proved, we have 
\begin{equation}\label{eq:aux_mathfrakd_functions}
    \frac{\dist(x,r)}{\norm{\tilde{x}-\tilde{r}}}  = \mathfrak{d}(x_1, x_2, x_1) \leq \mathfrak{d}(\ell - x_1, x_2, \ell-x_1) = \frac{\dist(r,y)}{\norm{\tilde{r}-\tilde{y}}}.
\end{equation}
And this implies $\mathfrak{d}(x_1, x_2, x_1) \leq \mathfrak{d}(x_1, x_2, \ell)$. Indeed, that is equivalent to show
\[
    \frac{\dist(x,r)}{\norm{\tilde{x}-\tilde{r}}} \leq \frac{\dist(x,y)}{\norm{\tilde{x}-\tilde{y}}} = \frac{\dist(x,r)+\dist(r+y)}{\norm{\tilde{x}-\tilde{r}}+\norm{\tilde{r}-\tilde{y}}}.
\]
Which is true, since after simplifying we arrive to \eqref{eq:aux_mathfrakd_functions}. So in order to lower bound $\mathfrak{d}(\cdot)$, it is enough to consider $\ell\leq x_1$.

We focus on step $2$ now. We have
\[
    \frac{\partial \mathfrak{d}(x_1,x_2,\ell)}{\partial x_1}  = \frac{\sqrt{1+x_2^2}(\ell-2 x_1)}{(1+x_2^2+(\ell-x_1)^2)(1+x_2^2+x_1^2)}.
\]
which is non-positive given the restrictions we imposed after step $1$. So in order to lower bound $\mathfrak{d}(\cdot)$ we can consider $x_1=\sqrt{\Rtilde-x_2^2}$ only.

Finally, in order to complete step $3$ we compute
\begin{align*}
 \begin{aligned}
     \frac{\partial \mathfrak{d}(\sqrt{\Rtilde-x_2^2},x_2,\ell)}{\partial \ell} &= \frac{\sqrt{1+x_2^2}}{\ell(1+\Rtilde^2) + \ell^3-2\ell^2\sqrt{\Rtilde^2-x_2^2}} \\
      &\quad - \frac{1}{\ell^2}\arccos\left(\frac{1+\Rtilde^2-\ell\sqrt{\Rtilde^2-x_2^2}}{\sqrt{(1+\Rtilde^2)(1+\Rtilde^2+\ell^2-2\ell\sqrt{\Rtilde^2-x_2^2})}}\right)
   \end{aligned}
\end{align*}

And in order to prove that this is non-negative, we will prove that the same expression is non-negative, when multiplied by $\ell^2$. We compute the partial derivative of the aforementioned expression with respect to $\ell$:
\begin{align*}
 \begin{aligned}
     \frac{\partial}{\partial \ell}\left(\frac{\partial \mathfrak{d}(\sqrt{\Rtilde-x_2^2},x_2,\ell)}{\partial \ell} \ell^2\right) = \frac{2\ell\sqrt{1+x_2^2}(\sqrt{\Rtilde^2-x_2^2}-\ell)}{(1+\Rtilde^2+\ell^2-2\ell\sqrt{\Rtilde^2-x_2^2})^2} \geq 0.
   \end{aligned}
\end{align*}

And $\ell^2(\partial \mathfrak{d}(\sqrt{\Rtilde-x_2^2},x_2,\ell)/\partial \ell )$ evaluated at $0$ is $0$ for all choices of parameters $\RR$ and $x_2$ in the domain. So we conclude that $\partial \mathfrak{d}(\sqrt{\Rtilde-x_2^2},x_2,\ell)/\partial \ell \geq 0$.

Thus, we can consider the limit when $\ell\to 0$ in order to lower bound $\mathfrak{d}(\cdot)$. In the defined domain, we have 
\begin{align*}
 \begin{aligned}
     \lim_{\ell\to 0}\mathfrak{d}(\sqrt{\Rtilde-x_2},x_2, \ell) &= \lim_{\ell\to 0}\frac{1}{\ell}\arccos\left(\frac{1+\Rtilde^2-x\sqrt{\Rtilde^2-x_2^2}}{\sqrt{1+\Rtilde^2}\sqrt{1+x_2^2+(\ell-\sqrt{\Rtilde^2-x_2^2})^2}}\right)  \\
     &\circled{1}[=] \lim_{\ell\to 0} \frac{\sqrt{1+x_2^2}}{1+\Rtilde^2+\ell^2-2\ell\sqrt{\Rtilde^2-x_2^2}}\\
     &=\frac{\sqrt{1+x_2^2}}{1+\Rtilde^2}.
   \end{aligned}
\end{align*}
We used L'Hôpital's rule for $\circled{1}$. Now, the right hand side of the previous expression is minimized at $x_2=0$ so we conclude that we have
\[
    \cos^2(\RR) = \frac{1}{1+\tan^2(\RR)} = \frac{1}{1+\Rtilde^2} \leq \mathfrak{d}(x_1, x_2, \ell) = \frac{\dist(p,q)}{\norm{\tilde{p}-\tilde{q}}}.
\]

    The upper bound uses again a similar argument. Assume that $\ell>x_1$ and define $\tilde{r}$ as above. We assume again without loss of generality that $x_1\geq \ell-x_1$. Then
    \[
        \frac{\dist(x, r)+\dist(r, y)}{\ell} \leq \frac{\dist(x, r)}{\ell-x_1} \iff \frac{\dist(r, y)}{x_1} \leq \frac{\dist(x,r)}{\ell-x_1}
    \] 
    and the latter is true by the monotonicity proved in step $1$. Consequently we can just consider the points that satisfy $\ell \leq x_1$. By step $2$, $\mathfrak{d}(\cdot)$ is maximal whenever $x_1$ is the lowest possible, so it is enough to consider $x_1=\ell$. Using step $1$ again, we obtain that the greatest value of $\mathfrak{d}(\cdot)$ can be bounded by the limit $\lim_{\ell\to 0} \mathfrak{d}(\ell, x_2, \ell)$ which using L'Hôpital's rule in $\circled{1}$ and simplifying is
    \begin{align*} 
         \begin{aligned}
            \mathfrak{d}(x_1,x_2, \ell) &\leq \lim_{\ell\to 0} \mathfrak{d}(\ell,x_2, \ell) \\
             &= \lim_{\ell\to 0}    \frac{1}{\ell}\arccos\left(\sqrt{\frac{1+x_2^2}{1+\ell^2+x_2^2}}\right) \\
             & \circled{1}[=] \frac{1}{\sqrt{1+x_2^2}}.
           \end{aligned}
        \end{align*}
       The expression is maximized at $x_2=0$ and evaluates to $1$.
\end{proof}

\subsection{Angle deformation}\label{app:sec_angle_deformation}
\begin{lemma}\label{lemma:angle_deformation} 
    Let $x,y\in B_R=\expon{\xInit[\notilde]}(\ball(0, \RR))\subseteq \Mk$ be two different points and different from $\xInit[\notilde]$, where $\Mk$ is a manifold constant sectional curvature $\K\in \{1, -1\}$, and if $\K=1$, then $\RR<\pi/2$. Let $\tilde{\alpha}$ be the angle $\angle \xInit[\notilde]xy$, formed by the vectors $\xInit[\notilde]-x$ and $y-x$. Let $\alpha$ be the corresponding angle between the vectors $\exponinv{x}(\xInit[\notilde])$ and $\exponinv{x}(y)$.  The following holds:
\begin{align*}
\begin{aligned}
\sin(\alpha) = \sin(\tilde{\alpha}) \sqrt{\frac{1+\K\norm{\tilde{x}}^2}{1+\K\norm{\tilde{x}}^2\sin^2(\tilde{\alpha})}}, \quad \quad \cos(\alpha) = \cos(\tilde{\alpha}) \sqrt{\frac{1}{1+\K\norm{\tilde{x}}^2\sin^2(\tilde{\alpha})}}.
\end{aligned}
\end{align*}
\end{lemma}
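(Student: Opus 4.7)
The plan is to relate the three distances in the geodesic triangle $\triangle x_0 x y$ via the map characterization \eqref{eq:appendix_characterization_of_geodesic_map_and_metric}, and then extract $\cos\alpha$ from the law of cosines on the model space of constant curvature $K$. Since both sides of the claimed identity depend only on $\|\tilde{x}\|$ and $\tilde{\alpha}$, I would first exploit the rotational symmetry of the geodesic map about $\tilde{x}_0 = 0$ to place $\tilde{x} = (r, 0, \ldots, 0)$ with $r = \|\tilde{x}\|$. Since $\tilde{x}_0 - \tilde{x} = -\tilde{x}$ points in direction $-e_1$, the defining angle forces $\tilde{y} - \tilde{x} = s(-\cos\tilde{\alpha}, \sin\tilde{\alpha}, 0, \ldots, 0)$, i.e.\ $\tilde{y} = (r - s\cos\tilde{\alpha},\; s\sin\tilde{\alpha}, 0, \ldots, 0)$ with $s = \|\tilde{x} - \tilde{y}\|$.

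Next, denote $a = d(x_0, y)$, $b = d(x, y)$, $c = d(x_0, x)$, so that $\alpha$ is the angle at $x$ opposite side $a$. Applying \eqref{eq:appendix_characterization_of_geodesic_map_and_metric} to each pair gives closed forms for $C_K(a)$, $C_K(b)$, $C_K(c)$ as rational functions of $r, s, \tilde{\alpha}, K$; in particular $C_K(c) = 1/\sqrt{1+Kr^2}$ and (using $KS_K^2 = 1 - C_K^2$) $S_K(c) = r/\sqrt{1+Kr^2}$. The central algebraic computation is to expand
\[
(1+Kr^2)(1+K\|\tilde{y}\|^2) - (1+K\langle \tilde{x}, \tilde{y}\rangle)^2 = Ks^2\bigl(1 + Kr^2\sin^2\tilde{\alpha}\bigr),
\]
which, plugged into $KS_K^2(b) = 1 - C_K^2(b)$, yields the crucial factorization
$S_K(b) = s\sqrt{1 + Kr^2\sin^2\tilde{\alpha}} / \bigl(\sqrt{1+Kr^2}\,\sqrt{1+K\|\tilde{y}\|^2}\bigr).$

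Now I invoke the unified law of cosines $C_K(a) = C_K(b)C_K(c) + K\,S_K(b)S_K(c)\cos\alpha$ (the hyperbolic law for $K=-1$, the spherical law for $K=1$). A direct calculation gives $C_K(a) - C_K(b)C_K(c) = Krs\cos\tilde{\alpha} / \bigl((1+Kr^2)\sqrt{1+K\|\tilde{y}\|^2}\bigr)$, and dividing by $K\,S_K(b)S_K(c)$ makes the factor $rs/\bigl((1+Kr^2)\sqrt{1+K\|\tilde{y}\|^2}\bigr)$ cancel, leaving
\[
\cos\alpha \;=\; \frac{\cos\tilde{\alpha}}{\sqrt{1 + Kr^2\sin^2\tilde{\alpha}}}.
\]
The expression for $\sin\alpha$ then falls out of $\sin^2\alpha = 1 - \cos^2\alpha$: the numerator simplifies to $\sin^2\tilde{\alpha}(1 + Kr^2)$, giving the claimed formula. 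Since $\tilde{\alpha}, \alpha \in [0,\pi]$ are interior angles, $\sin\tilde{\alpha}, \sin\alpha \geq 0$, so we take positive square roots.

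The main obstacle is not conceptual but bookkeeping: both the law of cosines and the map characterization must be invoked with the correct sign convention in the unified parameter $K$, and one must carefully verify the factorization identity above (where the terms linear in $s$ cancel, leaving the clean product on the right). A minor point worth checking is that for $K = 1$ the standing assumption $R < \pi/(2\sqrt{K})$ guarantees $b < \pi$, so $S_K(b) > 0$ and all square roots have unambiguous positive values.
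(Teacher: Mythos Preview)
Your proof is correct and takes a genuinely different route from the paper's. The paper first computes $\sin\alpha$ by applying the law of \emph{sines} to the triangle $\triangle x_0 x y$, which expresses $\sin\alpha$ through ratios like $\sinh(d(x_0,y))/\sinh(d(x,y))$ that still depend on $\|\tilde{x}-\tilde{y}\|$; it then removes this dependence by taking the limit $d(x,y)\to 0$ (via L'H\^opital), and finally recovers $\cos\alpha$ from $\sin^2+\cos^2=1$ together with a sign argument. Your approach instead uses the law of \emph{cosines} and the exact algebraic factorization $(1+Kr^2)(1+K\|\tilde{y}\|^2)-(1+K\langle\tilde{x},\tilde{y}\rangle)^2 = Ks^2(1+Kr^2\sin^2\tilde{\alpha})$, which makes the dependence on $s=\|\tilde{x}-\tilde{y}\|$ cancel \emph{without} any limiting procedure, yielding $\cos\alpha$ directly. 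This is cleaner and more elementary: no limits, no L'H\^opital, and the unified $K\in\{1,-1\}$ treatment is handled in one stroke rather than in two separate computations. The only small points to tidy up are that for $K=-1$ one should note $r=\|\tilde{x}\|<1$ so that $1+Kr^2\sin^2\tilde{\alpha}>0$ and the square roots are well defined, and that $S_K(b)>0$ there is automatic since $\sinh>0$ on $(0,\infty)$.
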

\begin{proof}
    Note that we can restrict ourselves to $\alpha \in [0, \pi]$ because we have $\widetilde{(-w)} = -\tilde{w}$ (recall our \hyperlink{sec:notation}{notation} about vectors with tilde). This means that the result for the range $\alpha\in[-\pi, 0]$ can be deduced from the result for $-\alpha$.

    We start with the case $\K=-1$. We can assume without loss of generality that the dimension is $\n=2$, and that the coordinates of $\tilde{x}$ are $(0,x_2)$, for $x_2\leq \tanh(\RR)$ that $\tilde{y}=(y_1, y_2)$, for some $y_1\leq 0$ and $\tilde{\delta} \defi \angle \tilde{y}\xInit\tilde{x} \in [0, \pi/2]$, since we can make the distance $\norm{\tilde{x}-\tilde{y}}$ as small as we want. Recall $\xInit=\zeros_n$. We recall that $\dist(x, \xInit[\notilde]) = \arctanh(\norm{\tilde{x}})$ and we note that $\sinh(\arctanh(t)) = \frac{t}{1-t^2}$, so that $\sinh(\dist(x, \xInit[\notilde])) = \norm{\tilde{x}}/\sqrt{1-\norm{\tilde{x}}^2}$, for any $\tilde{x}\in\X$. We will apply the hyperbolic and Euclidean law of sines \cref{thm:law_of_sines} in order to compute the value of $\sin(\alpha)$ with respect to $\tilde{\alpha}$. Let $\tilde{a}$ and $\tilde{b}$ be points in the border of $B(0,1)$ such that the segment joining $\tilde{a}$ and $\tilde{b}$ is a chord that contains $\tilde{x}$ and $\tilde{y}$ and $\norm{\tilde{a}-\tilde{x}}\leq\norm{\tilde{b}-\tilde{x}}$. So $\norm{\tilde{a}-\tilde{x}}$ and $\norm{\tilde{b}-\tilde{x}}$ are $\sqrt{1-\norm{\tilde{x}}^2\sin^2(\tilde{\alpha})}-\norm{\tilde{x}}\cos(\tilde{\alpha})$ and $\sqrt{1-\norm{\tilde{x}}^2\sin^2(\tilde{\alpha})}+\norm{\tilde{x}}\cos(\tilde{\alpha})$, respectively. We have
\begingroup
\allowdisplaybreaks
    \begin{align*}
     \sin(\alpha) &\circled{1}[=] \frac{\sinh(\dist(\xInit[\notilde], y))\sin(\tilde{\delta})}{\sinh(\dist(x, y))} \circled{2}[=] \frac{\norm{\xInit-\tilde{y}}}{\sqrt{1-\norm{\xInit-\tilde{y}}^2}} \cdot\frac{\norm{\tilde{x}-\tilde{y}} \sin(\tilde{\alpha})}{\norm{\xInit-\tilde{y}}}\cdot \frac{1}{ \sinh(\dist(x, y))}\\
     &\circled{3}[=] \frac{\sin(\tilde{\alpha})}{\sqrt{1-\norm{\tilde{x}}^2+\norm{\tilde{x}-\tilde{y}}(-2\norm{\tilde{x}}\cos(\tilde{\alpha})+\norm{\tilde{x}-\tilde{y}})}} \cdot\frac{\norm{\tilde{x}-\tilde{y}}}{\sinh(\dist(x, y))} \\
     &\circled{4}[=] \frac{\sin(\tilde{\alpha})}{\sqrt{1-\norm{\tilde{x}}^2}} \lim_{\dist(x, y)\to 0} \norm{\tilde{x}-\tilde{y}}\frac{1}{\sinh(\dist(x,y))} \\
        &\circled{5}[=] \frac{\sin(\tilde{\alpha})}{\sqrt{1-\norm{\tilde{x}}^2}} \lim_{\dist(x, y)\to 0} \frac{(e^{2\dist(x, y)}-1)(\norm{\tilde{a}-\tilde{x}}\cdot \norm{\tilde{b}-\tilde{x}})}{e^{2\dist(x, y)}(\norm{\tilde{a}-\tilde{x}} + \norm{\tilde{b}-\tilde{x}})}\cdot \frac{2e^{\dist(x, y)}}{e^{2\dist(x, y)}-1}\\
    &=\frac{\sin(\tilde{\alpha})}{\sqrt{1-\norm{\tilde{x}}^2}}\cdot\frac{2 \norm{\tilde{a}-\tilde{x}} \cdot \norm{\tilde{b}-\tilde{x}}}{\norm{\tilde{a}-\tilde{x}} + \norm{\tilde{b}-\tilde{x}}} \\
     &\circled{6}[=]\frac{\sin(\tilde{\alpha})}{\sqrt{1-\norm{\tilde{x}}^2}}\cdot \frac{2(1-\norm{\tilde{x}}^2\sin^2(\tilde{\alpha})-\norm{\tilde{x}}^2\cos^2(\tilde{\alpha}))}{2\sqrt{1-\norm{\tilde{x}}^2\sin^2(\tilde{\alpha})}} = \sin(\tilde{\alpha})\sqrt{\frac{1-\norm{\tilde{x}}^2}{1-\norm{\tilde{x}}^2\sin^2(\tilde{\alpha})}}.
\end{align*}
\endgroup
    In $\circled{1}$ we used the hyperbolic sine theorem. In $\circled{2}$ we used the expression above regarding segments that pass through the origin, and the Euclidean sine theorem. In $\circled{3}$, we simplify and use that the coordinates of $\tilde{y}$ are $(-\sin(\tilde{\alpha})\norm{\tilde{x}-\tilde{y}}, \norm{\tilde{x}}-\cos(\tilde{\alpha})\norm{\tilde{x}-\tilde{y}})$. Then, in $\circled{4}$, since $\sin(\alpha)$ does not depend on $\norm{\tilde{x}-\tilde{y}}$, we can take the limit when $\dist(x, y) \to 0$, by which we mean we take the limit $\tilde{y}\to\tilde{x}$ by keeping the angle $\tilde{\alpha}$ constant. Since a posteriori the limit of each fraction exists, we compute them one at a time. $\circled{5}$ uses \eqref{eq:distances_hyper_alt} and the definition of $\sinh(\dist(x, y))$ for the last factor. The equality for the other factor can be checked with \eqref{eq:distances_hyper_alt}. In $\circled{6}$ we substitute $\norm{\tilde{a}-\tilde{x}}$ and $\norm{\tilde{b}-\tilde{x}}$ by their values.

    The spherical case is similar to the hyperbolic case. We also assume without loss of generality that the dimension is $\n=2$. Define $\tilde{y}$ as a point such that $\angle \xInit\tilde{x}\tilde{y} = \tilde{\alpha} $. We can assume without loss of generality that the coordinates of $\tilde{x}$ are $(0,x_2)$, that $\tilde{y}=(y_1, y_2)$, for $y_1\leq 0$, and $\tilde{\delta} \defi \angle \tilde{y}\xInit\tilde{x} \in [0, \pi/2]$, since we can make the distance $\norm{\tilde{x}-\tilde{y}}$ as small as we want. We recall that by \eqref{eq:R_tilde_vs_R} we have $\dist(\xInit[\notilde],x) = \arctan(\norm{\xInit-\tilde{x}})$ and we note that $\sin(\arctan(t)) = \frac{t}{1+t^2}$, so that $\sin(\dist(\xInit[\notilde], x)) = \norm{\xInit-\tilde{x}}/\sqrt{1+\norm{\xInit-\tilde{x}}^2}$, for any $\tilde{x}\in\X$. We will apply the spherical and Euclidean law of sines \cref{thm:law_of_sines} in order to compute the value of $\sin(\alpha)$ with respect to $\tilde{\alpha}$. We have

\begingroup
\allowdisplaybreaks
\begin{align*}
     \sin(\alpha) &\circled{1}[=] \frac{\sin(\dist(\xInit[\notilde], y))\sin(\tilde{\delta})}{\sin(\dist(x, y))} \circled{2}[=] \frac{\norm{\xInit-\tilde{y}}}{\sqrt{1+\norm{\xInit-\tilde{y}}^2}} \cdot\frac{\norm{\tilde{x}-\tilde{y}} \sin(\tilde{\alpha})}{\norm{\xInit-\tilde{y}}} \frac{1}{ \sin(\dist(x, y))}\\
     &\circled{3}[=] \frac{\sin(\tilde{\alpha})\norm{\tilde{x}-\tilde{y}}}{\sqrt{1+\norm{\xInit-\tilde{y}}^2}\sqrt{1-\frac{(1-\norm{x}\cos(\tilde{\alpha})\norm{\tilde{x}-\tilde{y}}+\norm{\tilde{x}}^2)^2}{(1+\norm{\tilde{x}}^2)(1+\norm{\xInit-\tilde{y}}^2)}}}\\
     &\circled{4}[=] \frac{\sin(\tilde{\alpha})\norm{\tilde{x}-\tilde{y}}}{\sqrt{\norm{\tilde{x}-\tilde{y}}^2(1+\norm{\tilde{x}}^2-\norm{\tilde{x}}^2\cos^2(\tilde{\alpha}))/(1+\norm{\tilde{x}}^2)}} \circled{5}[=] \sin(\tilde{\alpha})\sqrt{\frac{1+\norm{\tilde{x}}^2}{1+\norm{\tilde{x}}^2\sin^2(\tilde{\alpha})}}.
\end{align*}
\endgroup

    In $\circled{1}$ we used the spherical sine theorem. In $\circled{2}$ we used the expression above regarding segments that pass through the origin, and the Euclidean sine theorem. In $\circled{3}$, we use the fact that the coordinates of $\tilde{y}$ are $(-\sin(\tilde{\alpha})\norm{\tilde{x}-\tilde{y}}, \norm{\tilde{x}}^2-\cos(\tilde{\alpha})\norm{\tilde{x}-\tilde{y}})$, use the distance formula \eqref{eq:distances} and the trigonometric equality $\sin(\arccos(x)) = \sqrt{1-x^2}$. Then, in $\circled{4}$ and $\circled{5}$, we multiply and simplify. 

    Finally, in both cases, the cosine formula is derived from the identity $\sin^2(\alpha)+\cos^2(\alpha)=1$ after noticing that the sign of $\cos(\alpha)$ and the sign of $\cos(\tilde{\alpha})$ are the same. The latter fact can be seen to hold true by noticing that $\alpha$ is monotonous with respect to $\tilde{\alpha}$ and  the fact that $\tilde{\alpha}=\pi/2$ implies $\sin(\alpha)=0$.
\end{proof}

\begin{fact}[Constant Curvature non-Euclidean Law of Sines and Law of Cosines]\label{thm:law_of_sines}
    Let $\K\neq 0$ and let $\newtarget{def:special_sine}{\sk[\cdot]}$ and $\ck[\cdot]$ denote the special sine and cosine, respectively, defined as $\sk[t] = \sin(\sqrt{\K}t)$ and $\ck[t] = \cos(\sqrt{\K}t)$ if $\K>0$, and as $\sk[t] = \sinh(\sqrt{-\K}t)$ and $\ck[t] = \cosh(\sqrt{-\K}t)$ if $\K<0$. Let $a, b,c$ be the lengths of the sides of a geodesic triangle defined on a manifold of constant sectional curvature $\K$. Let $\alpha, \beta, \gamma$ be the angles of the geodesic triangle, that are opposite to the sides $a, b, c$. The following holds:
    \begin{itemize}
        \item Law of sines:
            \[
                    \frac{\sin(\alpha)}{\sk[a]} =  \frac{\sin(\beta)}{\sk[b]} = \frac{\sin(\gamma)}{\sk[c]}.
            \] 
        \item Law of cosines:
            \[
                \ck[a] = \ck[b]\ck[c] + \cos(\alpha)\sk[b]\sk[c].
            \] 
    \end{itemize}
\end{fact}
    We refer to \citep{greenberg1993euclidean} for a proof of these classical theorems.

\subsection[Gradient deformation and gradient Lipschitzness of f]{Gradient deformation and gradient Lipschitzness of ${\protect\f}$} \label{sec:app_gradient_deformation_and_smoothness}

\cref{lemma:angle_deformation}, with $\tilde{\alpha} = \pi/2$, shows that $e_1 \perp e_j$, for $j\neq 1$. The rotational symmetry implies $e_i\perp e_j$ for $i\neq j$ and $i,j>1$. As in \cref{lemma:deformations}, let $x\in\BR$ be a point and assume without loss of generality that $\tilde{x}\in\operatorname{span}\{\hyperlink{def:e_i_canonical_basis}{\color{black}\tilde{e}_1}\}$ and $\nabla \f(\tilde{x})\in \operatorname{span}\{\hyperlink{def:e_i_canonical_basis}{\color{black}\tilde{e}_1}, \hyperlink{def:e_i_canonical_basis}{\color{black}\tilde{e}_2}\}$. It can be assumed without loss of generality because of the symmetries. So we can assume the dimension is $\n=2$. Using \cref{lemma:deformations} we obtain that $\tilde{\alpha}=0$ implies $\alpha=0$. Also $\tilde{\alpha} = \pi/2$ implies $\alpha=\pi/2$, so the adjoint of the differential of $\h^{-1}$ at $x$, $(\mathrm{d} \h^{-1})^\ast_x$ diagonalizes and has $e_1$ and $e_2$ as eigenvectors. We only need to compute the eigenvalues. The computation of the first one uses that the geodesic passing from $\xInit[\notilde]$ and $x$ can be parametrized as $\h^{-1}(\xInit+\arctan(\tilde{\lambda}\hyperlink{def:e_i_canonical_basis}{\color{black}\tilde{e}_1}))$ if $\K=1$ and $\h^{-1}(\xInit+\arctanh(\tilde{\lambda}\hyperlink{def:e_i_canonical_basis}{\color{black}\tilde{e}_1}))$ if $\K=-1$, by \eqref{eq:appendix_characterization_of_geodesic_map_and_metric}. The derivative of $\arctan(\cdot)$ or $\arctanh(\cdot)$ reveals that the first eigenvector, the one corresponding to $e_1$, is $1/(1+\K\norm{\tilde{x}^2})$, i.e., $\nabla \f(\tilde{x})_1 = \nabla \F(x)_1 / (1+\K\norm{\tilde{x}^2})$. For the second one, let $x=(x_1, 0)$ and $y=(y_1, y_2)$, with $y_1=x_1$ the second eigenvector results from the computation, for $\K=-1$: 
\begin{align*} 
 \begin{aligned}
     \lim_{y_2 \to 0} \frac{\dist(x,y)}{y_2}  &=\lim_{y_2 \to 0} \frac{1}{2y_2} \log\left(1+\frac{2y_2}{\sqrt{1-x_1^2}-y_2}\right) \\ 
    &\circled{1}[=] \lim_{y_2 \to 0}  \frac{\frac{2}{\sqrt{1-x_1^2}-y_2} + \frac{2y_2}{(\sqrt{1-x_1^2}-y_2)^2}}{2+\frac{4y_2}{\sqrt{1-x_1^2}-y_2}} \\
    &= \frac{1}{\sqrt{1-x_1^2}},
   \end{aligned}
\end{align*}
and for $\K=1$: 
\begin{align*} 
 \begin{aligned}
     \lim_{y_2 \to 0} \frac{\dist(x,y)}{y_2}  &=\lim_{y_2 \to 0} \frac{1}{y_2} \arccos\left(\frac{\sqrt{1+x_1^2}}{\sqrt{1 + x_1^2 + y_2^2}}\right) \\ 
    &\circled{2}[=] \lim_{y_2 \to 0} \frac{\sqrt{1 + x_1^2}}{1 + x_1^2 + y_2^2}\\
    &= \frac{1}{\sqrt{1+x_1^2}}.
   \end{aligned}
\end{align*}
So, since $x_1=\norm{\tilde{x}}$, we have $\nabla \f(\tilde{x})_2 = \nabla \F(x)_2/\sqrt{1+\K\norm{\tilde{x}}^2}$ for $\K\in\{1, -1\}$. We used L'Hôpital's rule in  $\circled{1}$ and $\circled{2}$. 

Also note that if $v \in \Tansp{x}\Mk$ is a vector normal to $\nabla \F(x)$, then $\tilde{v}$ is normal to $\nabla \f(x)$. It is easy to see this geometrically: Indeed, no matter how $\h$ changes the geometry, since it is a geodesic map, a geodesic in the direction of first-order constant increase of $\F$ is mapped via $\h$ to a geodesic in the direction of first-order constant increase of $\f$. And the respective gradients must be perpendicular to all these directions. Alternatively, this can be seen algebraically. Suppose first $\n=2$, then $v$ is proportional to $(\nabla \F(x)_2, -\nabla \F(x)_1) = (\sqrt{1+\K\norm{\tilde{x}}^2} \nabla \f(\tilde{x})_2, -(1+\K\norm{\tilde{x}}^2)\nabla \f(\xI[1]))$. And a vector $\tilde{v}'$ normal to $\nabla \f(x)$ must be proportional to $(-\nabla \f(x)_2, \nabla \f(x)_1)$. Let $\alpha$ be the angle formed by $v$ and $-e_1$, $\tilde{\alpha}$ the corresponding angle formed between $\tilde{v}$ and $-\hyperlink{def:e_i_canonical_basis}{\color{black}\tilde{e}_1}$, and $\tilde{\alpha}'$ the angle formed by $\tilde{v}'$ and $-\hyperlink{def:e_i_canonical_basis}{\color{black}\tilde{e}_1}$. Then we have, using the expression for the vectors proportional to $v$ and $\tilde{v}'$:
\[
    \sin(\alpha) = \frac{-\f(x)_2}{\sqrt{\nabla \f(x)_2^2+(1+\norm{x}^2)\nabla \f(x)_1^2}} \text{ and } \sin(\tilde{\alpha}') = \frac{-\f(x)_2}{\sqrt{\nabla \f(x)_2^2+\nabla \f(x)_1^2}} 
\] 
and using one equation on the other yields $\sin(\alpha) = \sin(\tilde{\alpha}')\sqrt{(1+\K\norm{\tilde{x}^2})/(1+\K\norm{\tilde{x}^2}\sin^2(\tilde{\alpha}'))}$, which after applying \cref{lemma:angle_deformation} we obtain $\sin(\tilde{\alpha}') = \sin(\tilde{\alpha})$ from which we conclude that $\tilde{\alpha}' = \tilde{\alpha}$ given that the angles are in the same quadrant. So $\tilde{v} \perp \nabla \f(x)$. In order to prove this for $\n\geq 3$ one can apply the reduction \eqref{claim:same_direction_when_projecting} to the case $\n=2$ that we obtain in the next section.  

Combining the results obtained so far in \cref{app:geometric_results}, we can prove \cref{lemma:deformations}. We continue by proving \cref{lemma:smoothness_of_transformed_function}, which will generalize the computations we just performed, in order to analyze the Hessian of $\f$ and provide smoothness. Then, in the next section, we combine the results in \cref{lemma:deformations} to prove \cref{prop:bounding_hyperplane}.

\begin{proof}\textbf{of \cref{lemma:deformations}.}\linkofproof{lemma:deformations}
    The lemma follows from Lemmas \ref{lemma:distances_hyperbolic}, \ref{lemma:distances_spherical}, \ref{lemma:angle_deformation} and the previous reasoning in this Section \ref{sec:app_gradient_deformation_and_smoothness}.
\end{proof}

\begin{proof}\textbf{of \cref{lemma:smoothness_of_transformed_function}.}\linkofproof{lemma:smoothness_of_transformed_function}
    Recall $\F:\Mk\to\R$ is a function defined on a manifold of constant sectional curvature with a point $\xast[\notilde] \in\BR$ such that $\nabla \F(\xast[\notilde]) = 0$ and we call $\RR$ an upper bound on $\dist(\xInit[\notilde], \xast[\notilde])$, for an initial point $\xInit[\notilde]$. We first note that for $\F$, which is assumed to be twice differentiable, g-convex and $L$-smooth in $\Mk$, we have that $\norm{\nabla^2 \F(x)} \leq L$ for any $x\in\Mk$. Indeed, for twice differentiable Euclidean convex and $L$-smooth functions, this fact is know. By restricting $\F(x)$ to a geodesic, we obtain one such Euclidean function in dimension $1$ to which we can apply the aforementioned fact. After applying it to all geodesics going through $x$, we obtain the Riemannian fact.

    We will compute the Hessian of $\f=\F\circ \h^{-1}$ and we will bound its spectral norm for any point $\tilde{x}\in\X$. We can assume without loss of generality that $\n=2$ and $\tilde{x} = (\tilde{\ell}, 0)$, for $\tilde{\ell} > 0$ (the case $\tilde{\ell}=0$ is trivial), since there is a rotational symmetry with $e_1$ as axis. This means that by rotating we could align the top eigenvector of the Hessian at a point so that it is in $\operatorname{span}\{e_1, e_2\}$. Let $\tilde{y} = (y_1, y_2) \in \X$ be another point, with $y_1=\tilde{\ell}$. We can also assume that $y_2>0$ without loss of generality, because of our symmetry. Our approach will be the following. We know by \cref{lemma:deformations}.b and by the beginning of this Section \ref{sec:app_gradient_deformation_and_smoothness} that the adjoint of the differential of $\h^{-1}$ at $y$, $(\mathrm{d} \h^{-1})^\ast_y$ has $\exponinv{y}(\xInit[\notilde])$ and a normal vector to it as eigenvectors. Their corresponding eigenvalues are $1/(1+\K\norm{\tilde{y}}^2)$ and $1/\sqrt{1+\K\norm{\tilde{y}}^2}$, respectively. Consider the basis $\{e_1, e_2\}$ of $\Tansp{x}\Mk$ as defined at the beginning of this section, i.e., where $e_1$ is a unit vector proportional to $-\exponinv{x}(\xInit[\notilde])$ and $e_2$ is the normal vector to $e_1$ that makes the basis orthonormal. Consider this basis being transported to $y$ using parallel transport and denote the result $\{v_y, v_y^\perp\}$. Assume we have the gradient $\nabla \F(y)$ written in this basis. Then we can compute the gradient of $\f$ at $y$ by applying $(\mathrm{d}\h^{-1})^\ast_y$ to $\nabla \F(y)$. In order to do that, we compose the change of basis from $\{v_y, v_y^\perp\}$ to the basis of eigenvectors of $(\mathrm{d} \h^{-1})^\ast_y$, then we apply a diagonal transformation given by the eigenvalues and finally we change the basis to $\{\hyperlink{def:e_i_canonical_basis}{\color{black}\tilde{e}_1}, \hyperlink{def:e_i_canonical_basis}{\color{black}\tilde{e}_2}\}$. Once this is done, we can differentiate with respect to $y_2$ in order to compute a column of the Hessian.  Let $\tilde{\alpha}$ be the angle formed by the vectors $\tilde{y}$ and $\tilde{x}$. Note that $\tilde{\alpha} = \arctan(y_2/y_1)$. Let $\tilde{\gamma}$ be the angle formed by the vectors $(\tilde{y}-\tilde{x})$ and $-\tilde{y}$. That is, the angle $\tilde{\gamma}=\pi-\angle \tilde{x} \tilde{y} \xInit$. Since $v_y^\perp$ is the parallel transport of $e_2^\perp$, the angle between $v_y^\perp$ and the vector $\exponinv{y}(\xInit[\notilde])$ is $\gamma$. Note we use the same convention as before for the angles, i.e., $\gamma$ is the corresponding angle to $\tilde{\gamma}$, meaning that if $\gamma$ is the angle between two intersecting geodesics in $\BR$, then $\tilde{\gamma}$ is the angle between the respective corresponding geodesics in $\X$. Note the first change of basis is a rotation and that the angle of rotation is $\gamma-\pi/2$. The last change of basis is a rotation with angle equal to the angle formed by a vector $\tilde{v}$  normal to $-\tilde{y}$ ( $\tilde{v}$ is the one such that $-\tilde{y}\times \tilde{v} >0$) and the vector $\hyperlink{def:e_i_canonical_basis}{\color{black}\tilde{e}_2}$. This vector is equal to $\tilde{\alpha}$. So we have
\begin{equation}\label{eq:equality_gradients}
    \nabla \f(\tilde{y}) = 
\begin{pmatrix}
\cos(\tilde{\alpha}) & -\sin(\tilde{\alpha}) \\ \sin(\tilde{\alpha}) & \cos(\tilde{\alpha})
\end{pmatrix}
\begin{pmatrix}
    \frac{1}{1+\K(y_1^2+y_2^2)} & 0   \\ 0 & \frac{1}{\sqrt{1+\K(y_1^2+y_2^2)}}
\end{pmatrix}
\begin{pmatrix}
    \sin(\gamma) & -\cos(\gamma) \\ \cos(\gamma) & \sin(\gamma)
\end{pmatrix}
\nabla \F(y)
\end{equation}

    We want to take the derivative of this expression with respect to $y_2$ and we want to evaluate it at $y_2=0$. Let the matrices above be $A$, $B$ and $C$ so that $\nabla \f(\tilde{y}) = ABC\nabla \F(y)$. Using \cref{lemma:deformations}.b we have
\begin{align} \label{eq:aux_comp_angle_gamma}
   \begin{aligned}
       \sin(\gamma) &= \sin(\tilde{\gamma})\sqrt{\frac{1+\K(y_1^2+y_2^2)}{1+\K(y_1^2+y_2^2)\sin^2(\tilde{\gamma})}} \circled{1}[=] \cos(\tilde{\alpha})\sqrt{\frac{1+\K(y_1^2+y_2^2)}{1+\K(y_1^2+y_2^2)\cos^2(\tilde{\alpha})}}, \\
       \cos(\gamma) &= -\sin(\tilde{\alpha})\sqrt{\frac{1}{1+\K(y_1^2+y_2^2)\cos^2(\tilde{\alpha})}},
   \end{aligned}
\end{align}
where $\circled{1}$ follows from $\sin(\tilde{\gamma}) = \sin(\tilde{\alpha}+\pi/2) = \cos(\tilde{\alpha})$. Now we compute some quantities
\[
    \left.A\right|_{y_2=0} = I,   \left.B\right|_{y_2=0} = 
\begin{pmatrix}
    \frac{1}{1+\K y_1^2} & 0 \\ 0 & \frac{1}{\sqrt{1+\K y_1^2}}
\end{pmatrix},
\left.C\right|_{y_2=0} = I,
\]        
\[ 
        \left.\frac{\partial A}{\partial y_2}\right|_{y_2=0} = \left.\frac{\partial \tilde{\alpha}}{\partial y_2}\right|_{y_2=0} \cdot
\begin{pmatrix}
    0 & -1 \\ 1 & 0
\end{pmatrix}
\circled{1}[=]
\begin{pmatrix}
    0 & \frac{-1}{y_1} \\ \frac{1}{y_1} & 0
\end{pmatrix}
,
\]
\[
        \left.\frac{\partial B}{\partial y_2}\right|_{y_2=0} =
\left.
\begin{pmatrix}
    \frac{2\K y_2}{(1+\K(y_1^2+y_2^2))^2} & 0 \\ 0 & \frac{2\K y_2}{2(1+\K(y_1^2+y_2^2))^{3/2}} 
\end{pmatrix}
\right|_{y_2=0} =
\begin{pmatrix}
    0 & 0 \\ 0 & 0
\end{pmatrix}
,
\]
\[
    \left.\frac{\partial C}{\partial y_2}\right|_{y_2=0} \circled{2}[=]
\begin{pmatrix}
    0 & \frac{1}{y_1\sqrt{1+\K y_1^2}} \\ \frac{-1}{y_1\sqrt{1+\K y_1^2}} & 0
\end{pmatrix}
.
\]
Equalities $\circled{1}$ and $\circled{2}$ follow after using \eqref{eq:aux_comp_angle_gamma}, $\tilde{\alpha} = \arctan(\frac{y_2}{y_1})$ and taking derivatives. Now we differentiate \eqref{eq:equality_gradients} with respect to $y_2$ and evaluate to $y_2=0$ using the chain rule. The result is 
\begin{align*}
   \begin{aligned}
\begin{pmatrix}
     \nabla^2 \f(\tilde{x})_{12} \\ \nabla^2 \f(\tilde{x})_{22} 
\end{pmatrix}
        &= \left.\left(\frac{\partial A}{\partial y_2} BC\nabla \F(x) + A\frac{\partial B}{\partial y_2} C\nabla \F(x) +AB\frac{\partial C}{\partial y_2} \nabla \F(x) + ABC\frac{\partial \nabla \F(x)}{\partial y_2}  \right)\right|_{y_2=0} \\
        &=
\begin{pmatrix}
    \frac{-\nabla \F(x)_{2}}{y_1\sqrt{1+\K y_1^2}}  \\ \frac{\nabla \F(x)_{1}}{y_1(1+\K y_1^2)}
\end{pmatrix}
+
\begin{pmatrix}
    0 \\ 0
\end{pmatrix}
+
\begin{pmatrix}
 \frac{\nabla \F(x)_{2}}{y_1(1+\K y_1^2)^{3/2}}  \\ \frac{-\nabla \F(x)_{1}}{y_1(1+\K y_1^2)} 
\end{pmatrix}
+
\begin{pmatrix}
   \frac{\nabla^2 \F(x)_{12}}{(1+\K y_1^2)^{3/2}}  \\ \frac{\nabla^2 \F(x)_{22}}{1+\K y_1^2} 
\end{pmatrix}
   \end{aligned}
\end{align*}
Computing the other column of the Hessian is easier. We can just consider \eqref{eq:equality_gradients} with $\tilde{\alpha} = 0$, $\gamma=\pi/2$ and vary $y_1$. Taking derivatives with respect to $y_1$ gives
\[
\begin{pmatrix}
    \nabla^2 \f(\tilde{x})_{11}   \\ \nabla^2 \f(\tilde{x})_{21}
\end{pmatrix}
=
\begin{pmatrix}
   \frac{-2\K y_1\nabla \F(x)_{1}}{(1+\K y_1^2)^{2}}  \\ \frac{-\K y_1\nabla \F(x)_{2}}{(1+\K y_1^2)^{3/2}} 
\end{pmatrix}
+
\begin{pmatrix}
    \frac{\nabla^2 \F(x)_{11}}{(1+\K y_1^2)^{2}} \\ \frac{\nabla^2 \F(x)_{21}}{(1+\K y_1^2)^{3/2}} 
\end{pmatrix}
.
\]
Note in the computations of both of the columns of the Hessian we have used 
\[
    \frac{\partial{\nabla \F(y)_i}}{\partial y_1} = \nabla \F(x)_{i1} \cdot \frac{1}{1+\K y_1^2} \quad \text{ and } \quad\left.\frac{\partial{\nabla \F(y)_i}}{\partial y_2}\right|_{y_2=0} = \nabla \F(x)_{i2} \cdot \frac{1}{\sqrt{1+\K y_1^2}},
\]
for $i=1,2$. The eigenvalues of the adjoint of the differential of $\h^{-1}$ appear as a factor because we are differentiating with respect to the geodesic in $\X$ which moves at a different speed than the corresponding geodesic in $\BR$. Note as well, as a sanity check, that the cross derivatives are equal, since 
    \[
    -\frac{1}{y_1\sqrt{1+\K y_1^2}} + \frac{1}{y_1(1+\K y_1^2)^{3/2}} = \frac{1}{y_1\sqrt{1+\K y_1^2}}\left(-1 + \frac{1}{1+\K y_1^2}\right) = \frac{-\K y_1}{(1+\K y_1^2)^{3/2}}.
    \] 
    Finally, we bound the new smoothness constant $\Ltilde$ by bounding the spectral norm of this Hessian. First note that using $y_1=\tilde{\ell}$ we have that $\frac{1}{\sqrt{1+\K\tilde{\ell}^2}} = \ck[\ell]$ and then $y_1\sqrt{\abs{K}}=\tilde{\ell}\sqrt{\abs{K}} = \sk[\ell]/\ck[\ell]$ so for $\K=-1$ it is $\tilde{\ell} = \tanh(\ell)$ and for $\K=1$ it is $\tilde{\ell} = \tan(\ell)$, where $\ell = \dist(x, \xInit[\notilde]) < \RR$. We have that since there is a point $\xast[\notilde] \in \BR$ such that $\nabla \F(\xast[\notilde]) = 0$ and $\F$ is $\L$-smooth, then it is $\norm{\nabla \F(x)} \leq 2L\bar{R}$, where $\bar{R} \defi \dist(\xInit[\notilde],\xast[\notilde])$. Similarly, by $\L$-smoothness $\abs{\nabla^2 \F(x)_{ij}} \leq \L$. We are now ready to prove $O(L)$-Lipschitzness of $\nabla\f$. 
\begin{align*} 
 \begin{aligned}
     \Ltilde^2& \defi \max_{\tilde{x}\in\X}\norm{\nabla^2 \f(\tilde{x})}_2^2 \\
     & \leq \max_{\tilde{x}\in\X}\norm{\nabla^2 \f(\tilde{x})}_F^2 = \max_{\tilde{x}\in\X}\{(\nabla^2 \f(\tilde{x})_{11})^2+2(\nabla^2 \f(\tilde{x})_{12})^2+(\nabla^2 \f(\tilde{x})_{22})^2\} \\
     &\leq \L^2([2\sqrt{\abs{\K}} \bar{R} \sk[\RR]\ck[\RR][3]+\ck[\RR][4]]^2+2[\sqrt{\abs{\K}}\bar{R}\sk[\RR]\ck[\RR][2]+\ck[\RR][3]]^2+\ck[\RR][4])
   \end{aligned}
\end{align*}
and this can be bounded by $ 44 \L^2 \max\{1,\abs{\K}\bar{R}^2\}$ if $\K=1$ and $44 \L^2 \max\{1,\abs{\K}\bar{R}^2\} \ck[\RR][8]$ if $\K=-1$. It is $O(\L^2(\abs{\K}\bar{R}^2+1))$ for general $\K$, where recall, we are treating $\RR\sqrt{\abs{\K}}=\bigo{1}$. The result follows, since boundedness of operator norm of $\nabla^2 f$ by $\Ltilde$ implies $\nabla f$ has $\Ltilde$-Lipschitz gradients, and by definition $\Rglobal \geq \bar{R}$.
\end{proof}

\subsection[Proof of Lemma \ref{prop:bounding_hyperplane}]{Proof of \cref{prop:bounding_hyperplane}} \label{app:sec_proof_of_bounding_hyperplane}

\begin{proof}\linkofproof{prop:bounding_hyperplane}
    Assume for the moment that the dimension is $\n=2$. We can assume without loss of generality that $\tilde{x} = (\tilde{\ell}, 0)$. We are given two vectors, that are the gradients $\nabla \F(x)$, $\nabla \f(\tilde{x})$ and a vector $w\in \Tansp{x}\Mk$. Let $\newtarget{def:auxiliary_angle_delta}{\deltatildeAux}$ be the angle between $\tilde{w}$ and $-\tilde{x}$. Let $\deltaAux$ be the corresponding angle, i.e., the angle between $w$ and $u \defi \exponinv{x}(\xInit[\notilde])$. Let $\alpha$ be the angle in between $\nabla \F(x)$ and $u$. Let $\tilde{\beta}$ be the angle in between $\nabla \f(\tilde{x})$ and $-x$. $\tilde{\alpha}$ and $\beta$ are defined similarly. We claim
\begin{equation} \label{inner_product_comparison}
    \frac{\innp{\frac{\nabla \F(x)}{\norm{\nabla \F(x)}}, \frac{w}{\norm{w}}}}{\innp{\frac{\nabla \f(\tilde{x})}{\norm{\nabla \f(\tilde{x})}}, \frac{\tilde{w}}{\norm{\tilde{w}}}}} = \sqrt{\frac{1+\K\tilde{\ell}^2}{(1+\K\tilde{\ell}^2 \sin^2(\deltatildeAux))(1+\K\tilde{\ell}^2 \cos^2(\tilde{\beta}))}}.
\end{equation}

    Let's see how to arrive to this expression. By \cref{lemma:deformations}.c we have
    \begin{equation}\label{eq:relationship_tangents_gradients}
        \tan(\alpha) = \frac{\tan(\tilde{\beta})}{\sqrt{1+\K\tilde{\ell}^2}}.
    \end{equation}
    From this relationship we deduce
    \begin{equation} \label{eq:relationship_cosines_gradients}
        \cos(\alpha)  = \cos(\tilde{\beta}) \sqrt{\frac{1+\K\tilde{\ell}^2}{1+\K\tilde{\ell}^2\cos^2(\tilde{\beta})}},
    \end{equation}
    that comes from squaring \eqref{eq:relationship_tangents_gradients}, reorganizing terms and noting that $\sign(\cos(\alpha)) =\sign(\cos(\tilde{\beta}))$ which is implied by \cref{lemma:deformations}.c. We are now ready to prove the claim \eqref{inner_product_comparison} (for $\n=2$). We have  
\begin{align*} 
   \begin{aligned}
       \frac{\innp{\frac{\nabla \F(x)}{\norm{\nabla \F(x)}}, \frac{w}{\norm{w}}}}{\innp{\frac{\nabla \f(\tilde{x})}{\norm{\nabla \f(\tilde{x})}}, \frac{\tilde{w}}{\norm{\tilde{w}}}}} &= \frac{\cos(\alpha-\deltaAux)}{\cos(\tilde{\beta}-\deltatildeAux)} \\
       &\circled{2}[=] \frac{\cos(\deltaAux)+\tan(\alpha)\sin(\deltaAux)}{\cos(\tilde{\beta})\cos(\deltatildeAux)+\sin(\tilde{\beta})\sin(\deltatildeAux)}\cos(\alpha) \\
       &\circled{3}[=] \frac{\frac{\cos(\deltatildeAux)}{\sqrt{1+\K\tilde{\ell}^2\sin^2(\deltatildeAux)}}+\frac{\tan(\tilde{\beta})}{\sqrt{1+\K\tilde{\ell}^2}}\frac{\sin(\deltatildeAux)\sqrt{1+\K\tilde{\ell}^2}}{\sqrt{1+\K\tilde{\ell}^2\sin^2(\deltatildeAux)}}}{\cos(\tilde{\beta})\cos(\deltatildeAux)+\sin(\tilde{\beta})\sin(\deltatildeAux)}\cos(\tilde{\beta}) \sqrt{\frac{1+\K\tilde{\ell}^2}{1+\K\tilde{\ell}^2\cos^2(\tilde{\beta})}} \\
       &\circled{4}[=] \sqrt{\frac{1+\K\tilde{\ell}^2}{(1+\K\tilde{\ell}^2\sin^2(\deltatildeAux))(1+\K\tilde{\ell}^2\cos^2(\tilde{\beta}))}}. \\
   \end{aligned}
\end{align*}
    Equality $\circled{1}$ follows by the definition of $\alpha, \deltaAux, \deltatildeAux$, and $\tilde{\beta}$. In $\circled{2}$, we used trigonometric identities. In $\circled{3}$ we used \cref{lemma:deformations}.b, \eqref{eq:relationship_tangents_gradients} and \eqref{eq:relationship_cosines_gradients}. By reordering the expression, the denominator cancels out with a factor of the numerator in $\circled{4}$.

In order to work with arbitrary dimension, we note it is enough to prove it for $\n=3$, since in order to bound
\[
\frac{\innp{\frac{\nabla \F(x)}{\norm{\nabla \F(x)}}, \frac{v}{\norm{v}}}}{\innp{\frac{\nabla \f(\tilde{x})}{\norm{\nabla \f(\tilde{x})}}, \frac{\tilde{v}}{\norm{\tilde{v}}}}},
\] 
it is enough to consider the following submanifold
\[
    \mathcal{M}_{\K}' \defi  \expon{x}(\operatorname{span}\{v, \exponinv{x}(\xInit[\notilde]), \nabla \F(x)\}).
\]
    for an arbitrary vector $v\in \Tansp{x}\Mk$ and a point $x$ defined as above. The case $\n=3$ can be further reduced to the case $\n=2$ in the following way. Suppose $\mathcal{M}_{\K}'$ is a three dimensional manifold (if it is one or two dimensional there is nothing to do). Define the following orthonormal basis of $\Tansp{x}\Mk$, $\{e_1, e_2, e_3\}$ where $e_1=-\exponinv{x}(\xInit[\notilde])/\norm{\exponinv{x}(\xInit[\notilde])}$, $e_2$ is a unit vector, normal to $e_1$ such that $e_2\in\operatorname{span}\{e_1, \nabla \F(x)\}$. And $e_3$ is a vector that completes the orthonormal basis. In this basis, let $v$ be parametrized by $\norm{v}(\sin(\deltaAux), \cos(\nu)\cos(\deltaAux), \sin(\nu)\cos(\deltaAux))$, where $\deltaAux$ can be thought as the angle between the vector $v$ and its projection onto the plane $\operatorname{span}\{e_2, e_3\}$ and $\nu$ can be thought as the angle between this projection and its projection onto $e_2$. Similarly we parametrize $\tilde{v}$ by $\norm{\tilde{v}}(\sin(\deltatildeAux), \cos(\tilde{\nu})\cos(\deltatildeAux), \sin(\tilde{\nu})\cos(\deltatildeAux))$, where the base used is the analogous base to the previous one, i.e., the vectors $\{\hyperlink{def:e_i_canonical_basis}{\color{black}\tilde{e}_1}, \hyperlink{def:e_i_canonical_basis}{\color{black}\tilde{e}_2}, \hyperlink{def:e_i_canonical_basis}{\color{black}\tilde{e}_3}\}$. Taking into account that $e_2 \perp e_1$, $e_3\perp e_1$, $\hyperlink{def:e_i_canonical_basis}{\color{black}\tilde{e}_2} \perp \hyperlink{def:e_i_canonical_basis}{\color{black}\tilde{e}_1}$, $\hyperlink{def:e_i_canonical_basis}{\color{black}\tilde{e}_3}\perp \hyperlink{def:e_i_canonical_basis}{\color{black}\tilde{e}_1}$, and the fact that $e_1$ is parallel to $-\expon{x}(\xInit[\notilde])$, by the radial symmetry of the geodesic map we have that $\nu = \tilde{\nu}$. Also, by looking at the submanifold $\expon{x}(\operatorname{span}\{e_1,v\})$ and using \cref{lemma:deformations}.b we have 
\[
    \sin(\deltaAux) = \sin(\deltatildeAux)\sqrt{\frac{1+\K\tilde{\ell}^2}{1+\K\tilde{\ell}^2\sin(\deltatildeAux)}}.
\]
    If we want to compare $\innp{\nabla \F(x), v}$ with $\innp{\nabla \f(\tilde{x}), \tilde{v}}$ we should be able to just zero out the third components of $v$ and $\tilde{v}$ and work in $\n=2$. But in order to completely obtain a reduction to the two-dimensional case we studied a few paragraphs above, we would need to prove that if we call $w\defi(\sin(\deltaAux), \cos(\nu)\cos(\deltaAux), 0)$ the vector $v$ with the third component made $0$, then $\tilde{w}$ is in the same direction of the vector $\tilde{v}$, when the third component is made $0$. The norm of these two vectors will not be the same, however. Let $\tilde{w}'=(\sin(\deltatildeAux), \cos(\nu) \cos(\deltatildeAux), 0)$ be the vector $\tilde{v}$ when the third component is made $0$. Then 
\begin{equation} \label{eq:norms_of_projections_from_3d_vectors}
    \norm{w} = \norm{v} \sqrt{\sin^2(\deltaAux)+\cos^2(\deltaAux)\cos^2(\nu)} \text{ and } \norm{\tilde{w}'} = \norm{\tilde{v}}\sqrt{\sin^2(\deltatildeAux) + \cos^2(\deltatildeAux)\cos^2(\nu)}.
\end{equation}
But indeed, we claim 
    \begin{equation}\label{claim:same_direction_when_projecting}
    \tilde{w} \text{ and } \tilde{w}' \text{ have the same direction.}
\end{equation}
     This is easy to see geometrically: since we are working with a geodesic map, the submanifolds $\expon{x}(\operatorname{span}\{v, e_3\})$ and $\expon{x}(\operatorname{span}\{e_1, e_2\})$ contain $w$. Similarly the submanifolds $x+\operatorname{span}\{\tilde{v}, \hyperlink{def:e_i_canonical_basis}{\color{black}\tilde{e}_3}\}$ and $x+\operatorname{span}\{\hyperlink{def:e_i_canonical_basis}{\color{black}\tilde{e}_1}, \hyperlink{def:e_i_canonical_basis}{\color{black}\tilde{e}_2}\}$ contain $\tilde{w}'$. If the intersections of each of these pair of manifolds is a geodesic then the geodesic map must map one intersection to the other one, implying $\tilde{w}$ is proportional to $\tilde{w}'$. If the intersections are degenerate the case is trivial. Alternatively, one can prove this fact algebraically after some computations. It will be convenient for the rest of the proof so we will also include it here. If we call $\newtarget{def:auxiliary_delta_ast}{\deltaastAux}$ and $\newtarget{def:auxiliary_delta_prime}{\deltaptildeAux}$ the angles formed by, respectively, the vectors $e_2$ and $w$, and the vectors $\hyperlink{def:e_i_canonical_basis}{\color{black}\tilde{e}_2}$ and $\tilde{w}'$, then we have $\tilde{w}'$ is proportional to $\tilde{w}$ if $\deltaptildeAux=\deltaasttildeAux$, or equivalently $\deltapAux=\deltaastAux$. Using the definitions of $w$ and $\tilde{w}'$ we have
\begin{align*} 
   \begin{aligned}
       \sin(\deltaastAux) = \sin\left(\arctan\left(\frac{\sin(\deltaAux)}{\cos(\nu)\cos(\deltaAux)}\right)\right) = \frac{\tan(\deltaAux)/\cos(\nu)}{(\tan(\deltaAux)/\cos(\nu))^2+1} \\
       = \frac{\sin(\deltaAux)}{\sqrt{\sin^2(\deltaAux)+\cos^2(\nu)\cos^2(\deltaAux)}},
   \end{aligned}
\end{align*}
and analogously
\begin{align} \label{eq:aux_sine_of_angle_3d}
   \begin{aligned}
       \sin(\deltaptildeAux) = \sin\left(\arctan\left(\frac{\sin(\deltatildeAux)}{\cos(\nu)\cos(\deltatildeAux)}\right)\right) = \frac{\tan(\deltatildeAux)/\cos(\nu)}{(\tan(\deltatildeAux)/\cos(\nu))^2+1} \\
       = \frac{\sin(\deltatildeAux)}{\sqrt{\sin^2(\deltatildeAux)+\cos^2(\nu)\cos^2(\deltatildeAux)}}.
   \end{aligned}
\end{align}
Using \cref{lemma:deformations}.b for the pairs $\deltapAux$, $\deltaptildeAux$ and $\deltaastAux$, $\deltaasttildeAux$, and the equations above we obtain
\begin{align*} 
   \begin{aligned}
       \sin(\deltaastAux) = \frac{\sin(\deltatildeAux)\sqrt{\frac{1+\K\tilde{\ell}^2}{1+\K\tilde{\ell}^2\sin^2(\deltatildeAux)}}}{\sqrt{\sin^2(\deltatildeAux)\frac{1+\K\tilde{\ell}^2}{1+\K\tilde{\ell}^2\sin^2(\deltatildeAux)}+\cos^2(\nu)\frac{\cos^2(\deltatildeAux)}{1+\K\tilde{\ell}^2\sin^2(\deltatildeAux)}}} = \frac{\sin(\deltatildeAux)\sqrt{1+\K\tilde{\ell}^2}}{\sqrt{\sin^2(\deltatildeAux)(1+\K\tilde{\ell}^2)+\cos^2(\nu)\cos^2(\deltatildeAux)}},
   \end{aligned}
\end{align*}
and 
\begin{align*} 
   \begin{aligned}
       \sin(\deltapAux) = \frac{\sin(\deltatildeAux)}{\sqrt{\sin^2(\deltatildeAux)+\cos^2(\nu)\cos^2(\deltatildeAux)}}\sqrt{\frac{1+\K\tilde{\ell}^2}{1+\K\tilde{\ell}^2\left( \frac{\sin^2(\deltatildeAux)}{\sin^2(\deltatildeAux)+\cos^2(\nu)\cos^2(\deltatildeAux)}\right)}
},
   \end{aligned}
\end{align*}
The two expressions on the right hand side are equal. This implies $\sin(\deltapAux) = \sin(\deltaastAux)$. Since the angles were in the same quadrant we have $\deltapAux =\deltaastAux$.

We can now come back to the study of $\frac{\innp{\nabla \F(x), v}}{\innp{\nabla \f(\tilde{x}), \tilde{v}}}$. By  \eqref{eq:norms_of_projections_from_3d_vectors} we have
\begin{align*} 
   \begin{aligned}
   \frac{\innp{\nabla \F(x), v}}{\innp{\nabla \f(\tilde{x}), \tilde{v}}} = \frac{\norm{\nabla \F(x)}}{\norm{\nabla \f(\tilde{x})}}\frac{\norm{v}}{\norm{\tilde{v}}}\frac{\innp{\frac{\nabla \F(x)}{\norm{\nabla \F(x)}}, \frac{w}{\norm{w}}}}{\innp{\frac{\nabla \f(\tilde{x})}{\norm{\nabla \f(\tilde{x})}}, \frac{\tilde{w}}{\norm{\tilde{w}}}}} \frac{\sqrt{\sin^2(\deltaAux)+\cos^2(\deltaAux)\cos^2(\nu)}}{\sqrt{\sin^2(\deltatildeAux)+\cos^2(\deltatildeAux)\cos^2(\nu)}} \end{aligned}
\end{align*}
We now operate the last two fractions. Using \eqref{inner_product_comparison} and \eqref{eq:norms_of_projections_from_3d_vectors} we get that the product of the last two fractions above is equal to
\begin{align*} 
   \begin{aligned}
       \sqrt{\frac{1+\K\tilde{\ell}^2}{(1+\K\tilde{\ell}^2\sin^2(\deltaasttildeAux))(1+\K\tilde{\ell}^2\cos^2(\tilde{\beta}))}} \frac{\sqrt{\sin^2(\deltatildeAux)\frac{1+\K\tilde{\ell}^2}{(1+\K\tilde{\ell}^2\sin^2(\deltatildeAux))} +\cos^2(\nu)\frac{\cos^2(\deltatildeAux)}{1+\K\tilde{\ell}^2\sin(\deltatildeAux)}}}{\sin^2(\deltatildeAux)+\cos^2(\deltatildeAux)\cos^2(\nu)}
   \end{aligned}
\end{align*}
which after using \eqref{eq:aux_sine_of_angle_3d} (recall $\deltaasttildeAux = \deltaptildeAux$), and simplifying it yields
\[
\sqrt{\frac{1+\K\tilde{\ell}^2}{(1+\K\tilde{\ell}^2\sin^2(\deltatildeAux))(1+\K\tilde{\ell}^2\cos^2(\tilde{\beta}))}}.
\] 
So finally we have
\[
\frac{\innp{\nabla \F(x), v}}{\innp{\nabla \f(\tilde{x}), \tilde{v}}} = \frac{\norm{\nabla \F(x)}}{\norm{\nabla \f(\tilde{x})}}\frac{\norm{v}}{\norm{\tilde{v}}}
\sqrt{\frac{1+\K\tilde{\ell}^2}{(1+\K\tilde{\ell}^2\sin^2(\deltatildeAux))(1+\K\tilde{\ell}^2\cos^2(\tilde{\beta}))}}.
\] 
In order to bound the previous expression, we now use \cref{lemma:deformations}.c and \cref{lemma:deformations}.a, and bound $\sin^2(\deltatildeAux)$ and $\cos^2(\tilde{\beta})$ by $0$ or $1$ depending on the inequality. Recall that, by \eqref{eq:R_tilde_vs_R} we have $1/\sqrt{1+\K\tilde{\ell}^2} = \ck[\ell]$, for $\ell=\dist(x, \xInit[\notilde]) \leq \RR$. And $\tilde{\ell}=\norm{\tilde{x}}$. Let's proceed. We obtain, for $\K=-1$
\[
    \cosh^{-3}(\RR)\leq \frac{1}{\cosh^2(\ell)} \cdot 1 \cdot \frac{1}{\cosh(\ell)} \leq \frac{\innp{\nabla \F(x), v}}{\innp{\nabla \f(\tilde{x}), \tilde{v}}}  \leq \frac{1}{\cosh(\ell)} \cdot \cosh^2(\ell) \cdot \cosh(\ell) \leq \cosh^2(\RR).
\] 
and for $\K=1$ it is 
\[
    \cos^{2}(\RR)\leq \frac{1}{\cos(\ell)} \cdot \cos^2(\ell) \cdot \cos(\ell) \leq \frac{\innp{\nabla \F(x), v}}{\innp{\nabla \f(\tilde{x}), \tilde{v}}}  \leq \frac{1}{\cos^2(\ell)} \cdot 1 \cdot \frac{1}{\cos(\ell)} \leq \cos^{-3}(\RR).
\] 

The first part of \cref{prop:bounding_hyperplane} follows, for $\newtarget{def:gamma_p}{\gammap} = \cosh^{-3}(\RR)$ and $\newtarget{def:gamma_n}{\gamman} = \cosh^{-2}(\RR)$ when $\K=-1$, and $\gammap = \cos^2(\RR)$ and $\gamman = \cos^3(\RR)$ when $\K=1$.

The second part of \cref{prop:bounding_hyperplane} follows readily from the first one and g-convexity of $\F$, as in the following. It holds
\[
    \f(\tilde{x}) + \frac{1}{\gamman}\innp{\nabla \f(\tilde{x}), \tilde{y}- \tilde{x}} \circled{1}[\leq] \F(x) + \innp{\nabla \F(x), y\riemMinus x} \circled{2}[\leq] \F(y) = \f(\tilde{y}),
\] 
and
\[
    \f(\tilde{x}) + \gammap\innp{\nabla \f(\tilde{x}), \tilde{y}-\tilde{x}} \circled{3}[\leq] \F(x) + \innp{\nabla \F(x), y\riemMinus x} \circled{4}[\leq] \F(y) = \f(\tilde{y}),
\] 
where $\circled{1}$ and $\circled{3}$ hold if $\innp{\nabla \f(\tilde{x}), \tilde{y}-\tilde{x}} \leq 0$ and $\innp{\nabla \f(\tilde{x}), \tilde{y}-\tilde{x}} \geq 0$, respectively, by the first part of this theorem. Inequalities $\circled{2}$ and $\circled{4}$ hold by g-convexity of $\F$.

\end{proof}

\section[Constants depending on R and K, and comparisons]{Constants depending on {$\protect\RR$} and {$\protect\K$}, and comparisons}\label{app:constants}
We discuss the value of the constants of our algorithms in \cref{remark:constants} and discuss recent hardness results in \cref{remark:related_work_hardness_hyperbolic}. But we start by proving a relevant result that says that the condition number of an $\L$-smooth and $\mu$-strongly g-convex function $\F:\BR\to\R$ is lower bounded by a term depending on $\RR$ and $\K$, where the condition number is defined by $\L/\mu$. This is unlike in the Euclidean case, for which there are functions with condition number $1$.

In particular, we show that the function $x\mapsto \frac{1}{2}\dist(x,\xInit[\notilde])^2$ has minimum condition number on $\BR$, and is $(\RR\sqrt{\abs{\K}}\cotk[\RR])^{-\sign(\K)}$, where $\newtarget{def:special_cotangent}{\cotk[\RR]}$ is the special cotangent that is $\cot(\sqrt{\abs{\K}}\RR)$ if $\K>0$ and $\coth(\sqrt{\abs{\K}}\RR)$ if $\K<0$. And $\newtarget{def:sign_of_a_number}{\sign}(\K)$ is $\K/\abs{\K}$ for $\K\neq 0$. The fact about the condition number of $\frac{1}{2}\dist(x,\xInit[\notilde])^2$ can be obtained from the proof of \cref{strong_convexity_and_smoothness_of_ell_2}, and actually the fact per se as a comparison geometry theorem that uses that the inequality there is satisfied with equality in the constant curvature case. However, we recover the computation of this condition number while proving the proposition. 

\begin{proposition}\label{prop:lower_bound_on_condition_number}
    Let $F:\Mk\to\R$ be an $\L$-smooth and $\mu$-strongly convex function on $\BR\subset\Mk$. Assume $F$ is twice differentiable with continuous Hessian. Then, the condition number $\L/\mu$ of $F$ on $\BR$ is at least the condition number of the function $\frac{1}{2}\dist(x,\xInit[\notilde])^2$ on $\BR$.
\end{proposition}

\begin{proof}
    As we have done before, we can assume $\K\in\{1,-1\}$ because the other cases can be reduced to this one by a rescaling, cf. \cref{remark:rescaling_of_K}. Recall that by definition of $\Mk$ and $\BR$, for $\K>0$, we have that $\RR<\pi/2\sqrt{\K}$.

    We start by noting that given $F$, we can obtain another function $G$ whose condition number is at most the one of $F$ and such that it is symmetric with respect to every rotation whose axis goes through $\xInit[\notilde]$. Formally, $G = G\circ \exp_{\xInit[\notilde]} \circ\ \sigma \circ \exp_{\xInit[\notilde]}^{-1}$ for a rotation $\sigma\in \operatorname{SO}(\n)$. Equivalently, the function $G(x)$ depends on $\norm{\exp_{\xInit[\notilde]}^{-1}(x)}$ only. Indeed, an average of $F$ and itself after performing an arbitrary rotation $\sigma$, that is \@ $(F+F\circ \exp_{\xInit[\notilde]} \circ\ \sigma \circ \exp_{\xInit[\notilde]}^{-1})/2$, has a condition number that is at most the condition number of $F$. This is due to the Hessian being linear and its maximum and minimum eigenvalues over the domain determining the condition number. That is, the smoothness constant can only decrease or stay the same after performing the average. It would only be the same if, at some point, the Hessian matrices of each of the two added functions both have the same eigenvector with maximum eigenvalue and it equals the smoothness constant. The argument for the minimum is analogous. This argument extends to the case in which we integrate the function, pointwise, over $\operatorname{SO}(\n)$ after applying a rotation. That is, defining $g(x) = \int_{SO(\n)} F \circ \exp_{\xInit[\notilde]} \circ\ \sigma \circ \exp_{\xInit[\notilde]}^{-1}(x) d\sigma$ we obtain a symmetric function with condition number that is at most the condition number of $F$. So without loss of generality we can solely study symmetric functions $G$ and in fact, due to the symmetries we do not lose generality if we work in dimension $\n=2$. 

    Denote $y_x = \norm{\exp_{\xInit[\notilde]}^{-1}(x)} \in \R$. We will express the condition number of $G$ by using the function $g :\R \to \R$, defined as $g(y_x) = G(x)$ for any point $x \in \Mk$. Note the function is well defined by the symmetry property on $G$. A basis formed by (two) eigenvectors of $\nabla^2 G(x)$ can be chosen to have vectors in the direction of $\exp_{x}^{-1}(\xInit[\notilde])$ and its normal. Indeed, either every vector is an eigenvector associated to the same eigenvalue, which satisfies the above, or by the symmetry of $\nabla^2 G(x)$, there exists a base $\{v_1,v_2\}$ of orthonormal eigenvectors, associated with different eigenvalues $\lambda_1 > \lambda_2$. By the symmetry of $G$ we have that $\lambda_1 = v_1^\top \nabla^2 G(x) v_1 = v_1'^{\top} \nabla^2 G(x) v_1'$, where $v_1'$ is the symmetric vector to $v_1$ with respect to $\exp_{x}^{-1}(\xInit[\notilde])$. However, since $\lambda_1 \neq \lambda_2$ then the only unit vectors $v$ that can satisfy $\lambda_1 = v^\top \nabla^2 G(x) v$ are $\pm v_1$, so $v_1=v_1'$ and therefore $v_1$ and $v_2$ can be taken to be in the direction of $\exp_{x}^{-1}(\xInit[\notilde])$ and its normal. Consequently, one eigenvalue of $\nabla^2 G(x)$ is $g''(y_x)$. We can compute the other eigenvalue by using the non-Euclidean cosine theorem, cf. \cref{thm:law_of_sines}. In order to do this, first note that $\nabla G(x)$ must be in the direction of $\exp_{x}^{-1}(\xInit[\notilde])$ by the symmetry of $G$ and it must be $\norm{\nabla G(x)} =  g'(y_x)$. Now given $x\in\M$ and small enough $\eta\in\R$, we consider a right geodesic triangle with vertices $\xInit[\notilde]$, $x$ and $z_\eta$, where $z_\eta = \exp_x(\eta v_2)$ for $v_2$ defined above. Recall it is a unit vector that is normal to $\exp_x^{-1}(\xInit[\notilde])$ and it is an eigenvector of $\nabla^2 G(x)$. The definition of $z_\eta$ implies that the angle between $\exp_x^{-1}(\xInit[\notilde])$ and $\exp_x^{-1}(x_2)$ is $\pi/2$ and $\dist(x,z_\eta) = \eta$. Let $\alpha(\eta)$ be the angle between $\exp_{z_\eta}^{-1}(\xInit[\notilde])$ and $\exp_{z_\eta}^{-1}(x)$. Since we are only interested about the eigenvalue of $\nabla^2 G(x)$ associated to the eigenvector $v_2$ we can project $\nabla G(z_\eta)$ onto $\exp_{z_\eta}^{-1}(x)$, which has norm $\norm{\nabla G(z_\eta) \cos(\alpha(\eta))}$. We compute the eigenvalue as
\begin{align*} 
 \begin{aligned}
     \lim_{\eta\to 0} \frac{\norm{\nabla G(z_{\eta})}\cos(\alpha(\eta))}{\eta} &= \norm{\nabla G(x)}\lim_{\eta\to 0} \frac{\cos(\alpha(\eta))}{\eta} \\
     & \circled{1}[=] g'(y_x) \lim_{\eta\to 0} \frac{\ck[\dist(x_0,x)]-\ck[\eta]\ck[\dist(x_0,z_\eta)]}{\K\sk[\dist(x_0,z_\eta)] \eta \sk[\eta]} \\
     & \circled{2}[=] g'(y_x) \lim_{\eta\to 0}\frac{\ck[\dist(x_0,x)](1-\ck[\eta][2])/\K}{\sk[\dist(x_0,z_\eta)]\eta\sk[\eta]}  \\
     & \circled{3}[=] g'(y_x)\cotk[\dist(x_0,x)] = g'(y_x)\cotk[y_x].
  \end{aligned}
\end{align*}
    Above, $\circled{1}$ uses the cosine theorem, cf. \cref{thm:law_of_sines}, applied as
    \[
        \ck[\dist(x_0,x)] = \ck[\dist(x,z_\eta)]\ck[\dist(x_0,z_\eta)] + K\cos(\alpha(\eta))\sk[\dist(x_0,z_\eta)]\sk[\dist(x,z_\eta)].
    \] 
    Recall that we have $\eta = \dist(x, z_{\eta})$ by definition. Equality $\circled{2}$ uses the cosine theorem again, with a different ordering of the sides so we obtain
    \[
    \ck[\dist(x_0,z_\eta)] = \ck[\dist(x_0,x)]\ck[\dist(x,z_\eta)],
    \] 
    by using the right angle of the geodesic triangle. Finally $\circled{3}$ simplifies some terms, since $(1-\ck[\eta][2])/\K = \sk[\eta][2]$ and uses that $\dist(x_0,z_\eta)$ and $\sk[\eta]/\eta$ tend to $\dist(x_0,x)$ and $1$, respectively, when $\eta\to 0$. We conclude that the condition number of $G$ is
    \begin{equation}\label{eq:condition_number_riemannian_simplified}
        \kappa_G = \frac{\max_{y\in[0,\RR]}\{ g''(y), g'(y)\cotk[y] \}}{\min_{y\in[0,\RR]}\{ g''(y), g'(y)\cotk[y] \}}.
    \end{equation}
    We only need to prove that for any twice differentiable function $g: [0,\RR]\to\R$ with continuous second derivative, the quotient above is at least the value of the quotient that we obtain for $g(y) = y^2/2$, which is $(\RR\cotk[\RR])^{-\K}$. This is computed by noticing that, for that choice of $g$, we have that $g''(y)=1$, that if $\K=1$ then $g'(y)\cotk[y] \leq 1$ and it reaches its minimum at $y=\RR$. If $\K=-1$ then $g'(y)\cotk[y] \geq 1$ and it is maximum at $y=\RR$. Note this implies that the condition number of $\frac{1}{2}\dist(\xInit[\notilde], x)^2$ on $\BR$ is $(\RR\cotk[\RR])^{-\K}$, as it was advanced before.

    Given $g$, let $a,b$ be tight constants such that $g''(y) \in [a,b]$ for $y \in [0,\RR]$. Such constants must exist since $g''$ is a continuous function defined on a compact. We have ${g'(y) \leq by}$, since by the symmetry and differentiability of $G$ it must be $g'(0)=0$. We obtain a lower bound on $\kappa_G$ if we lower bound the numerator of \eqref{eq:condition_number_riemannian_simplified} by $\max_{y\in[0,\RR]}\{g''(y)\}=b$ and if we upper bound the denominator by $g'(\RR)\cotk[\RR]$. We obtain
    \[
        \kappa_G \geq \frac{b}{g'(\RR)\cotk[\RR]} \geq \frac{b}{Rb \cotk[\RR]} = \frac{1}{\RR\cotk[\RR]}.
    \] 
    Similarly, if we lower bound the denominator of \eqref{eq:condition_number_riemannian_simplified} by $aR\cotk[\RR] \leq \max_{y\in[0,\RR]}\{g'(y)\cotk[y]\}$ and upper bound the denominator by $a=\min_{y\in[0,\RR]}\{g''(y)\}$ we obtain
    \[
        \kappa_G \geq \frac{aR\cotk[\RR]}{a}= \RR\cotk[\RR].
    \] 
    For each case $\K\in\{1, -1\}$, there is only one of the lower bounds above such that its right hand side is greater than $1$ and it precisely matches the value of the condition number of $\frac{1}{2}\dist(\xInit[\notilde], x)^2$ we computed above.
\end{proof}

\begin{remark}\label{remark:constants} The previous proposition intuitively suggests that it could be unavoidable to have some particular constants depending on $\RR$ in the rates of any optimization algorithm. For starters, optimizing a g-convex function by adding a strongly g-convex regularizer and optimizing the resulting strongly g-convex problem would entail rates containing a factor depending on the condition number of the regularizer, which the proposition proves it is at least the value $(\RR\sqrt{\abs{\K}}\cotk[\RR])^{-\sign(\K)}$. This implies that in the case of positive curvature $\K=1$, a $\mu$-strongly g-convex and $\L$-smooth function defined on the ball $\BR$ must have condition number that is at least $\tan(\RR)/\RR \in [\frac{2}{\pi\cos(\RR)}, \frac{1}{\cos(\RR)}]$. This grows fast with $\RR$, but it is only natural if one takes into account that no strongly g-convex function exists if $\RR\geq \frac{\pi}{2}$, due to the space containing a full geodesic circle (so the constant function is the only g-convex function in this domain). Optimization in manifolds of positive curvature only makes sense in spaces of low diameter.

    The classical domain of application of accelerated methods for strongly convex functions consists of functions with large condition number $\kappa$, due to the $\sqrt{\kappa}$-dependence of the rates. For $\K=1$, the constants of our algorithm $1/\gammap=\cos^{-2}(\RR)$ and $1/\gamman=\cos^{-3}(\RR)$ (we also have the constant $\sqrt{44\max\{1, \RR^2\}}$ coming from $\Ltilde$) might seem large but they are a small polynomial of the minimum attainable condition number. If the condition number is large or, in its limit to infinity, whenever the function is g-convex, then acceleration is beneficial. For the case $\K=-1$ the previous proposition shows that the minimum condition number is $\RR\tanh(\RR)$. In this case, our constants are $1/\gammap=\cosh^{3}(\RR)$ and $1/\gamman=\cosh^{2}(\RR)$, and a constant of a similar nature coming from $\tilde{L}$ (cf, \hyperlink{proof:lemma:smoothness_of_transformed_function}{proof of \cref{lemma:smoothness_of_transformed_function}}), which do not present an analogous dependency with respect to the minimum attainable condition number as in the previous case. This exponential dependence could be due to the exponential volume that a ball contains in the hyperbolic space. Studying if these constants are necessary for a global full accelerated method is interesting open problem and future direction of research. Regardless, the essence of our results for $\mu$-strongly g-convex functions is that we can optimize at a full accelerated rate \textit{globally} as opposed to essentially fully accelerating in a small neighborhood of radius $O((\mu/\L)^{3/4})$ around the minimizer (this is explicit in \citep{zhang2018towards} and implicit in \citep{ahn2020nesterov} since the rates of \AGD{} are nearly achieved only after a number of steps that is what \RGD{} needs to reach the neighborhood). Note that, additionally, we can achieve acceleration in the g-convex case, which was not possible before. In any case, we note that in machine learning applications, it has been observed that the iterates do not get far from initialization \citep{nagarajan2019generalization}, especially in overparametrized models. Consequently, in such regime, $\RR$ being a small constant is not a strong assumption and the constants of our algorithms do not become significant. 
\end{remark}

In the sequel, we comment on the work of \citet{hamilton2021no, criscitiello2021negative}, that show a hardness result in this direction. Our intuition is that, due to the geometry, it is necessary to have an additive and/or multiplicative constant depending on $\RR$ on the optimal rates of convergence, similarly to the multiplicative constant $\RR$ that one has in the lower bound for the class of smooth and convex functions in the Euclidean space. And for \textit{easy} strongly g-convex functions (low condition number), this hardness could dominate convergence. However, when the condition number is large, which is the traditional regime of application of accelerated methods, or in its limit to infinity, that is in the case of g-convexity, acceleration becomes again a very useful tool.

\subsection{Comment on hardness results} \label{app:hardness_results}

\begin{remark}\label{remark:related_work_hardness_hyperbolic}
    After this work was publicly available, two lower bounds have been constructed, \citep{hamilton2021no, criscitiello2021negative}. For the first one \citep{hamilton2021no}, in the noisy setting, in the hyperbolic plane, the authors claim ``{\emph{[to have] dashed these hopes [of having Nesterov-like accelerated algorithms] by showing that acceleration is impossible even in the simplest of settings where we want to minimize a smooth and strongly geodesically convex function over the hyperbolic plane}}''. We argue here that this is not the case.

    \citet{hamilton2021no} essentially argue that, in their setting with a noisy oracle in the hyperbolic plane, one needs $\gtrsim \RR/\log(\RR)$ noisy queries to the gradient or function value for optimizing functions of the form $\dist(x, \xast[\notilde])^2$, while their condition numbers are $\L/\mu \approx \RR$ so obtaining rates $\lesssim \sqrt{\L/\mu}$ is impossible in general. But it does not preclude to have an algorithm with rates that are, for instance, $\lesssim \RR+\sqrt{\L/\mu}\log(1/\epsilon)$. Or a similar expression that involves some other additive or multiplicative constants depending on $\RR$. In fact, they are able to show that ``{\normalfont acceleration is impossible even in the simplest of settings}'' precisely because they study the simplest of settings! That is, they show there is some hardness depending on the geometry. In particular, when the condition number is low this hardness can dominate the convergence. For instance, for rates $\RR+\sqrt{\L/\mu}\log(1/\epsilon)$ the $\RR$ can dominate convergence unless $\L/\mu \geq \RR^2$ or $\epsilon$ is small enough. The lower bound does not mean that acceleration is doomed to fail. In fact, the problems for which acceleration gets the most improvement are ill-conditioned problems and for those one would expect to still have acceleration in their noisy setting. In particular, acceleration is of importance when $\L/\mu$ is large or in the limit to infinity, that is, when the function is g-convex.

    We note that \citet{hamilton2021no} independently proved a similar result as our \cref{prop:lower_bound_on_condition_number}, limited to the hyperbolic plane. In particular they show that the condition number for an $\L$-smooth $\mu$-strongly g-convex function $\F$ defined on the hyperbolic disk of curvature ${\K=-1}$ must be $\kappa_F = \L/\mu \geq \Omega(\RR)$, which is similar to the precise result that we found that had optimal constant $\RR\cot(\RR)$. 

    The lower bound of \citet{criscitiello2021negative} is a generalization of the previous one, in which they show that the $\widetilde{\Omega}(\RR)$ lower bound still holds for optimization with a deterministic first-order oracle, in hyperbolic spaces and some other more general negatively curved manifolds. Similarly, this lower bound applies when the condition number is small enough ($\approx \RR$), close to the lower bound on the condition number that we proved in \cref{prop:lower_bound_on_condition_number}. They also provide a lower bound in the case of smooth and only g-convex functions. Their lower bound requires to have $\RR$ growing as $\widetilde{\Theta}(\frac{1}{\epsilon})$ and in that case they show that \RGD{} is optimal. This result is in the same spirit as the previous one: The geometry causes some hardness, and if we allow to grow the feasible space enough, this hardness can dominate convergence. On the other hand, this does not mean that one cannot accelerate when, for instance, $\RR$ is fixed and $\epsilon$ is small enough.
\end{remark}

\subsection[Comment on the rates of eventually accelerated algorithms (AS20)]{Comment on the rates of eventually accelerated algorithms \citep{ahn2020nesterov}}

\begin{remark}\label{remark:comparison_rates_riemannian}
    The local algorithm in \citep{zhang2018towards} requires starting $O((\L/\mu)^{-3/4})$ close to the optimum and it finds an $\epsilon$-minimizer in $\bigo{\sqrt{\L/\mu}\log(1/\epsilon)}$. On the other hand \RGD{} has a convergence rate of $\bigo{\L/\mu \log(1/\epsilon)}$. Hence, we could run both algorithms in parallel and restart them every few iterations from the best of the two points that both algorithms yielded. In that case we would obtain the convergence rate $\bigotilde{\L/\mu + \sqrt{\L/\mu}\log (\mu/\epsilon)}$. Indeed, note that we would just compute twice as many gradients as if we run \RGD{} but we perform as well as if we first run \RGD{} until it gets into the desired neighborhood and then we run the local accelerated algorithm. And by $\mu$-strong g-convexity we can guarantee we are $\mu \bar{\epsilon}^2/2$-close to a minimizer if we are at an $\bar{\epsilon}$-minimizer so if we set $\bar{\epsilon}$ so that $\mu \bar{\epsilon}^2/2 = O((\L/\mu)^{-3/4})$ and run \RGD{} we reach the neighborhood after $O((\L/\mu)\log(\L/\mu))$ iterations.

    We note that this mix of \RGD{} and the local algorithm in \citep{zhang2018towards} enjoys the same worse case guarantee of \citep{ahn2020nesterov}. This latter work is a generalization of \citep{zhang2018towards} that \textit{eventually accelerates}. The proofs of this paper reveal that in order for their bound to reach accelerated rates the algorithm needs as much time as \RGD{} takes to reach the accelerating neighborhood of \citep{zhang2018towards}. Indeed, they can guarantee that for their iterates $y_t$, their algorithm converges at an accelerated rate $f(y_t) - f(x^\ast) \leq O(f(y_{t-1})-f(x^\ast))(1-\sqrt{\mu/\L}))$ when $t=\Omega^\ast(\frac{1}{\log(1/\lambda)}) = \Omega^\ast(\L/\mu)$, where $\lambda = \Omega(1-\mu/\L)$. A summary of rates is presented in \cref{table:comparisons:riemannian}, including this fact.
\end{remark}

\end{document}